\documentclass[12pt]{amsart}
\usepackage[textwidth=16cm, textheight=22cm]{geometry}
\usepackage{hyperref}
\newcommand{\arxiv}[1]{\href{http://www.arXiv.org/abs/#1}{arXiv:#1}}
\usepackage{graphicx}
\usepackage{latexsym}

\newtheorem{theorem}{Theorem}[section]
\newtheorem{lemma}[theorem]{Lemma}

\newtheorem{proposition}[theorem]{Proposition}
\newtheorem{maintheorem}{Main Theorem}

\newtheorem{corollary}[theorem]{Corollary}
\theoremstyle{remark}

\def\R{\mathbb{R}}

\def\boole{\mathbb{B}}

\def\Rp{\R_+}
\def\Rpn{\Rp^n}
\def\Rpnn{\Rp^{n\times n}}
\def\Tc{T_c}

\def\critindices{N_c}

\def\supp{\operatorname{supp}}
\def\spann{\operatorname{span}}
\def\core{\operatorname{core}}
\def\diag{\operatorname{diag}}
\def\maxmu{\Tilde{\mu}}

\def\goodclass{{\mathcal S}}
\def\crit{{\mathcal C}}
\def\digr{{\mathcal G}}
\def\itoj{i\;-\;j}
\def\jtoi{j\;-\;i}
\def\mutonu{\mu\;-\;\nu}

\def\coremax{\core_{\oplus}}
\def\corenon{\core_+}
\def\vmax{V_{\oplus}}
\def\vnon{V_+}
\def\spanmax{\spann_{\oplus}}
\def\spannon{\spann_+}
\def\Lmax{\Lambda_{\oplus}}
\def\Lnon{\Lambda_+}
\def\rhomax{\rho^{\oplus}}
\def\rhonon{\rho^+}
\def\mrho{M_{\rho}}
\def\amrho{A_{\rho}}
\def\amrhomrho{A_{\mrho\mrho}}


\begin{document}

\title{Two cores of a nonnegative matrix}
\thanks{This research was supported by EPSRC grant RRAH15735.
Serge\u{\i} Sergeev also acknowledges the support of RFBR-CNRS grant
11-0193106 and RFBR grant 12-01-00886. Bit-Shun Tam acknowledges the
support of National Science Council of the Republic of China
(Project No. NSC 101-2115-M-032-007)}

\author{Peter Butkovi{\v{c}}}
\address{Peter Butkovi{\v{c}}, University of Birmingham,
School of Mathematics, Watson Building, Edgbaston B15 2TT, UK}
\email{P.Butkovic@bham.ac.uk}

\author{Hans Schneider}
\address{Hans Schneider, University of Wisconsin-Madison,
Department of Mathematics, 480 Lincoln Drive, Madison WI 53706-1313,
USA} \email{hans@math.wisc.edu}

\author{Serge\u{\i} Sergeev}
\address{Serge\u{\i} Sergeev, University of Birmingham,
School of Mathematics, Watson Building, Edgbaston B15 2TT, UK}
\email{sergeevs@maths.bham.ac.uk}

\author{Bit-Shun Tam}
\address{Bit-Shun Tam, Tamkang University, Department of Mathematics, Tamsui, Taiwan 25137,
R.O.C. }
\email{bsm01@mail.tku.edu.tw}

\begin{abstract}
We prove that the sequence of eigencones (i.e., cones of nonnegative
eigenvectors) of positive powers $A^k$ of a nonnegative square
matrix $A$  is periodic both in max algebra and in nonnegative
linear algebra. Using an argument of Pullman, we also show that the
Minkowski sum of the eigencones of powers of $A$ is equal to the
core of $A$ defined as the intersection of nonnegative column spans
of matrix powers, also in max algebra. Based on this, we describe
the set of extremal rays of the  core.

The spectral theory of matrix powers and the theory of matrix core
is developed in max algebra and in nonnegative linear algebra
simultaneously wherever possible, in order to unify and compare both
versions of the same theory.

{\it{Keywords:}} Max algebra, nonnegative matrix theory,
Perron-Frobenius theory, matrix power, eigenspace, core.
\vskip0.1cm {\it{AMS Classification:}} 15A80, 15A18, 15A03,15B48

\end{abstract}

\maketitle

\section{Introduction}

The nonnegative reals $\Rp$ under the usual multiplication give rise to two
semirings with addition defined in two ways: first with the usual addition, and
second where the role of addition is played by maximum.
Thus we consider
the properties of nonnegative matrices with entries in two semirings, the semiring of nonnegative numbers with usual addition
and multiplication called ``{\bf nonnegative algebra}'',
and the semiring called ``{\bf max(-times) algebra}''.

Our chief object of study is the {\bf core} of a nonnegative matrix
$A$.  This concept was introduced  by Pullman in ~\cite{Pul-71}, and
is defined as the intersection of the cones generated by the columns
of matrix powers $A^k$. Pullman provided a geometric approach to the
Perron-Frobenius theory of nonnegative matrices based on the
properties of the core.  He  investigated the action of a matrix on
its core showing that it is bijective and that the extremal rays of
the core can be partitioned into periodic orbits. In other words,
extremal rays of the core of $A$ are nonnegative eigenvectors of the
powers of $A$ (associated with positive eigenvalues).

One of the main purposes of the present paper is to extend Pullman's
core to max algebra, thereby investigating the periodic sequence of
eigencones of max-algebraic matrix powers.  However, following the
line of~\cite{BSS,BSS-zeq,KSS}, we develop the theory in max algebra
and nonnegative algebra simultaneously, in order to emphasize common
features as well as differences, to provide general (simultaneous)
proofs where this is possible. We do not aim to obtain new results,
relative to~\cite{Pul-71,TS-94}, on the usual core of a nonnegative
matrix. However, our unifying approach leads in
some cases (e.g., Theorem~\ref{t:tam-schneider} (iii)) to new and
more elementary proofs than those given previously. Our motivation
is closely related to the Litvinov-Maslov correspondence
principle~\cite{LM-98}, viewing the idempotent mathematics (in
particular, max algebra) as a ``shadow'' of the ``traditional''
mathematics over real and complex fields.

\if{
Pullman's core can be also seen as closely related to the
limits of powers of nonnegative matrices. However it is a different
concept. Consider the simple example
$$ \begin{pmatrix}
1 & 0 \\
0 & 0.5
\end{pmatrix}.
$$
Then, for any nonnegative $x$, $A^kx$ will tend to a multiple of $(
1,\  0)^T$ while the core of $A$ is the entire nonnegative orthant
${\mathbb R}^2_+$. }\fi

To the authors' knowledge, the core of a nonnegative matrix has not
received much attention in linear algebra. However, a more detailed
study has been carried out by Tam and Schneider~\cite{TS-94}, who
extended the concept of core to linear mappings preserving a proper
cone. The case when the core is a polyhedral (i.e., finitely
generated) cone was examined in detail in~\cite[Section 3]{TS-94},
and the results were applied to study the case of nonnegative matrix
in~\cite[Section 4]{TS-94}. This work has found further applications
in the theory of dynamic systems acting on the path space of a
stationary Bratteli diagram. In particular,
Bezuglyi~et~al.~\cite{BKMS} describe and exploit a natural
correspondence between ergodic measures and extremals of the core of
the incidence matrix of such a diagram.

On the other hand, there is much more literature on the related but
distinct question of the  limiting sets of  homogeneous and
non-homogeneous Markov chains in nonnegative algebra; see the books
by Hartfiel~\cite{Har:02} and Seneta~\cite{Sen:81} and, e.g., the
works of Chi~\cite{Chi-96} and Sierksma~\cite{Sie-99}. In max
algebra, see the results on the ultimate column span of matrix
powers for irreducible matrices (\cite[Theorem 8.3.11]{But:10},
\cite{Ser-09}), and by Merlet~\cite{Mer-10} on the invariant max
cone of non-homogeneous matrix products.

The theory of the core relies on the behaviour of matrix powers. In
the nonnegative algebra, recall the works of
Friedland-Schneider~\cite{FS-80} and Rothblum-Whittle~\cite{RW-82}
(on the role of distinguished classes which we call ``spectral
classes'', algebraic and geometric growth rates, and various
applications). The theory of max-algebraic matrix powers is similar.
However, the max-algebraic powers have a well-defined periodic
ultimate behaviour starting after sufficiently large time. This
ultimate behaviour has been known since the work of
Cuninghame-Green~\cite[Theorem 27-9]{CG:79},
Cohen~et~al.~\cite{CDQV-83} (irreducible case), and is described in
greater generality and detail, e.g., by Akian, Gaubert and
Walsh~\cite{AGW-05}, Gavalec~\cite{Gav:04}, De Schutter~\cite{BdS},
and the authors~\cite{But:10,Ser-11<attr>, SS-11} of the present
paper. In particular, the Cyclicity Theorem of
Cohen~et~al.~\cite{BCOQ,But:10,CDQV-83,HOW:05}) implies that
extremals of the core split into periodic orbits for any irreducible
matrix (see Subsection 4.2 below)\footnote{
In fact, many of the cited works and monographs
like~\cite{BCOQ,But:10,Gav:04,HOW:05} are written in the setting of
{\bf max-plus algebra}. However, this algebra is isomorphic to the
max algebra considered here, so the results can be readily
translated to the present (max-times) setting.}.

Some results on the eigenvectors of max-algebraic matrix powers have
been obtained by Butkovi\v{c} and
Cuninghame-Green~\cite{But:10,CGB-07}. The present paper also aims
to extend and complete the research initiated in that work.

This paper is organized as follows.
In Section~\ref{s:prel} we introduce the basics of irreducible and reducible Perron-Frobenius theory in max
algebra and in nonnegative linear algebra.
In Section~\ref{s:key} we formulate the two key results of this
paper. The first key result is Main Theorem~\ref{t:core} stating
that the matrix core equals to the Minkowski sum of the eigencones
of matrix powers (that is, for each positive integer $k$, we take
the sum of the eigencones associated with $A^k$, and then we sum
over all $k$). The second key result is Main
Theorem~\ref{t:periodicity} stating that the sequence of eigencones
of matrix powers is periodic and defining the period. This section
also contains a table of notations used throughout the paper.
Section~\ref{s:core} is devoted to the proof of Main
Theorem~\ref{t:core}, taking in ``credit'' the result of Main
Theorem~\ref{t:periodicity} (whose proof is deferred to the end of
the paper).
In Section~\ref{s:sameaccess} we explain the relation between
spectral classes of different matrix powers, and how the eigencones
associated with general eigenvalues can be reduced to the case of
the greatest eigenvalue, see in particular
Theorems~\ref{t:samespectrum} and~\ref{t:reduction}.
In Section~\ref{s:extremals} we describe extremals of the core in
both algebras extending~\cite[Theorem~4.7]{TS-94}, see
Theorem~\ref{t:tam-schneider}. Prior to this result we formulate the
Frobenius-Victory Theorems~\ref{t:FVnonneg} and~\ref{t:FVmaxalg}
giving a parallel description of extremals of eigencones in both
algebras. In Section~\ref{s:eigencones}, our first goal is to show
that the sequence of eigencones of matrix powers in max algebra is
periodic, comparing this result with the case of nonnegative matrix
algebra, see Theorem~\ref{t:girls}.
Then we study the inclusion relation on eigencones and deduce Main
Theorem~\ref{t:periodicity}.
The key results are illustrated by a
pair of examples in Section~\ref{s:examples}.

\section{Preliminaries}
\label{s:prel}

\subsection{Nonnegative matrices and associated graphs}
\label{ss:nonneg}

In this paper we are concerned only with nonnegative eigenvalues and
nonnegative eigenvectors of a nonnegative matrix. In order to bring
our terminology into line with the corresponding theory for max
algebra we use the terms eigenvalue and  eigenvector in a
restrictive fashion appropriate to our semiring point of view. Thus
we shall call $\rho$ an {\em eigenvalue} of a nonnegative matrix $A$
(only) if there is a nonnegative eigenvector $x$ of $A$ for $\rho$.
Further $x$ will be called an {\em eigenvector} (only) if it is
nonnegative. (In the literature $\rho$ is called a distinguished
eigenvalue and $x$ a distinguished eigenvector of $A$.) For
$x\in\Rpn$, the {\em support} of $x$, denoted by $\supp(x)$, is the
set of indices where $x_i>0$.

In this paper we are led to state the familiar Perron-Frobenius theorem in slightly
unusual terms: An irreducible nonnegative
matrix $A$ has a unique eigenvalue denoted by $\rhonon(A)$, which is positive (unless $A$ is the $1\times 1$ matrix $0$). Further, the eigenvector $x$ associated with  $\rhonon(A)$ is essentially unique, that is all eigenvectors are multiples of $x$.
The nonnegative multiples of $x$ constitute the cone of eigenvectors (in the above sense)
$\vnon(A,\rhonon(A))$ associated with $\rhonon(A)$.

A general (reducible) matrix $A\in\Rpnn$ may have several nonnegative eigenvalues with associated cones of nonnegative eigenvectors
({\em eigencones}), and $\rhonon(A)$ will denote the biggest such eigenvalue, in general. Eigenvalue
$\rhonon(A)$ is also called the {\em principal eigenvalue}, and $\vnon(A,\rhonon(A))$ is called the
{\em principal eigencone}.

Recall that a subset $V\subseteq\Rpn$ is called a (convex) cone if 1) $\alpha v\in V$ for
all $v\in V$ and $\alpha\in\Rp$, 2) $u+v\in V$ for $u,v\in V$. Note that cones in the nonnegative orthant can be
considered as ``subspaces'', with respect to the semiring of nonnegative numbers
(with usual addition and multiplication). In this vein,
a cone $V$ is said to be {\em generated} by $S\subseteq\Rpn$ if each $v\in V$ can be represented
as a nonnegative combination $v=\sum_{x\in S} \alpha_x x$ where only finitely many
$\alpha_x\in\Rp$ are different from
zero. When $V$ is generated (we also say ``spanned'') by $S$,
this is denoted $V=\spannon(S)$.   A vector $z$ in a cone $V$ is called an {\em extremal}, if $z=u+v$ and $u,v\in V$ imply $z=\alpha_u u=\alpha_v v$
for some scalars $\alpha_u$ and $\alpha_v$. Any closed cone in $\Rpn$ is generated by its extremals; in particular,
this holds for any finitely generated cone.

Let us recall some basic notions related to (ir)reducibility, which we use also
in max algebra. With a matrix $A=(a_{ij})\in\Rp^{n\times n}$
we associate a weighted (di)graph $\digr(A)$ with the set of nodes
$N=\{1,\dots,n\}$ and set of edges~$E\subseteq N\times N$ containing a pair~$(i,j)$ if and only
if~$a_{ij}\neq 0$; the weight of an edge~$(i,j)\in E$ is defined to be~$w(i,j):=a_{ij}$.
A graph with just one node and no
edges will be called {\em trivial}. A graph with at least one node and at least one edge
will be called {\em nontrivial}.

A path $P$ in $\digr(A)$ consisting\footnote{In our terminology, a
path can visit some nodes more than once.} of the edges
$(i_0,i_1),(i_1,i_2),\ldots,(i_{t-1},i_t)$ has {\em length}
$l(P):=t$ and {\em weight} $w(P):=w(i_0,i_1) \cdot w(i_1,i_2) \cdots
w(i_{t-1},i_t)$, and is called an $\itoj$ path if $i_0=i$ and
$i_t=j$. $P$ is called a {\em cycle} if $i_0=i_t$. $P$ is an {\em
elementary cycle}, if, further, $i_k\neq i_l$ for all
$k,l\in\{1,\ldots,t-1\}$.

Recall that $A=\left( a_{ij}\right) \in \Rp^{n\times n}$ is
irreducible if $\digr(A)$ is trivial or for any $i,j\in \{1,\ldots, n\}$ there is an
$\itoj$ path. Otherwise $A$ is reducible.

Notation $A^{\times k}$ will stand for the usual $k$th power of a nonnegative matrix.

\subsection{Max algebra}
\label{ss:max}

By max algebra we understand the set of nonnegative numbers $\Rp$
where the role of addition is played by taking maximum of two
numbers: $a\oplus b:=\max(a,b)$, and the multiplication is as in the
usual arithmetics. This is carried over to matrices and vectors like
in the usual linear algebra so that for two matrices $A=(a_{ij})$
and $B=(b_{ij})$ of appropriate sizes, $(A\oplus
B)_{ij}=a_{ij}\oplus b_{ij}$ and $(A\otimes B)_{ij}=\bigoplus_k
a_{ik}b_{kj}$.
Notation $A^{\otimes k}$ will stand for the $k$th max-algebraic power.

In max algebra, we have the following analogue of a convex cone.  A set
$V\subseteq\Rpn$ is called a {\em max cone} if 1) $\alpha v\in V$
for all $v\in V$ and $\alpha\in\Rp$, 2) $u\oplus v\in V$ for $u,v\in
V$. Max cones are also known as idempotent
semimodules~\cite{KM:97,LM-98}. A max cone $V$ is said to be {\em
generated} by $S\subseteq\Rpn$ if each $v\in V$ can be represented
as a max combination $v=\bigoplus_{x\in S} \alpha_x x$ where only
finitely many (nonnegative) $\alpha_x$ are different from zero. When
$V$ is generated (we also say ``spanned'') by $S$, this is denoted
$V=\spanmax(S)$. When $V$ is generated by the columns of a matrix
$A$, this is denoted $V=\spanmax(A)$. This cone is closed with
respect to the usual Euclidean topology~\cite{BSS}.

A vector $z$ in a max cone $V\subseteq\Rpn$ is called an {\em extremal} if $z=u\oplus v$ and $u,v\in V$ imply
$z=u$ or $z=v$. Any finitely generated max cone is generated by its extremals, see Wagneur~\cite{Wag-91} and~\cite{BSS,GK-07} for recent extensions. 


The {\em maximum cycle geometric mean} of~$A$ is defined by
\begin{equation}\label{mcgm}
\lambda(A)=\max\{ w(C)^{1/l(C)}\colon C \text{ is a cycle in }\digr(A)\} \enspace.
\end{equation}
The cycles with the cycle geometric mean equal to $\lambda(A)$ are
called {\em critical}, and the nodes and the edges of $\digr(A)$
that belong to critical cycles are called {\em critical}. The set of
critical nodes is denoted by $N_c(A)$, the set of critical edges by
$E_c(A)$, and these nodes and edges give rise to the {\em critical
graph} of $A$, denoted by $\crit(A) = (N_c(A), E_c(A))$.
A maximal strongly connected subgraph of 
$\crit(A)$ is called a strongly connected component of $\crit(A)$.
Observe that $\crit(A)$, in general, consists of several nontrivial
strongly connected components, and that it never has any edges
connecting different strongly connected components.

\if{
The {\em critical graph} of $A$, denoted by $\crit(A)$, consists of all nodes and edges belonging to the cycles
which attain the maximum in~\eqref{mcgm}. The set of such nodes will be called {\em critical} and denoted $N_c$;
the set of such edges will be called {\em critical} and denoted $E_c$.
Observe that the critical graph
consists of several strongly connected subgraphs of $\digr(A)$. Maximal such subgraphs are the
{\em strongly connected components} of $\crit(A)$, and there are no
critical edges connecting different
strongly connected components of $\crit(A)$.
}\fi

If for $A\in\Rpnn$ we have
$A\otimes x=\rho x$
with $\rho\in\Rp$ and a nonzero $x\in\Rpn$, then $\rho$ is a {\em max(-algebraic) eigenvalue} and $x$ is a {\em max(-algebraic)
eigenvector} associated with $\rho$. The set of
max eigenvectors $x$ associated with~$\rho$, with the zero vector adjoined to it, is a max cone. It is denoted
by $\vmax(A,\rho)$.

An irreducible $A\in\Rpnn$ has a unique max-algebraic eigenvalue
equal to $\lambda(A)$~\cite{BCOQ,But:10,CG:79,HOW:05}. In general
$A$ may have several max eigenvalues, and the greatest of them
equals $\lambda(A)$.  The greatest max eigenvalue will also be
denoted by $\rhomax(A)$ (thus $\rhomax(A)=\lambda(A)$), and called
the {\em principal max eigenvalue} of $A$. In the irreducible case,
the unique max eigenvalue $\rhomax(A)=\lambda(A)$ is also called the
{\em max(-algebraic) Perron root}. When max algebra and nonnegative
algebra are considered simultaneously (e.g., Section~\ref{s:key}),
the principal eigenvalue is denoted by $\rho(A)$.

Unlike in nonnegative algebra, there is an explicit description of
$\vmax(A,\rhomax(A))$, see Theorem~\ref{t:FVmaxalg}. This
description uses the {\em Kleene star}
\begin{equation}
\label{def:kleenestar}
A^*=I\oplus A\oplus A^{\otimes 2}\oplus A^{\otimes 3}\oplus\ldots.
\end{equation}
Series~\eqref{def:kleenestar} converges if and only if
$\rhomax(A)\leq 1$, in which case $A^*=I\oplus A\oplus\ldots\oplus
A^{\otimes(n-1)}$~\cite{BCOQ,But:10,HOW:05}. Note that if
$\rhomax(A)\neq 0$, then $\rhomax(A/\rhomax(A))=1$, hence
$(A/\rhomax(A))^*$ always converges.

The {\em path interpretation} of max-algebraic matrix powers
$A^{\otimes l}$ is that each entry $a^{\otimes l}_{ij}$ is equal to
the greatest weight of $\itoj$ path with length $l$. Consequently,
for $i\neq j$, the entry $a^*_{ij}$ of $A^*$ is equal to the
greatest weight of an $\itoj$ path (with no length restrictions).

\subsection{Cyclicity and periodicity}
\label{ss:cycl} Consider a nontrivial strongly connected graph
$\digr$ (that is, a strongly connected graph with at least one node
and one edge). Define its {\em cyclicity} $\sigma$ as the gcd of the
lengths of all elementary cycles. It is known that for any vertices
$i,j$ there exists a number $l$ such that $l(P)\equiv l(\text{mod}\
\sigma)$
for all $\itoj$ paths $P$.

When the length of an $\itoj$ path is a multiple of $\sigma$ (and
hence we have the same for all $\jtoi$ paths), $i$ and $j$ are said
to belong to the same {\em cyclic class}. When the length of this
path is $1$ modulo $\sigma$ (in other words, if $l(P)-1$ is a
multiple of $\sigma$), the cyclic class of $i$ (resp., of $j$) is
{\em previous} (resp., {\em next}) with respect to the class of $j$
(resp., of $i$).
See~\cite[Chapter 8]{But:10} and~\cite{BR,Ser-09,Ser-11<attr>} for
more information. Cyclic classes are also known as {\em components
of imprimitivity}~\cite{BR}.

The cyclicity of a trivial graph is defined to be $1$, and the
unique node of a trivial graph is defined to be its only cyclic
class.

We define the cyclicity of a (general) graph containing several
strongly connected components to be the lcm of the cyclicities
of the components. 

For a graph $\digr=(N,E)$ with $N=\{1,\ldots,n\}$, define the {\em associated matrix} $A=(a_{ij})\in\{0,1\}^{n\times n}$
by
$
a_{ij}=1\Leftrightarrow (i,j)\in E.
$
This is a matrix over the Boolean semiring $\boole:=\{0,1\}$, where addition
is the disjunction and multiplication is the conjunction operation. This semiring is a subsemiring of
max algebra, so that it is possible to consider the associated matrix as a matrix in max algebra whose entries are either $0$ or $1$.


For a graph $\digr$ and any $k\geq 1$, define $\digr^k$ as a graph that has the
same vertex set as $\digr$ and $(i,j)$ is an edge of $\digr^k$ if and only if there
is a path of length $k$ on $\digr$ connecting $i$ to $j$. Thus, if a Boolean matrix
$A$ is associated with $\digr$, then the Boolean matrix power $A^{\otimes k}$ is associated
with $\digr^k$. Powers of Boolean matrices (over the Boolean semiring) are a topic of independent interest, see~Brualdi-Ryser~\cite{BR}, Kim~\cite{Kim}. We will need the
following observation.

\begin{theorem}[cf. {\cite[Theorem 3.4.5]{BR}}]
\label{t:brualdi} Let $\digr$ be a strongly connected graph with
cyclicity $\sigma$.
\begin{itemize}
\item[{\rm (i)}] $\digr^k$ consists of gcd $(k,\sigma)$
nontrivial strongly connected components not accessing each
other. If $\digr$ is nontrivial, then so are all the components
of $\digr^k$.
\item[{\rm (ii)}] The node set of each  component of $\digr^k$ consists of $\sigma/$(gcd$(k,\sigma))$
cyclic classes of $\digr$. 
\end{itemize}
\end{theorem}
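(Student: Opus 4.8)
The plan is to work entirely with the digraph $\digr$ and its powers, translating the statement into the language of cyclic classes and paths modulo the cyclicity $\sigma$. First I would fix notation: let $d=\gcd(k,\sigma)$, and recall that by the basic theory of cyclicity (Subsection~\ref{ss:cycl}), the vertices of $\digr$ partition into $\sigma$ cyclic classes $C_0,\ldots,C_{\sigma-1}$, where an edge of $\digr$ always goes from $C_r$ to $C_{r+1}$ (indices mod $\sigma$), and more generally any path of length $\ell$ from $i\in C_r$ lands in $C_{r+\ell}$. The key reformulation is that $\digr^k$ has an edge from $i\in C_r$ to $j\in C_s$ only if $s\equiv r+k\pmod{\sigma}$, i.e.\ only if $s-r$ lies in the subgroup $k\ZZ/\sigma\ZZ=d\ZZ/\sigma\ZZ$ of $\ZZ/\sigma\ZZ$. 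This subgroup has index $d$, so its cosets group the $\sigma$ cyclic classes into $d$ blocks, each block being a union of $\sigma/d$ cyclic classes. I would then show these $d$ blocks are exactly the strongly connected components of $\digr^k$, which immediately gives both (i) and (ii).

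For the two things that need proof — that within a single block $\digr^k$ is strongly connected, and that distinct blocks do not access each other — the second is the easy half: it is the ``only if'' direction above, since a path of length $mk$ in $\digr$ shifts the cyclic-class index by $mk\equiv 0\pmod d$, so no path of $\digr^k$ can leave a block. For strong connectedness within a block, I would argue as follows. Take $i,j$ in the same block, say $i\in C_r$, $j\in C_s$ with $s-r\equiv k t\pmod\sigma$ for some integer $t$. Since $\digr$ is strongly connected, there is a path from $i$ to $j$; by the cyclic-class structure its length is $\equiv s-r\equiv kt\pmod\sigma$. Now I invoke the standard fact (the one quoted just before Theorem~\ref{t:brualdi}, that for fixed $i,j$ all path lengths are congruent mod $\sigma$ and every sufficiently large length in that residue class is achieved — this follows from a Chicken McNugget / numerical-semigroup argument using that the elementary cycle lengths have gcd $\sigma$): there exists a path from $i$ to $j$ of length exactly $kt+m\sigma$ for all large enough $m$. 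Choosing $m$ a multiple of $k$ (possible, e.g.\ $m=k m'$ for large $m'$), the length becomes $k(t+m'\sigma)$, a multiple of $k$, hence this path is realized as a path of length $t+m'\sigma$ in $\digr^k$ from $i$ to $j$. So every vertex of the block reaches every other; the block is strongly connected in $\digr^k$.

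It remains to count components and check the cyclic-class content, and to handle triviality. Each of the $d$ blocks is strongly connected and they are mutually non-accessing, and every vertex lies in some block, so $\digr^k$ has exactly $d=\gcd(k,\sigma)$ strongly connected components (no component can span two blocks, by non-accessibility), proving (i); and each block is the union of the $\sigma/d$ cyclic classes in one coset of $d\ZZ/\sigma\ZZ$, proving (ii). For nontriviality: if $\digr$ is nontrivial (so $\sigma\geq 1$ and there is at least one edge, in fact at least one cycle), then any vertex $i$ lies on a closed walk of length a multiple of $\sigma$, hence of length a multiple of $k$ once we pad as above, so $\digr^k$ has a loop or a nontrivial cycle through each vertex — every component of $\digr^k$ is nontrivial. (If $\digr$ is trivial, $\sigma=1$, $d=1$, and $\digr^k=\digr$ is the single trivial component, consistent with the statement.)

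The main obstacle is the padding step: making a path from $i$ to $j$ whose length is simultaneously in the correct residue class mod $\sigma$ \emph{and} an exact multiple of $k$. The residue class is forced, so the real content is that once the length is large and in the right class mod $\sigma$, it can be taken to be any such value — this is precisely where one needs that the gcd of elementary cycle lengths is $\sigma$ (allowing us to insert detours around critical-length cycles to increase the length by $\sigma$ at a time), combined with an elementary number-theoretic observation that an arithmetic progression $kt+\sigma\ZZ_{\geq 0}$ contains multiples of $k$ (indeed $k(t+m'\sigma)$ for all $m'\geq 0$). I would cite~\cite[Chapter 8]{But:10} or~\cite{BR} for the underlying path-length lemma rather than reprove it, so the argument stays short.
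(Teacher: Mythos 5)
The paper does not actually prove Theorem~\ref{t:brualdi}: it is quoted (with the qualifier \emph{cf.}) from Brualdi--Ryser \cite[Theorem~3.4.5]{BR} as a known fact about the cyclic structure of strongly connected digraphs, so there is no in-paper proof to compare against. That said, your argument is correct and is essentially the standard one underlying the cited result. You partition the $\sigma$ cyclic classes into the $\gcd(k,\sigma)$ cosets of the subgroup $k\ZZ/\sigma\ZZ=\gcd(k,\sigma)\ZZ/\sigma\ZZ$, observe that every $\digr^k$-path shifts the cyclic-class index by a multiple of $\gcd(k,\sigma)$ and hence stays within a coset (mutual non-accessibility), and establish strong connectedness inside a coset by padding an $\itoj$ path in $\digr$ to a length that lies in the forced residue class mod $\sigma$ and is also an exact multiple of $k$; the numerics work because $k\cdot(t+m'\sigma)$ ranges over arbitrarily large multiples of $k$ in the right class. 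The step you rightly single out as needing justification --- that every sufficiently large length in the admissible residue class is realized by some $\itoj$ walk --- is available from the paper's preliminaries: it follows from Lemma~\ref{l:schur} (Frobenius coin problem) applied to the elementary cycle lengths, combined with splicing closed walks through those cycles onto a base path, and the statement just before Theorem~\ref{t:brualdi} records the mod-$\sigma$ congruence. Your treatment of the trivial case, of nontriviality of components when $\digr$ is nontrivial, and of the cyclic-class count $\sigma/\gcd(k,\sigma)$ per component are all in order. No gap.
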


\begin{corollary}
\label{c:div-boolean} Let $\digr$ be a strongly connected graph with
cyclicity $\sigma$, and let $k,l\geq 1$. Then gcd$(k,\sigma)$
divides gcd$(l,\sigma)$ if and only if $\digr^k$ and $\digr^l$ are
such that the node set of every component of $\digr^l$ is contained
in the node set of a component of $\digr^k$.
\end{corollary}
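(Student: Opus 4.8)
The plan is to extract from Theorem~\ref{t:brualdi} an explicit description of the components of $\digr^k$ in terms of the cyclic classes of $\digr$, and then to translate the asserted set-containment into a plain divisibility relation.

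First I would set $d_k:=\gcd(k,\sigma)$, $d_l:=\gcd(l,\sigma)$ and label the $\sigma$ cyclic classes of $\digr$ by the residues $0,1,\dots,\sigma-1$ so that moving along one edge of $\digr$ sends the class $c$ to the class $c+1\pmod{\sigma}$; this labelling is legitimate by the ``previous/next'' convention of Subsection~\ref{ss:cycl}. Since an edge of $\digr^k$ is a path of length $k$ in $\digr$, it sends class $c$ to class $c+k\pmod{\sigma}$, so inside one strongly connected component of $\digr^k$ the classes reachable from $c$ are exactly the $c'$ with $c'\equiv c\pmod{d_k}$ --- the subgroup of $\ZZ/\sigma\ZZ$ generated by $k$ being the same as that generated by $d_k$. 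Theorem~\ref{t:brualdi} tells us that $\digr^k$ has precisely $d_k$ mutually non-accessing components, each the union of $\sigma/d_k$ whole cyclic classes; a counting argument then forces these components to be exactly the $d_k$ residue classes modulo $d_k$. Hence two cyclic classes of $\digr$ lie in a common component of $\digr^k$ if and only if their labels agree modulo $d_k$, and likewise for $\digr^l$ with $d_l$ in place of $d_k$.

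Because every component of $\digr^k$ and of $\digr^l$ is a union of whole cyclic classes, the condition that the node set of every component of $\digr^l$ be contained in the node set of a component of $\digr^k$ becomes: any two cyclic classes lying in a common component of $\digr^l$ also lie in a common component of $\digr^k$, i.e.\ $c'\equiv c\pmod{d_l}$ implies $c'\equiv c\pmod{d_k}$ for all labels. This holds if $d_k\mid d_l$; conversely, taking $c=0$ and $c'=d_l$ when $d_l<\sigma$ yields $d_k\mid d_l$, and when $d_l=\sigma$ one has $d_k\mid\sigma=d_l$ anyway. So the set-containment is equivalent to $d_k\mid d_l$, which is exactly the assertion that $\gcd(k,\sigma)$ divides $\gcd(l,\sigma)$.

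I expect the only delicate point to be the bookkeeping in the second paragraph --- pinning down that the components of $\digr^k$ are precisely the residue classes modulo $\gcd(k,\sigma)$ --- and there the quantitative part of Theorem~\ref{t:brualdi} (the count and sizes of the components, and their mutual non-accessibility) is exactly what makes the reachability computation match up. The trivial-graph case $\sigma=1$ requires nothing extra, since then both sides of the equivalence hold automatically.
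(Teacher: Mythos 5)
Your proof is correct, and it takes a genuinely different route from the paper's. The paper argues both directions somewhat indirectly: for the ``if'' direction it compares the number of cyclic classes per component of $\digr^k$ and of $\digr^l$ and deduces divisibility from the resulting ratio being an integer; for the ``only if'' direction it first observes that $\digr^k$ and $\digr^{\gcd(k,\sigma)}$ have components with the same node sets, and then notes that $\digr^{\gcd(l,\sigma)}$ is a power of $\digr^{\gcd(k,\sigma)}$. Your argument instead pins down the component structure of $\digr^k$ once and for all: after labelling the cyclic classes by residues mod $\sigma$ so that an edge advances the label by $1$, you observe that reachability in $\digr^k$ preserves the label mod $d_k=\gcd(k,\sigma)$, and you use the size and number count from Theorem~\ref{t:brualdi} to force each component to be exactly a residue class mod $d_k$. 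The containment criterion then reduces to a bare number-theoretic implication between congruences. What this buys you is a self-contained, explicit description of the partition rather than two separate indirect arguments; the mild cost is the extra bookkeeping in verifying the labelling convention and the reachability step. One small thing worth making explicit: the forward containment from reachability together with the cardinality count from Theorem~\ref{t:brualdi} is what forces exact equality with the residue classes, so the ``subgroup generated by $k$ equals that generated by $d_k$'' remark, while true, is not the load-bearing step --- the counting is.
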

\begin{proof} Assume that $\digr$ is nontrivial.\\
``If''. Since the node set of each component of $\digr^k$ consists of
$\sigma/$gcd$(k,\sigma)$ cyclic classes of $\digr$ and is the disjoint union of the
node sets of certain components of $\digr^l$, and the node set of each component of
$\digr^l$ consists of $\sigma/$gcd$(l,\sigma)$ cyclic classes of $\digr$, it follows
that the node set of each component of $\digr^k$ consists of
$\frac{\sigma}{\text{gcd}(k,\sigma)}/\frac{\sigma}{\text{gcd}(l,\sigma)}=
\frac{\text{gcd}(l,\sigma)}{\text{gcd}(k,\sigma)}$ components of $\digr^l$. Therefore,
gcd$(k,\sigma)$ divides gcd$(l,\sigma)$.

``Only if.'' Observe that the node sets of the compopnents $\digr^k$
and $\digr^{\text{gcd}(k,\sigma)}$ (or $\digr^l$ and
$\digr^{\text{gcd}(l,\sigma)}$) are the same: since gcd$(k,\sigma)$
divides $k$, each component of $\digr^{\text{gcd}(k,\sigma)}$ splits
into several components of $\digr^k$, but the total number of
components is the same (as
gcd$($gcd$(k,\sigma),\sigma)=$gcd$(k,\sigma)$), hence their node
sets are the same. The claim follows since the node set of each
component of $\digr^{\text{gcd}(k,\sigma)}$ splits into several
components of $\digr^{\text{gcd}(l,\sigma)}$.
\end{proof}

Let us formally introduce the definitions related to periodicity and
ultimate periodicity of sequences (whose elements are of arbitrary
nature). A sequence $\{\Omega_k\}_{k\geq 1}$ is called {\em
periodic} if there exists an integer $p$ such that $\Omega_{k+p}$ is
identical with $\Omega_k$ for all $k$. The least such $p$ is called
the {\em period} of $\{\Omega_k\}_{k\geq 1}$. A sequence
$\{\Omega_k\}_{k\geq 1}$ is called {\em ultimately periodic} if the
sequence $\{\Omega_k\}_{k\geq T}$ is periodic for some $T\geq 1$.
The least such $T$ is called the {\em periodicity threshold} of
$\{\Omega_k\}_{k\geq 1}$.

The following observation is crucial in the theory of
Boolean matrix powers.

\begin{theorem}[Boolean Cyclicity~\cite{Kim}]
\label{t:BoolCycl} Let $\digr$ be a strongly connected graph on $n$
nodes, with cyclicity $\sigma$.
\begin{itemize}
\item [{\rm (i)}] The sequence
$\{\digr^k\}_{k\geq 1}$ is ultimately periodic with the period
$\sigma$. The periodicity threshold, denoted by $T(\digr)$, does
not exceed $(n-1)^2+1$.
\item[{\rm (ii)}] If $\digr$ is nontrivial, then for $k\geq T(\digr)$ and a
multiple of $\sigma$, $\digr^k$ consists of $\sigma$ complete subgraphs
not accessing each other.
\end{itemize}
\end{theorem}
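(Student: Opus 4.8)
The plan is to reduce the whole statement to one fact about the lengths of paths in $\digr$ (recall that in this paper a path may repeat vertices). If $\digr$ is trivial, it has a single node and no edge, so $\digr^k$ has no edge for every $k\geq 1$; hence $\digr^k=\digr$ for all $k$, the sequence is constant with period $1=\sigma$, and $T(\digr)=1=(n-1)^2+1$. So assume $\digr$ is nontrivial, with cyclic classes $C_0,\dots,C_{\sigma-1}$. Every edge of $\digr$ runs from some $C_r$ to $C_{r+1}$ (indices modulo $\sigma$); consequently, for $i\in C_a$ and $j\in C_b$, every $\itoj$ path has length $\equiv d_{ij}\pmod\sigma$, where $d_{ij}:=(b-a)\bmod\sigma$, so $(i,j)$ can be an edge of $\digr^k$ only if $k\equiv d_{ij}\pmod\sigma$.

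First I would prove the partial converse: for every ordered pair $(i,j)$ there is a threshold $N_{ij}$ such that $\digr$ contains an $\itoj$ path of length $m$ whenever $m\geq N_{ij}$ and $m\equiv d_{ij}\pmod\sigma$. Fix $i$ and let $L_i\subseteq\Nat$ be the set of lengths of closed paths at $i$. Since $\digr$ is strongly connected and nontrivial, $i$ lies on a cycle, so $L_i\neq\emptyset$; moreover $L_i$ is closed under addition, and since any elementary cycle can be routed through $i$ --- producing two closed paths at $i$ whose lengths differ by the length of that cycle --- $\gcd L_i$ divides every elementary cycle length, so $\gcd L_i=\sigma$. By a standard fact about numerical semigroups, $L_i$ thus contains all sufficiently large multiples of $\sigma$, say $L_i\supseteq\{\sigma m\colon m\geq M_i\}$. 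Now fix one $\itoj$ path $W$, of length $\ell_0$ (so $\ell_0\equiv d_{ij}\pmod\sigma$). For $m\equiv d_{ij}\pmod\sigma$ with $m\geq\ell_0+\sigma M_j$, the number $m-\ell_0$ is a multiple of $\sigma$ lying in $L_j$, so concatenating $W$ with a closed path at $j$ of length $m-\ell_0$ yields an $\itoj$ path of length $m$. Hence $N_{ij}:=\ell_0+\sigma M_j$ works; put $T(\digr):=\max_{i,j}N_{ij}$.

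The other assertions then follow quickly. For $k\geq T(\digr)$ we have $(i,j)\in\digr^k\iff k\equiv d_{ij}\pmod\sigma$, and the right-hand condition is invariant under $k\mapsto k+\sigma$, so $\digr^{k+\sigma}=\digr^k$ and $\sigma$ is a period of the sequence. That no $p$ with $0<p<\sigma$ is a period I would get from Theorem~\ref{t:brualdi}(i): choosing $k\geq T(\digr)$ a multiple of $\sigma$, the graph $\digr^k$ has $\gcd(k,\sigma)=\sigma$ strongly connected components, while $\digr^{k+p}$ has $\gcd(k+p,\sigma)=\gcd(p,\sigma)<\sigma$ of them, contradicting $\digr^{k+p}=\digr^k$; so the period equals $\sigma$. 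For (ii), when $k$ is a multiple of $\sigma$ Theorem~\ref{t:brualdi} gives that $\digr^k$ consists of $\sigma$ mutually non-accessing strongly connected components whose node sets are precisely $C_0,\dots,C_{\sigma-1}$; and since $d_{ij}=0$ whenever $i,j$ lie in a common cyclic class, the lemma yields $(i,j)\in\digr^k$ as soon as $k\geq T(\digr)$. Hence for such $k$ the graph $\digr^k$ consists of $\sigma$ complete subgraphs, one on each cyclic class, not accessing each other.

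The step I expect to be genuinely technical is the sharp bound $T(\digr)\leq(n-1)^2+1$. The argument above only bounds $N_{ij}$ by a shortest-path length ($\leq n-1$) plus a Frobenius-type bound for the semigroup $L_j$, which is far too large. To reach $(n-1)^2+1$ one must work harder: pass to the strongly connected graphs $(\digr^{\sigma})|_{C_r}$, which are primitive (dividing the lengths of closed paths at a vertex by $\sigma$ leaves a semigroup of gcd $1$), apply Wielandt's classical exponent bound to each, and then combine the resulting estimates under the constraint $\sum_r |C_r|=n$ so that the extremal configuration is the primitive one $\sigma=1$. This quantitative part is the content of~\cite{BR,Kim}, which I would invoke for the bound while deriving the structural statements as above.
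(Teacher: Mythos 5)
The paper never proves Theorem~\ref{t:BoolCycl} --- it is stated as a citation to Kim~\cite{Kim} (cf. also Brualdi--Ryser~\cite{BR}) and used as a black box in the rest of the text (via Corollary~\ref{TcaTcrit} and in the proof of Theorem~\ref{t:girls}). So there is no internal proof against which to match your argument.

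Your structural argument is correct and is essentially the standard one from the cited sources: reduce to the claim that for each ordered pair $(i,j)$ the lengths of $\itoj$ paths eventually exhaust the residue class $d_{ij} \bmod \sigma$, prove it via the numerical semigroup $L_i$ of closed-walk lengths at a vertex and Lemma~\ref{l:schur}, then read off period $\sigma$ (lower bound from Theorem~\ref{t:brualdi}(i)) and the complete-subgraph structure. Two small points worth tightening. First, your routing construction only yields $\gcd L_i \mid \sigma$; you also need $\sigma \mid \gcd L_i$, which follows from the cyclic-class congruence you recall at the start (take $j=i$: every closed walk at $i$ has length $\equiv 0 \pmod{\sigma}$). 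You have the ingredient --- just invoke it. Second, defining $T(\digr) := \max_{i,j} N_{ij}$ gives an upper bound for the true periodicity threshold rather than the threshold itself; since periodicity propagates the complete-subgraph property downward from $\max_{i,j} N_{ij}$ to the actual threshold, part (ii) still holds for all $k \geq T(\digr)$, but this deserves a sentence. Your deferral of the sharp bound $T(\digr) \leq (n-1)^2+1$ (Wielandt exponent applied to the primitive blocks $(\digr^\sigma)|_{C_r}$, optimized over $\sum_r|C_r|=n$) is reasonable --- the paper never actually uses this numeric bound, only the qualitative periodicity and the complete-subgraph structure, and cites it as given.
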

For brevity, we will refer to $T(\digr)$ as the periodicity
threshold of $\digr$.
We have the following two max-algebraic
extensions of Theorem~\ref{t:BoolCycl}.

\begin{theorem}[Cyclicity Theorem, Cohen~et~al.~\cite{CDQV-83}]
\label{t:Cycl1} Let $A\in\Rpnn$ be irreducible, let $\sigma$ be the
cyclicity of $\crit(A)$ and $\rho:=\rhomax(A)$. Then the sequence
$\{(A/\rho)^{\otimes k}\}_{k\geq 1}$ is ultimately periodic with
period $\sigma$.
\end{theorem}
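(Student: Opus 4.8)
The plan is to reduce the statement to the Boolean Cyclicity Theorem~\ref{t:BoolCycl} by a normalization and a support argument, exactly the strategy that underlies the classical proofs of Cohen~et~al. First I would normalize: replacing $A$ by $A/\rho$ we may assume $\rho := \rhomax(A) = 1$, so that $\lambda(A) = 1$ and the critical cycles are precisely the cycles of weight $1$. With this normalization the sequence $\{A^{\otimes k}\}_{k\geq 1}$ is bounded: every entry $a^{\otimes k}_{ij}$ is the largest weight of an $\itoj$ path of length $k$, and since $\lambda(A)=1$ no cycle has weight exceeding $1$, so these weights stay bounded above (and of course below, by $0$). Boundedness is what makes an ultimately periodic limit possible at all.

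The heart of the argument is to control, for each pair $(i,j)$, both the \emph{support} pattern (whether $a^{\otimes k}_{ij}$ is positive) and the actual \emph{value} along the periodic regime. For the support pattern: the Boolean matrix $B$ with $b_{ij}=1 \iff a_{ij}\neq 0$ is exactly the matrix associated with $\digr(A)$, and $a^{\otimes k}_{ij}\neq 0 \iff$ there is an $\itoj$ path of length $k$ in $\digr(A) \iff (i,j)\in\digr(A)^k$. Since $A$ is irreducible, $\digr(A)$ is strongly connected, and Theorem~\ref{t:BoolCycl}(i) gives that $\{\digr(A)^k\}$ is ultimately periodic with period equal to the cyclicity of $\digr(A)$. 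One subtlety here is that the claimed period is the cyclicity $\sigma$ of the \emph{critical} graph $\crit(A)$, not of $\digr(A)$ itself; I would need the standard fact (provable by a path-concatenation/critical-cycle insertion argument, or cited from~\cite{CDQV-83,But:10}) that in the ultimately periodic regime it is $\sigma$ that governs the period of the whole power sequence — intuitively, once $k$ is large, any near-optimal $\itoj$ path of length $k$ can be rerouted through critical cycles, so only lengths modulo $\sigma$ matter for the attained weights, while the cyclic classes of $\crit(A)$ (via Theorem~\ref{t:brualdi}) organize which $(i,j)$ entries are "switched on" in which residue class.

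For the values: for $k$ large, an optimal $\itoj$ path of length $k$ that nearly achieves weight close to the supremum must spend most of its length winding around critical cycles (each of weight $1$) and only a bounded "transient" portion off the critical graph; writing $k = q\sigma + r$, the optimal weight depends only on $r = k \bmod \sigma$ once $q$ is large enough, because $\sigma$-many steps around a critical component return to the same cyclic class at no weight cost. Making this precise — identifying the threshold and showing the attained weight is genuinely constant (not merely bounded) on each arithmetic progression modulo $\sigma$ — is the main obstacle, and it is where one invokes the structure of $\crit(A)$: its strongly connected components, their cyclicities, and the fact (Theorem~\ref{t:brualdi}(ii)) that each component of $\crit(A)^k$ consists of $\sigma/\gcd(k,\sigma)$ cyclic classes. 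I would assemble these pieces to conclude $(A/\rho)^{\otimes(k+\sigma)} = (A/\rho)^{\otimes k}$ for all sufficiently large $k$, which is the assertion. In practice, since this is a cited theorem (Cohen~et~al.~\cite{CDQV-83}; see also~\cite[Chapter 8]{But:10}), the paper will most likely simply quote it and refer to those sources for the detailed transient/critical-cycle analysis rather than reproduce it here.
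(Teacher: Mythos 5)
The paper does not prove Theorem~\ref{t:Cycl1}: immediately after its statement the text says ``Theorem~\ref{t:Cycl1} is standard~\cite{BCOQ,But:10,HOW:05}'' and moves on. You correctly anticipate this in your final sentence, so there is no proof in the paper against which to measure your sketch line by line.

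Your outline does capture the standard strategy that one finds in the cited sources: normalize to $\rho=1$ so the powers are bounded, track the positivity pattern via the Boolean digraph $\digr(A)$ and Theorem~\ref{t:BoolCycl}, and then argue that for large $k$ an optimal path of length $k$ differs from one of length $k+\sigma$ only by winding additionally around critical cycles at no weight cost. Two things in the sketch deserve more care. First, the Boolean step, as you note, gives periodicity of the \emph{support} with period the cyclicity of $\digr(A)$, which divides $\sigma=\sigma_{\crit(A)}$ (since the critical cycles form a subset of all cycles, the gcd of all cycle lengths divides the gcd of critical cycle lengths); this is consistent with, but strictly weaker than, the assertion that the \emph{values} have period exactly $\sigma$. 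Second, and this you flag yourself, the genuinely hard content is showing that the values are ultimately constant on each residue class $k\bmod\sigma$, and that $\sigma$ is the least such period. That requires either a CSR-type decomposition $A^{\otimes k}= C\otimes S^{\otimes k}\otimes R$ (as in~\cite{SS-11,Ser-09}) or a careful path-rerouting argument with an explicit transient bound, and you present it as a heuristic rather than a proof. The visualization machinery that the paper sets up (Theorem~\ref{t:strictvis}, Lemma~\ref{l:CAk}) is exactly the tool that reduces the value analysis on critical rows and columns to the Boolean case; if you were to fill the gap, that is the route the paper's framework suggests. As it stands, your proposal is a correct description of the proof strategy with honestly acknowledged gaps, matching the paper's decision to cite rather than prove.
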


\begin{theorem}[Cyclicity of Critical Part, Nachtigall~\cite{Nacht}]
\label{t:nacht} Let $A\in\Rpnn$, $\sigma$ be the cyclicity of
$\crit(A)$ and $\rho:=\rhomax(A)$. Then the sequences
$\{(A/\rho)^{\otimes k}_{i\cdot}\}_{k\geq 1}$ and
$\{(A/\rho)^{\otimes k}_{\cdot i}\}_{k\geq 1}$, for $i\in N_c(A)$,
are ultimately periodic with period $\sigma$. The greatest of their
periodicity thresholds, denoted by $\Tc(A)$, does not exceed $n^2$.
\end{theorem}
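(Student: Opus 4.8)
\emph{Proof plan.} First normalize: replacing $A$ by $A/\rho$, we may assume $\rho=\rhomax(A)=\lambda(A)=1$, so that $A^*$ converges and every cyclic portion of a walk has weight at most $1$. Since transposing $A$ interchanges the two families of sequences while preserving $\crit(A)$ and its cyclicity, it suffices to treat the columns $\{A^{\otimes k}_{\cdot i}\}_{k\ge1}$ for $i\in N_c(A)$. Fixing such an $i$, letting $C$ be the strongly connected component of $\crit(A)$ containing $i$ and $\sigma_C$ its cyclicity (so $\sigma_C\mid\sigma$), it then suffices to show that each scalar sequence $\{A^{\otimes k}_{ji}\}_{k\ge1}$ is ultimately periodic with period $\sigma_C$ and threshold at most $n^2$.

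The backbone will be a decomposition of walks at their \emph{critical tail}: in a $j\to i$ walk $W$ of length $k$, take the longest suffix $W_2$ consisting only of critical edges and let $m$ be the node where it starts; as $\crit(A)$ has no edges between distinct components, $W_2$ lies entirely in $C$, so $m\in C$. Reading off lengths and weights of $W=W_1W_2$ for the upper bound, and concatenating optimal pieces for the reverse inequality, will yield the identity
\[
A^{\otimes k}_{ji}=\bigoplus_{m\in C}\ \bigoplus_{l_1+l_2=k} A^{\otimes l_1}_{jm}\,(A_C)^{\otimes l_2}_{mi},
\]
where $A_C$ is $A$ restricted to the node set of $C$ with all non-critical entries deleted. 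Now $A_C$ is irreducible with $\lambda(A_C)=1$, and since every cycle of $A_C$ decomposes into critical cycles of weight $1$ its critical graph is $C$, of cyclicity $\sigma_C$. I would then invoke Theorem~\ref{t:BoolCycl} for $C$, together with the weight-$1$ property of critical closed walks, to see that $(A_C)^{\otimes l_2}_{mi}$ is $0$ unless $l_2$ is congruent mod $\sigma_C$ to the cyclic-class offset from $m$ to $i$ in $C$, and on that residue class equals the constant $(A_C)^*_{mi}$ once $l_2\ge n^2-n+1$ (pad an optimal $m\to i$ walk in $C$, of length $\le n-1$, by a weight-$1$ closed critical walk through $i$ whose length is a multiple of $\sigma_C$ and at least the Boolean threshold $(n-1)^2+1$).

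Feeding this into the identity, for each fixed $m\in C$ the inner sum becomes a max-algebraic convolution of the bounded sequence $\big(A^{\otimes l_1}_{jm}\big)_{l_1}$ (all terms $\le A^*_{jm}$) with the sequence $\big((A_C)^{\otimes l_2}_{mi}\big)_{l_2}$, which is eventually $\sigma_C$-periodic and concentrated on one residue class; such a convolution is eventually $\sigma_C$-periodic provided the bounded factor is eventually constant along each residue class modulo $\sigma_C$. To get this one uses that $m$, being critical, lies on weight-$1$ critical cycles of lengths divisible by $\sigma_C$, so that a near-optimal value of $A^{\otimes l_1}_{jm}$ stays a lower bound at all large $l_1$ of the same residue, while the optimal $j\to m$ walks of large length reduce, up to bounded error, to a $j\to m$ path padded by weight-$1$ critical cycles, so the sequence takes only finitely many values — hence eventually constant on each residue class. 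Taking the maximum over the finitely many $m\in C$ keeps ultimate $\sigma_C$-periodicity, and $\sigma_C\mid\sigma$. The threshold bound $\Tc(A)\le n^2$ will come from carrying the bounds $\le n-1$ (path lengths) and $\le(n-1)^2+1$ (Boolean thresholds of the components of $\crit(A)$) through the argument.

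I expect the hard part to be precisely this last stage — showing that the prefix sequences $\big(A^{\otimes l_1}_{jm}\big)_{l_1}$, and hence all mixed entries $A^{\otimes k}_{ji}$ with $j$ arbitrary, are eventually $\sigma_C$-periodic with the explicit threshold $n^2$: this sequence is a priori only bounded, and pinning down the residues mod $\sigma_C$ of the lengths of its near-optimal walks is exactly where the combinatorics of weight-$1$ cycles in $\crit(A)$ (Theorems~\ref{t:brualdi} and~\ref{t:BoolCycl}) must be used with care, and where the constant $n^2$ is earned. An essentially equivalent and perhaps cleaner alternative would be to prove, for large $k$, a factorization $A^{\otimes k}=P\otimes S^{\otimes k}\otimes R$ valid on the critical columns, with $S$ a matrix whose $\sigma$-th power permutes the cyclic classes of $\crit(A)$, so that $S^{\otimes k}$ is $\sigma$-periodic in $k$; tracking the threshold in such a factorization again yields $\Tc(A)\le n^2$.
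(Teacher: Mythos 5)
The paper does not prove Theorem~\ref{t:nacht}; it cites it as Nachtigall's result, also found as~\cite[Theorem~8.3.6]{But:10}. So your sketch must stand on its own, and as you yourself flag, the load-bearing step is missing. The walk-at-critical-tail identity
\[
A^{\otimes k}_{ji}=\bigoplus_{m\in C}\ \bigoplus_{l_1+l_2=k} A^{\otimes l_1}_{jm}\,(A_C)^{\otimes l_2}_{mi}
\]
is correct, but it is a \emph{circular} reduction: both $i$ and $m$ range over the critical nodes of the same component $C$, so the eventual $\sigma_C$-periodicity of the left-hand side is reduced to the eventual $\sigma_C$-periodicity of $\bigl(A^{\otimes l_1}_{jm}\bigr)_{l_1}$ --- which is ``eventually constant along each residue class mod~$\sigma_C$'', i.e.\ exactly the same assertion for column $m$ that the theorem makes for column $i$ (and for $m=i$ the summand literally involves the sequence you are trying to control). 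The convolution step therefore buys nothing; the whole theorem is concentrated in the ``prefix stabilization'' you defer to the end.

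For that prefix step you offer only the phrases ``up to bounded error'' and ``takes only finitely many values'', neither of which is established. The padding argument (a weight-$1$ closed walk at $m$ of length a large multiple of $\sigma_C$) does give eventual monotonicity of $l\mapsto A^{\otimes l}_{jm}$ along residue classes, and $A^*_{jm}$ gives boundedness, but a bounded eventually non-decreasing real sequence need not stabilize in finite time; one must show it takes finitely many values, or directly exhibit, for $l\geq n^2$, an $l$-independent formula on each residue class. This is where the real combinatorics lives --- showing that an optimal $j\to m$ walk of large length can be rearranged, \emph{without loss of weight}, into a short prefix path, a critical closed walk of the needed length through a critical node of a prescribed cyclic class, and a short suffix --- and it is precisely here that the constant $n^2$ (roughly $(n-1)$ for path segments plus $(n-1)^2+1$ for the Boolean threshold of Theorem~\ref{t:BoolCycl}) must be extracted. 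Your alternative suggestion of proving a factorization $A^{\otimes k}=P\otimes S^{\otimes k}\otimes R$ on critical columns is in fact the CSR route of~\cite{SS-11} and would work, but again it is asserted rather than carried out. As written, the proposal has a genuine gap at exactly the point where the content of Nachtigall's theorem resides.
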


\if{
The least number $T$ (resp. $\Tc$) satisfying the condition of
Theorem~\ref{t:Cycl1} (resp. Theorem~\ref{t:nacht}) is denoted by
$T(A)$ (resp. $\Tc(A)$), and called the {\em cyclicity threshold}
(resp. the {\em critical cyclicity threshold}) of $A$.
}\fi

Theorem~\ref{t:Cycl1} is standard~\cite{BCOQ,But:10,HOW:05}, and
Theorem~\ref{t:nacht} can also be found as~\cite[Theorem
8.3.6]{But:10}. Here $A_{i\cdot}$ (resp. $A_{\cdot i}$) denote the
$i$th row (resp. the $i$th column) of $A$.

When the sequence  $\{(A/\rho)^{\otimes k}\}_{k\geq 1}$ (resp. the
sequences $\{(A/\rho)^{\otimes k}_{i\cdot}\}_{k\geq 1}$,\\
$\{(A/\rho)^{\otimes k}_{\cdot i}\}_{k\geq 1}$) are ultimately
periodic, we also say that the sequence $\{A^{\otimes k}\}_{k\geq
1}$ (resp. $\{A^{\otimes k}_{i\cdot}\}_{k\geq 1}$, $\{A^{\otimes
k}_{\cdot i}\}_{k\geq 1}$) is {\em ultimately periodic with growth
rate $\rho$}.

Let us conclude with a well-known number-theoretic result concerning the
coin problem of Frobenius, which we see as basic for both Boolean and max-algebraic cyclicity.

\begin{lemma}[e.g.,{\cite[Lemma 3.4.2]{BR}}]
\label{l:schur} Let $n_1,\ldots, n_m$ be integers such that\\
gcd$(n_1,\ldots,n_m)=k$. Then there exists a number $T$ such that
for all integers $l$ with $kl\geq T$, we have $kl=t_1n_1+\ldots
+t_mn_m$ for some $t_1,\ldots,t_m\geq 0$.
\end{lemma}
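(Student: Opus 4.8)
The plan is to reduce to the coprime case and then combine a B\'ezout relation with the classical ``window of consecutive representable integers'' argument. First I would dispose of trivialities: if every $n_i$ is zero the claim is vacuous (take $T=0$), any zero entry may be discarded since it affects neither the gcd nor the conclusion, and in our applications the $n_i$ are cycle lengths, so I assume from now on that $m\geq 1$ and all $n_i>0$ (in particular $k\geq 1$); the case with negative entries is entirely analogous and is omitted. Dividing through by $k$ and setting $n_i':=n_i/k$, I reduce to the following assertion: if $\gcd(n_1',\ldots,n_m')=1$, then there is a $T'$ such that every integer $\geq T'$ is a nonnegative integer combination of $n_1',\ldots,n_m'$. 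Indeed, given this, $T:=kT'$ works for the original family, since $l\geq T'$ yields $l=\sum_i t_in_i'$ with $t_i\geq 0$, whence $kl=\sum_i t_in_i$. So I now assume $\gcd(n_1,\ldots,n_m)=1$ with all $n_i>0$, and (relabelling) single out $n_1$.

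By B\'ezout's identity there are integers $a_1,\ldots,a_m$ with $\sum_i a_in_i=1$. Separating the positive and negative coefficients, I set $A:=\sum_{a_i>0}a_in_i$ and $B:=\sum_{a_i<0}(-a_i)n_i$, so that $A$ and $B$ are nonnegative integer combinations of the $n_i$ and $A=B+1$. The key step is to exhibit a block of $n_1$ consecutive integers, each of which is a nonnegative integer combination of the $n_i$: for $j=0,1,\ldots,n_1-1$ one has
\[ jA+(n_1-1-j)B = j(B+1)+(n_1-1-j)B = (n_1-1)B+j, \]
and the left-hand side, being a sum of $j$ copies of the combination $A$ and $n_1-1-j$ copies of the combination $B$, is again a nonnegative integer combination of the $n_i$. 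Hence every integer in $\{M,M+1,\ldots,M+n_1-1\}$, where $M:=(n_1-1)B$, is representable.

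Finally I put $T:=M$ and let $x\geq T$ be arbitrary. Euclidean division of $x-M\geq 0$ by $n_1$ gives $x-M=qn_1+s$ with $q\geq 0$ and $0\leq s\leq n_1-1$, so $x=(M+s)+qn_1$; here $M+s$ is one of the representable integers produced above, and $qn_1$ contributes $q$ further copies of $n_1$, so $x$ is a nonnegative integer combination of the $n_i$. This proves the reduced assertion, and hence the lemma. There is no genuine obstacle: this is a classical fact (essentially Schur's theorem on numerical semigroups, i.e.\ the Frobenius coin problem, and precisely \cite[Lemma 3.4.2]{BR}). The only points needing a little care are the reduction to $\gcd=1$, the bookkeeping in splitting the B\'ezout relation into its positive and negative parts, and checking the degenerate cases ($B=0$, $m=1$, presence of zero entries), all of which are routine.
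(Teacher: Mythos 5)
Your proof is correct. The paper does not actually prove this lemma; it is invoked as a known number-theoretic fact and cited to Brualdi--Ryser (Lemma 3.4.2 of \cite{BR}), so there is no in-paper argument to compare against. The argument you give (reduce to $\gcd=1$, split a B\'ezout relation $\sum a_in_i=1$ into its positive and negative halves $A$ and $B$ with $A-B=1$, produce the run of $n_1$ consecutive representable integers $(n_1-1)B,\ldots,(n_1-1)B+n_1-1$ via $jA+(n_1-1-j)B$, and then shift by multiples of $n_1$) is precisely the standard textbook proof of this Schur/Frobenius representability bound, and your handling of the degenerate cases ($B=0$, zero entries, $m=1$) is sound.
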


\subsection{Diagonal similarity and visualization}
\label{ss:viz}

For any $x\in\Rpn$, we can define $X=\diag(x)$ as the {\em diagonal
matrix} whose diagonal entries are equal to the corresponding
entries of $x$, and whose off-diagonal entries are zero. If $x$ does
not have zero components, the diagonal similarity scaling $A\mapsto
X^{-1}AX$ does not change the weights of cycles and eigenvalues
(both nonnegative and max); if $z$ is an eigenvector of $X^{-1}AX$
then $Xz$ is an eigenvector of $A$ with the same eigenvalue. This
scaling does not change the critical graph
$\crit(A)=(N_c(A),E_c(A))$. Observe that $(X^{-1}AX)^{\otimes
k}=X^{-1}A^{\otimes k}X$, also showing that the periodicity
thresholds of max-algebraic matrix powers (Theorems~\ref{t:Cycl1}
and~\ref{t:nacht})
do not change after scaling. Of course, we also have
$(X^{-1}AX)^{\times k}=X^{-1}A^{\times k}X$ in nonnegative algebra.
The technique of nonnegative scaling can be traced back to the works
of Fiedler-Pt\'ak~\cite{FP-67}.

When working with the max-algebraic matrix powers, it is often
convenient to ``visualize'' the powers of the critical graph. Let
$A$ have $\lambda(A)=1$. A diagonal similarity scalling $A\mapsto
X^{-1}AX$ is called a {\em strict visualization
scaling}~\cite{But:10,SSB} if the matrix $B=X^{-1}AX$ has
$b_{ij}\leq 1$, and moreover, $b_{ij}=1$ if and only if $(i,j)\in
E_c(A)(=E_c(B))$. Any matrix $B$ satisfying these properties is
called {\em strictly visualized}.

\begin{theorem}[Strict Visualization~\cite{But:10,SSB}]
\label{t:strictvis} For each $A\in\Rpnn$ with $\rhomax(A)=1$ {\rm
(}that is, $\lambda(A)=1${\rm )}, there exists a strict
visualization scaling.
\end{theorem}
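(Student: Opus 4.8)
The plan is to obtain the required scaling as a composition $X=X^{(1)}X^{(2)}$ of two positive diagonal similarities: the first achieves a (possibly non-strict) visualization in which all critical edges already carry weight $1$, and the second rescales the node scalars so that the remaining weight-$1$ edges drop strictly below $1$ while the critical edges stay put.

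In the first step, since $\rhomax(A)=1\le 1$ the Kleene star $A^*=I\oplus A\oplus\cdots\oplus A^{\otimes(n-1)}$ is defined and $A^*\ge I$, so $a^*_{ii}\ge 1$ for all $i$. Taking any strictly positive $u$ and $x^{(1)}:=A^*\otimes u$ gives a strictly positive vector with $A\otimes x^{(1)}\le x^{(1)}$ (because $A\otimes A^*\le A^*$). Hence $B:=(X^{(1)})^{-1}AX^{(1)}$, $X^{(1)}=\diag(x^{(1)})$, has all entries $\le 1$. A positive diagonal similarity leaves cycle weights and the critical graph unchanged, so $\lambda(B)=1$ and $\crit(B)=\crit(A)$; and since all edge weights of $B$ are $\le 1$ while the product of them along any critical cycle equals $1$, every critical edge of $B$ has weight exactly $1$. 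Thus $B$ is visualized, but possibly with extra weight-$1$ edges outside $E_c(A)$.

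In the second step, let $\digr_1$ be the subgraph of $\digr(B)$ formed by the edges of $B$ of weight exactly $1$. As $\lambda(B)=1$ and all weights are $\le 1$, a cycle of $B$ has weight $1$ iff it lies entirely in $\digr_1$; consequently $E_c(B)$ is exactly the set of edges of $\digr_1$ joining two nodes of one nontrivial strongly connected component of $\digr_1$, and both endpoints of any non-critical edge of $\digr_1$ lie in distinct components. Contract the components of $\digr_1$; the resulting condensation is acyclic, so there is a function $\ell$, constant on each component, that strictly decreases along every edge of $\digr_1$ between distinct components. Put $x^{(2)}_i:=\delta^{\ell(i)}$, $X^{(2)}=\diag(x^{(2)})$, for some $\delta>1$ chosen close enough to $1$ that $\delta^{L}b_{ij}<1$ for every edge with $b_{ij}<1$, where $L:=\max_{i,j}|\ell(i)-\ell(j)|$ (possible, since there are only finitely many edges). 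Then $B':=(X^{(2)})^{-1}BX^{(2)}$ has $b'_{ij}=\delta^{\ell(j)-\ell(i)}b_{ij}$; a critical edge keeps weight $1$ (equal levels), a non-critical weight-$1$ edge becomes $\delta^{\ell(j)-\ell(i)}<1$ (strictly decreasing levels), and an edge with $0<b_{ij}<1$ stays $\le\delta^{L}b_{ij}<1$. Hence $b'_{ij}\le 1$ everywhere, with equality precisely on $E_c(B')=E_c(A)$, so $B'$ is strictly visualized and $X:=X^{(1)}X^{(2)}$ is a strict visualization scaling.

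The one substantive point is the structural claim in the second step: that after the preliminary visualization the non-critical weight-$1$ edges are exactly the edges of $\digr_1$ running between different strongly connected components of $\digr_1$, so they can all be removed at once by a single level-spreading scaling. The companion remark that one $\delta$ near $1$ simultaneously handles all of the finitely many strictly sub-$1$ entries, and the fact that a product of positive diagonal similarities is again one, are routine.
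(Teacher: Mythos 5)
Your proof is correct. The paper states Theorem~\ref{t:strictvis} by citation only, without an in-paper proof; the argument in~\cite{SSB} obtains the scaling vector in a single step from the Kleene star (via an arithmetic combination of its columns), reading strictness off the structure of $A^*$ directly. Your two-stage route is genuinely different and more elementary: scaling by any $x^{(1)}=A^*\otimes u$ with $u>0$ gives a (possibly non-strict) visualization because $A\otimes A^*\le A^*$ when $\rhomax(A)=1$; then scaling by $x^{(2)}_i=\delta^{\ell(i)}$ for a level function $\ell$ that is constant on, and strictly decreasing between, the strongly connected components of the weight-one subgraph $\digr_1$ removes the spurious weight-one edges while leaving the critical ones untouched, with $\delta>1$ taken close enough to $1$ that no weight below $1$ is pushed up to $1$. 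The combinatorial heart of your argument — that $E_c(B)$ is exactly the set of intra-component edges of $\digr_1$, so that the condensation of $\digr_1$ is acyclic and admits such an $\ell$ — is correctly identified and correctly justified. The trade-off is a conceptually transparent two-stage construction separating the analytic step (subeigenvector existence from the Kleene star) from the combinatorial step (a potential on the condensation DAG), in place of a single explicit formula; both routes are sound.
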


If $A=(a_{ij})$ has all entries $a_{ij}\leq 1$, then we define the
Boolean matrix $A^{[1]}$ with entries
\begin{equation}
\label{def:A1}
a_{ij}^{[1]}=
\begin{cases}
1, &\text{if $a_{ij}=1$},\\
0, &\text{if $a_{ij}<1$}.
\end{cases}
\end{equation}
If $A$ has all entries $a_{ij}\leq 1$ then
\begin{equation}
\label{e:unimatrix}
(A^{\otimes k})^{[1]}=(A^{[1]})^{\otimes k}.
\end{equation}
Similarly if a vector $x\in\Rpn$ has $x_i\leq 1$, we define $x^{[1]}$ having
$x^{[1]}_i=1$ if $x_i=1$ and $x^{[1]}_i=0$ otherwise. Obviously if $A$ and $x$ have all entries not exceeding
$1$ then $(A\otimes x)^{[1]}=A^{[1]}\otimes x^{[1]}$. 

If $A$ is strictly visualized, then $a_{ij}^{[1]}=1$ if and only if
$(i,j)$ is a critical edge of $\digr(A)$. Thus $A^{[1]}$ can be
treated as the associated matrix of $\crit(A)$ (disregarding the
formal difference in dimension). We now show that $\crit(A^{\otimes
k})=\crit(A)^k$ and that any power of a strictly visualized matrix
is strictly visualized.


\begin{lemma}[cf.~\cite{CGB-07}, {\cite[Prop.~3.3]{Ser-09}}]
\label{l:CAk}
Let $A\in\Rpnn$ and $k\geq 1$.
\begin{itemize}
\item[{\rm (i)}]  $\crit(A)^k=\crit(A^{\otimes k})$.
\item[{\rm (ii)}] If $A$ is strictly visualized, then so is $A^{\otimes k}$.
\end{itemize}
\end{lemma}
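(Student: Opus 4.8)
The plan is to reduce to the case where $A$ is \emph{strictly visualized}, and then recognize $\crit(A^{\otimes k})$ as the graph power $\crit(A)^k$ by means of the identity~\eqref{e:unimatrix} between Boolean matrix powers and graph powers.

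\emph{Reduction.} First I would dispose of the degenerate case $\lambda(A)=0$: then $\digr(A)$, and hence $\digr(A^{\otimes k})$, has no cycle, so $\crit(A)^k$ and $\crit(A^{\otimes k})$ are both empty and (i) is trivial, while (ii) cannot occur since a strictly visualized matrix has $\lambda=1$. When $\lambda(A)>0$, replacing $A$ by $A/\lambda(A)$ rescales every cycle geometric mean by one common factor and replaces $A^{\otimes k}$ by $A^{\otimes k}/\lambda(A^{\otimes k})$, so it changes none of $\crit(A)$, $\crit(A)^k$, $\crit(A^{\otimes k})$; hence I may assume $\lambda(A)=1$. Then by Theorem~\ref{t:strictvis} there is a positive diagonal $X$ with $B=X^{-1}AX$ strictly visualized; since diagonal similarity preserves the critical graph and $B^{\otimes k}=X^{-1}A^{\otimes k}X$, assertion (i) for $B$ is equivalent to assertion (i) for $A$, and proving (ii) for every strictly visualized matrix covers the given $A$. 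So I would assume henceforth that $A$ is strictly visualized.

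\emph{Boolean translation.} Now all entries of $A$, hence of all $A^{\otimes k}$, lie in $[0,1]$; $\lambda(A^{\otimes k})=\lambda(A)^k=1$; and $a_{ij}=1\iff(i,j)\in E_c(A)$, so $A^{[1]}$ is the adjacency matrix of $\crit(A)$. By~\eqref{e:unimatrix} we have $(A^{\otimes k})^{[1]}=(A^{[1]})^{\otimes k}$, and because Boolean matrix powers correspond to graph powers this reads $\digr\big((A^{\otimes k})^{[1]}\big)=\crit(A)^k$; equivalently the $1$-entries of $A^{\otimes k}$ are exactly the edges of $\crit(A)^k$. The crux is then the following observation, which I would state and prove for any matrix $M$ with entries in $[0,1]$ and $\lambda(M)=1$: \emph{$M$ is strictly visualized if and only if $\digr(M^{[1]})$ is a disjoint union of nontrivial strongly connected components, no two of which are joined by an edge.} This holds because for such $M$ a cycle of $\digr(M)$ is critical iff its weight is $1$ iff all its edges have weight $1$ iff it is a cycle of $\digr(M^{[1]})$; hence $E_c(M)$ is precisely the set of edges of $\digr(M^{[1]})$ lying on a cycle, and $N_c(M)$ the set of their endpoints, and both implications follow.

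\emph{Conclusion.} It remains to check that $\digr\big((A^{\otimes k})^{[1]}\big)=\crit(A)^k$ has the structure required by the observation. Writing $\crit(A)=\bigsqcup_\mu C_\mu$ for the decomposition into nontrivial strongly connected components with no edges between distinct ones, a path of $\crit(A)$ stays inside one $C_\mu$, so $\crit(A)^k=\bigsqcup_\mu C_\mu^{\,k}$ with no edges between the pieces, and by Theorem~\ref{t:brualdi}(i) each $C_\mu^{\,k}$ is itself a disjoint union of $\gcd(k,\sigma_\mu)$ nontrivial strongly connected components with no edges between them ($\sigma_\mu$ being the cyclicity of $C_\mu$). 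Thus $\crit(A)^k$ is of the required form, and the observation applied to $M=A^{\otimes k}$ yields at once that $A^{\otimes k}$ is strictly visualized (which is (ii)) and that $\crit(A^{\otimes k})=\digr\big((A^{\otimes k})^{[1]}\big)=\crit(A)^k$ (which is (i)); the vertex sets match on the nose because each $C_\mu^{\,k}$ has the same vertices as $C_\mu$ and no isolated ones, so $\crit(A)^k$ has vertex set $N_c(A)$, which equals $N_c(A^{\otimes k})$, the set of endpoints of $E_c(A^{\otimes k})=E(\crit(A)^k)$. The genuinely substantive steps are the equivalence in the Boolean translation and, especially, the decomposition of the graph power $\crit(A)^k$, which is exactly where Theorem~\ref{t:brualdi} is indispensable; the remainder is routine bookkeeping with the scaling reduction and~\eqref{e:unimatrix}. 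The only real pitfall I foresee is keeping vertex sets and edge sets straight when treating $A^{[1]}$ as the adjacency matrix of $\crit(A)$ ``disregarding the formal difference in dimension'', which is handled as indicated above.
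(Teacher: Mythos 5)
Your proof is correct and follows essentially the same route as the paper's: reduce to a strictly visualized $A$ via Theorem~\ref{t:strictvis}, use the identity $(A^{\otimes k})^{[1]}=(A^{[1]})^{\otimes k}$ from~\eqref{e:unimatrix} to identify the $1$-entries of $A^{\otimes k}$ with $\crit(A)^k$, and invoke Theorem~\ref{t:brualdi} to show $\crit(A)^k$ decomposes into isolated nontrivial strongly connected components; the fact that entries $\leq 1$ together with cycle weight $1$ forces all edges on that cycle to have weight $1$ does the rest. The only real difference is organizational: you extract the reusable characterization ``a matrix $M$ with entries in $[0,1]$ and $\lambda(M)=1$ is strictly visualized iff every edge of $\digr(M^{[1]})$ lies on a cycle of $\digr(M^{[1]})$'' as a standalone observation and apply it once to $M=A^{\otimes k}$ to get both (i) and (ii) simultaneously, whereas the paper proves the two edge-set inclusions $E(\crit(A)^k)\subseteq E_c(A^{\otimes k})$ and $E_c(A^{\otimes k})\subseteq E(\crit(A)^k)$ directly in sequence (obtaining $\lambda(A^{\otimes k})=1$ as a by-product rather than asserting it). Your abstraction is cleaner and perhaps more reusable; the paper's version is more self-contained about $\lambda(A^{\otimes k})=1$, which you assert without proof but which is standard and also falls out of your own chain of equivalences. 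Both are sound.
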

\begin{proof}
Using Theorem~\ref{t:strictvis}, we can assume without loss of
generality that $A$ is strictly visualized. Also note that both in
$\crit(A^{\otimes k})$ and in $\crit(A)^k$, each node has ingoing
and outgoing edges, hence for part (i) it suffices to prove that the
two graphs have the same set of edges.

Applying Theorem~\ref{t:brualdi} (i) to every component of
$\crit(A)$, we obtain that $\crit(A)^k$ also consists of several
isolated nontrivial strongly connected graphs. In particular, each
edge of $\crit(A)^k$ lies on a cycle, so $\crit(A)^k$ contains
cycles. Observe that $\digr(A^{\otimes k})$ does not have edges with
weight greater than $1$, while all edges of $\crit(A)^k$ have weight
$1$, hence all cycles of $\crit(A)^k$ have weight $1$. As
$\crit(A)^k$ is a subgraph of $\digr(A^{\otimes k})$, this shows
that $\rhomax(A^{\otimes k})=\lambda(A^{\otimes k})=1$ and that all
cycles of $\crit(A)^k$ are critical cycles of $\digr(A^{\otimes
k})$. Since each edge of $\crit(A)^k$ lies on a critical cycle, all
edges of $\crit(A)^k$ are critical edges of $\digr(A^{\otimes k})$.

$\digr(A^{\otimes k})$ does not have edges with weight greater than
$1$, hence every edge of $\crit(A^{\otimes k})$ has weight $1$.
Equation~\eqref{e:unimatrix} implies that if $a_{ij}^{\otimes k}=1$
then there is a path from $i$ to $j$ composed of the edges with
weight $1$. Since $A$ is strictly visualized, such edges are
critical. This shows that if $a_{ij}^{\otimes k}=1$ and in
particular if $(i,j)$ is an edge of $\crit(A^{\otimes k})$, then
$(i,j)$ is an edge of $\crit(A)^k$.  Hence $A^{\otimes k}$ is
strictly visualized, and all edges of $\crit(A^{\otimes k})$ are
edges of $\crit(A)^k$.

Thus $\crit(A^{\otimes k})$ and $\crit(A)^k$ have the same set of
edges, so $\crit(A^{\otimes k})=\crit(A)^k$ (and we also showed that
$A^{\otimes k}$ is strictly visualized).
\end{proof}

Let $T(\crit(A))$ be the greatest periodicity threshold of the
strongly connected components of $\crit(A)$. The following corollary
of Lemma~\ref{l:CAk} will be required in Section~\ref{s:eigencones}.

\begin{corollary}
\label{TcaTcrit} Let $A\in\Rpnn$. Then $\Tc(A)\geq T(\crit(A))$.
\end{corollary}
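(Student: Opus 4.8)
The plan is to reduce to the strictly visualized case and then to read off the Boolean powers of the critical components from the sequence $\{(A^{\otimes k})^{[1]}\}_{k\ge 1}$. Since $\Tc(A)$ is defined through the sequence $\{(A/\rho)^{\otimes k}\}$ with $\rho=\rhomax(A)$, and since $\crit(A/\rho)=\crit(A)$, we may replace $A$ by $A/\rho$ and assume $\lambda(A)=1$ (the case $\crit(A)=\varnothing$ being vacuous on both sides). A strict visualization scaling $A\mapsto X^{-1}AX$ (Theorem~\ref{t:strictvis}) changes neither $\Tc(A)$ — periodicity thresholds of max-algebraic powers are invariant under diagonal similarity, as $(X^{-1}AX)^{\otimes k}=X^{-1}A^{\otimes k}X$ — nor $T(\crit(A))$, since the critical graph is unchanged. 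So we may assume $A$ is strictly visualized; then every $A^{\otimes k}$ has all entries $\le 1$, the Boolean matrix $A^{[1]}$ is the matrix associated with $\crit(A)$, and $(A^{\otimes k})^{[1]}=(A^{[1]})^{\otimes k}$ by~\eqref{e:unimatrix}.

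Next, choose a strongly connected component $\digr_0$ of $\crit(A)$ with $T(\digr_0)=T(\crit(A))$, let $N_0$ be its node set and $B_0$ its associated Boolean matrix. Since $\crit(A)=\digr(A^{[1]})$ has no edges joining distinct components (and critical edges only join critical nodes), no edge of $\digr(A^{[1]})$ leaves $N_0$; hence for $i\in N_0$ every path of $\digr(A^{[1]})$ from $i$ stays in $N_0$, so the row $(A^{[1]})^{\otimes k}_{i\cdot}$ is supported on $N_0$ and its restriction to $N_0$ equals $(B_0^{\otimes k})_{i\cdot}$. Therefore the periodicity threshold of $\{(A^{[1]})^{\otimes k}_{i\cdot}\}_{k\ge1}$ equals that of $\{(B_0^{\otimes k})_{i\cdot}\}_{k\ge1}$ for each $i\in N_0$. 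Using that the periodicity threshold of a matrix sequence is the maximum of the periodicity thresholds of its rows, together with the correspondence $\digr_0^k\leftrightarrow B_0^{\otimes k}$ and Theorem~\ref{t:BoolCycl}, there is a node $i_0\in N_0\subseteq N_c(A)$ for which the periodicity threshold of $\{(A^{\otimes k})^{[1]}_{i_0\cdot}\}_{k\ge1}=\{(A^{[1]})^{\otimes k}_{i_0\cdot}\}_{k\ge1}$ equals $T(\digr_0)=T(\crit(A))$.

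Finally we transfer this back to $A$. The sequence $\{A^{\otimes k}_{i_0\cdot}\}_{k\ge1}$ is ultimately periodic by Theorem~\ref{t:nacht} (here $i_0\in N_c(A)$ and $\lambda(A)=1$), and applying the entrywise map $x\mapsto x^{[1]}$ to a periodic tail yields a periodic tail; hence the periodicity threshold of $\{(A^{\otimes k})^{[1]}_{i_0\cdot}\}$ is at most that of $\{A^{\otimes k}_{i_0\cdot}\}$. Since, by the definition of $\Tc(A)$ in Theorem~\ref{t:nacht}, $\Tc(A)$ is at least the periodicity threshold of $\{A^{\otimes k}_{i_0\cdot}\}$, we conclude
\[
\Tc(A)\ \ge\ \text{(threshold of }\{A^{\otimes k}_{i_0\cdot}\})\ \ge\ \text{(threshold of }\{(A^{\otimes k})^{[1]}_{i_0\cdot}\})\ =\ T(\crit(A)).
\]
The only delicate point is the bookkeeping in the middle step: one must use that $\crit(A)$ splits into mutually non-accessing components, so that a critical row of $(A^{[1]})^{\otimes k}$ genuinely is a row of a power of the corresponding component matrix, and one must check that passing through the non-invertible map $(\cdot)^{[1]}$ can only decrease periodicity thresholds, so the chain of inequalities points the right way.
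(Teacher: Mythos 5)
Your proof is correct and takes essentially the same route as the paper's: reduce to the strictly visualized case via Theorem~\ref{t:strictvis}, use Lemma~\ref{l:CAk} together with equation~\eqref{e:unimatrix} to identify powers of $\crit(A)$ with the booleanizations of critical rows of $A^{\otimes k}$, and note that booleanizing a periodic tail keeps it periodic, so the Boolean threshold cannot exceed the threshold of the underlying critical rows. The only cosmetic difference is that you track a single critical row in an extremal component, whereas the paper's argument propagates ultimate periodicity of all critical columns at once to ultimate periodicity of $(\crit(A))^k$.
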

\if{
\begin{proof}
Let $\sigma$ be the cyclicity of $\crit(A)$. Assume that $A$ is strictly visualized.
We are going to show that if $(A^k)_{\cdot i}=A^{k+\sigma}_{\cdot i}$ for all $i\in N_c(A)$
then $(\crit(A))^k=(\crit(A))^{k+\sigma}$. Indeed, Lemma~\ref{l:CAk} implies that $(\crit(A))^k=(\crit(A))^{k+\sigma}$ is
equivalent to $(A^[1])^{\otimes k}= (A^[1])^{\otimes (k+\sigma)}$ and that all nonzero entries of
these matrices are in the critical columns and rows. As $(A^k)_{\cdot i}=A^{k+\sigma}_{\cdot i}$ for all $i\in N_c(A)$
is sufficient for $(A^[1])^{\otimes k}= (A^[1])^{\otimes (k+\sigma)}$, the claim follows.
\end{proof}
}\fi

\subsection{Frobenius normal form}

Every matrix $A=(a_{ij})\in \Rp^{n\times n}$ can be transformed by
simultaneous permutations of the rows and columns in almost linear
time to
a {\em Frobenius normal form}~\cite{BP,BR} 
\begin{equation}
\left(
\begin{array}{cccc}
A_{11} & 0 & ... & 0 \\
A_{21} & A_{22} & ... & 0 \\
... & ... & A_{\mu\mu} & ... \\
A_{r1} & A_{r2} & ... & A_{rr}%
\end{array}%
\right) ,  \label{fnf}
\end{equation}%
where $A_{11},...,A_{rr}$ are irreducible square submatrices of $A$.
They correspond to the sets of nodes $N_1,\ldots,N_r$ of the
strongly connected components of $\digr(A)$. Note that
in~\eqref{fnf} an edge from a node of $N_{\mu}$ to a node of
$N_{\nu}$ in $\digr(A)$ may exist only if $\mu\geq \nu.$

Generally, $A_{KL}$ denotes the submatrix of $A$ extracted from the
rows with indices in $K\subseteq \{1,\ldots,n\}$ and columns with
indices in $L\subseteq \{1,\ldots,n\}$, and $A_{\mu\nu}$ is a
shorthand for $A_{N_{\mu}N_{\nu}}$. Accordingly, the subvector
$x_{N_{\mu}}$ of $x$ with indices in $N_{\mu}$ will be written as
$x_{\mu}$.

If $A$ is in the Frobenius Normal Form \eqref{fnf} then the {\em
reduced graph}, denoted by $R(A)$, is the (di)graph whose nodes
correspond to $N_{\mu}$, for $\mu=1,\ldots,r$, and the set of arcs
is $ \{(\mu,\nu);(\exists k\in N_{\mu})(\exists \ell \in
N_{\nu})a_{k\ell }>0\}. $ In max algebra and in nonnegative algebra,
the nodes of $R(A)$ are {\em marked} by the corresponding
eigenvalues (Perron roots), denoted by
$\rhomax_{\mu}:=\rhomax(A_{\mu\mu})$ (max algebra),
$\rhonon_{\mu}:=\rhonon(A_{\mu\mu})$ (nonnegative algebra), and by
$\rho_{\mu}$ when both algebras are considered simultaneously.

By a {\em class} of $A$ we mean a node $\mu$ of the reduced graph
$R(A)$. It will be convenient to attribute to class $\mu$ the node
set and the edge set of $\digr(A_{\mu\mu})$, as well as the
cyclicity and other parameters, that is, we will say ``nodes of
$\mu$'', ``edges of $\mu$'', ``cyclicity of $\mu$'',
etc.\footnote{The sets $N_{\mu}$ are also called classes, in the
literature. To avoid the confusion, we do not follow this in the
present paper.}

A class $\mu$ is trivial if $A_{\mu\mu}$ is the $1\times 1$ zero matrix.
Class $\mu$ {\em accesses} class
$\nu$, denoted $\mu\to\nu$, if $\mu=\nu$ or if there exists a $\mutonu$ path in $R(A)$.
A class is called {\em initial}, resp. {\em final}, if it is not accessed by, resp. if it does not
access, any other class. Node $i$ of $\digr(A)$ accesses class $\nu$, denoted by $i\to\nu$, if $i$ belongs to a class $\mu$ such that $\mu\to\nu$.

Note that simultaneous permutations of the rows and columns of $A$
are equivalent to calculating $P^{-1}AP,$ where $P$ is a permutation
matrix.  Such transformations do not change the eigenvalues, and the
eigenvectors before and after such a transformation may only differ
by the order of their components. Hence we will assume without loss
of generality that $A$ is in Frobenius normal form~(\ref{fnf}). Note
that a permutation bringing matrix to this form is (relatively) easy
to find~\cite{BR}. We will refer to the transformation $A\mapsto
P^{-1}AP$ as {\em permutational similarity}.

\subsection{Elements of the Perron-Frobenius theory}
\label{ss:pfelts}

In this section we recall the spectral theory of reducible matrices in
max algebra and in nonnegative linear algebra.
All results are standard: the
nonnegative part goes back to Frobenius~\cite{Fro-1912}, Sect.~11,
and the max-algebraic counterpart is due to Gaubert~\cite{Gau:92},
Ch.~IV (also see~\cite{But:10} for other references).

A class $\nu$ of $A$ is called a {\em spectral class} of $A$
associated with eigenvalue $\rho\neq 0$, or sometimes
$(A,\rho)$-spectral class for short, if
\begin{equation}
\label{e:speclass}
\begin{split}
&\rhomax_{\nu}=\rho,\ \text{and}\ \mu\to\nu\ \text{implies}\ \rhomax_{\mu}\leq\rho^{\oplus}_{\nu}\ \text{(max algebra)},\\
&\rhonon_{\nu}=\rho,\ \text{and}\ \mu\to\nu,\mu\neq\nu\ \text{implies}\ \rho^{+}_{\mu}<\rho^{+}_{\nu}\ \text{(nonnegative algebra)}.
\end{split}
\end{equation}
In both algebras, note that there may be several spectral classes
associated with the same eigenvalue.

In nonnegative algebra, spectral classes are called distinguished
classes~\cite{Sch-86}, and there are also semi-distinguished classes
associated with distinguished generalized eigenvectors of order two
or more~\cite{TS-00}. However, these vectors are not contained in
the core\footnote{For a polyhedral cone, the core of the
cone-preserving map does not contain generalized eigenvectors of
order two or more~{\cite[Corollary 4.3]{TS-94}}.}. Also, no suitable
max-algebraic analogue of generalized eigenvectors is known to us.


If all classes of $A$ consist of just one element, then the
nonnegative and max-algebraic Perron roots are the same. In this
case, the spectral classes in nonnegative algebra are also spectral
in max algebra. However, in general this is not so. In particular,
for a nonnegative matrix $A$, a cycle of $\digr(A)$ attaining the
maximum cycle geometric mean $\rhomax(A)=\lambda(A)$ need not lie in
a strongly connected component corresponding to a class with
spectral radius $\rhonon(A)$. This is because, if $A_1$, $A_2$ are
irreducible nonnegative matrices such that
$\rhonon(A_1)<\rhonon(A_2)$, then we need not have
$\rhomax(A_1)<\rhomax(A_2)$. For example, let $A$ be the $3\times 3$
matrix of all $1$'s, and let
$B(\epsilon)=(3/2,\epsilon,\epsilon)^T(3/2,\epsilon,\epsilon)$. Then
$\rhonon(A)=3$, $\rhonon(B(\epsilon))=9/4+2\epsilon^2$, so
$\rhonon(A)>\rhonon(B(\epsilon))$ for sufficiently small
$\epsilon>0$, but $\rhomax(B(\epsilon))=9/4>1=\rhomax(A)$.

Denote by $\Lnon(A)$, resp. $\Lmax(A)$, the set of {\bf nonzero}
eigenvalues of $A\in\Rpnn$ in nonnegative linear algebra, resp. in
max algebra. It will be denoted by $\Lambda(A)$ when both algebras
are considered simultaneously, as in the following standard
description.

\begin{theorem}[{\cite[Th.~4.5.4]{But:10}}, {\cite[Th.~3.7]{Sch-86}}]
\label{t:spectrum} Let $A\in\Rpnn$. Then
$\Lambda(A)=\{\rho_{\nu}\neq 0\colon  \nu\ \text{is spectral}\}$.
\end{theorem}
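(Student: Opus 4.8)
The plan is to prove the two inclusions $\Lambda(A)\subseteq\{\rho_\nu\ne 0\colon \nu\text{ spectral}\}$ and $\{\rho_\nu\ne 0\colon \nu\text{ spectral}\}\subseteq\Lambda(A)$ separately, working uniformly in both algebras where possible but keeping in mind that the two conditions in~\eqref{e:speclass} differ (weak versus strict inequality on accessing classes), so the arguments will branch. Throughout I assume $A$ is in Frobenius normal form~\eqref{fnf}, which is harmless by permutational similarity.

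**First, the easy inclusion** $\{\rho_\nu\ne 0\colon\nu\text{ spectral}\}\subseteq\Lambda(A)$: given a spectral class $\nu$ with $\rho_\nu\ne 0$, I construct an eigenvector supported on the classes accessing $\nu$. Scale so that $\rho_\nu=1$ (divide $A$ by $\rho_\nu$). Consider the principal submatrix $A_{MM}$ where $M=\bigcup_{\mu\to\nu}N_\mu$ is the set of nodes accessing $\nu$. By the spectral-class condition, every class $\mu\to\nu$ has $\rho_\mu\le 1$ (in max algebra) resp.\ $\rho_\mu<1$ for $\mu\ne\nu$ (in nonnegative algebra), so $\rho(A_{MM})=1$ attained exactly on $N_\nu$. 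In the max-algebraic case I take the relevant column(s) of the Kleene star $A_{MM}^*$ (which converges after the scaling, by the remarks following~\eqref{def:kleenestar}) indexed by a critical node of $\nu$; this is the standard construction of~\cite{But:10} and yields a max eigenvector of $A_{MM}$ for eigenvalue $1$, which extends by zeros on $N\setminus M$ to an eigenvector of $A$ since no node outside $M$ is accessed from $M$. In the nonnegative case I invoke the classical Frobenius--Victory construction (Theorem~\ref{t:FVnonneg}, available later in the paper, or directly~\cite{Sch-86,Fro-1912}): the strict inequality guarantees the spectral radius of $A_{MM}$ is a simple-enough eigenvalue with a nonnegative eigenvector positive exactly on $N_\nu$ and its dominated support. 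Either way $\rho_\nu\in\Lambda(A)$.

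**The harder inclusion** is $\Lambda(A)\subseteq\{\rho_\nu\ne 0\colon\nu\text{ spectral}\}$: suppose $\rho\ne 0$ is an eigenvalue with eigenvector $x\ge 0$, $x\ne 0$. Let $S=\supp(x)$ and pick a class $\nu$ that is minimal (lowest in the access order) among the classes meeting $S$; equivalently $N_\nu\cap S\ne\emptyset$ but no class $\mu$ with $\nu\to\mu$, $\mu\ne\nu$, meets $S$ (using that an edge from $N_\mu$ to $N_\sigma$ forces $\mu\ge\sigma$ in the normal form, i.e.\ forces $\sigma\to\mu$ or something accessible). Restricting the eigen-equation $A\otimes x=\rho x$ (resp.\ $A^{\times}x=\rho x$) to the rows in $N_\nu$: because of the block-triangular structure, only columns in classes accessed from $\nu$ contribute, and by minimality of $\nu$ those are columns in $N_\nu$ together with columns in classes not meeting $S$; the latter contribute $0$. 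Hence $A_{\nu\nu}\otimes x_\nu=\rho x_\nu$ (resp.\ $A_{\nu\nu}x_\nu=\rho x_\nu$) with $x_\nu\ge 0$, $x_\nu\ne 0$, so by irreducible Perron--Frobenius $\rho=\rho_\nu$. It remains to verify that $\nu$ is actually \emph{spectral}, i.e.\ the inequality in~\eqref{e:speclass} on classes $\mu\to\nu$: **this is the main obstacle.** For a class $\mu\to\nu$, I look at the rows in $N_\mu$ of the eigen-equation. In max algebra I need $\rho_\mu\le\rho$; this follows because a subeigenvector inequality $A_{\mu\mu}\otimes y\le\rho y$ with $y\ge x_\mu$ (obtained by dropping the off-diagonal contributions, which are nonnegative) together with positivity of the relevant coordinates of $x$ along the access path forces $\lambda(A_{\mu\mu})\le\rho$ — here one uses that along a path realizing $\mu\to\nu$ the coordinates of $x$ propagate, which needs a short argument that $x$ has full support on every class between $\mu$ and $\nu$, or at least on $N_\mu$. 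In nonnegative algebra the strict inequality $\rho_\mu<\rho$ for $\mu\ne\nu$ is the genuinely delicate point: it is exactly the classical fact that a distinguished class strictly dominates the classes above it, and I would reduce it to the standard result (Rothblum's index theorem / the Frobenius--Victory setup, \cite{Sch-86,RW-82}) rather than reprove it. I expect the write-up to lean on Theorems~\ref{t:FVnonneg} and~\ref{t:FVmaxalg} for the precise support structure of eigenvectors, citing them forward, since those give exactly the characterization that makes both the minimality argument and the strict-domination step clean.
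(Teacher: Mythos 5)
The paper does not actually prove Theorem~\ref{t:spectrum}: it is quoted from~\cite{But:10} and~\cite{Sch-86} as a background fact, the paper's own contribution being only to phrase it for both algebras at once. So your proposal has to be judged on its own. The two-inclusion plan is the right one, and the easy inclusion (a spectral class $\nu$ with $\rho_\nu\neq 0$ yields a nonnegative eigenvector) is essentially correct: after scaling $\rho_\nu=1$, build the eigenvector inside $M=\{i\colon i\to\nu\}$ and pad with zeros. One slip, though: what makes the zero-padding an eigenvector of $A$ is that $M$ is \emph{initial}, i.e.\ no node of $M$ is accessed from outside $M$ — so rows with indices outside $M$ see only zero entries of $x$ in their relevant columns. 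You wrote the converse, that no node outside $M$ is accessed from $M$; that is false in general (a node of $M$ may well have edges into classes that do not access $\nu$), and it is not the property the row computation needs.

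The genuine gap is exactly where you flagged it: showing that your chosen class $\nu$ (final among the classes meeting $S=\supp(x)$) is \emph{spectral}. The missing ingredient is the closure property of supports: if $\rho>0$, $Ax=\rho x$, $x\geq 0$, and $(i,j)$ is an edge with $x_j>0$, then $(Ax)_i\geq a_{ij}x_j>0$, so $x_i>0$; together with irreducibility of the diagonal blocks this shows $\supp(x)$ is a union of class node-sets, and that $\mu\to\lambda$ with $N_\lambda\subseteq\supp(x)$ forces $N_\mu\subseteq\supp(x)$. With that in hand, every $\mu$ with $\mu\to\nu$ has $x_\mu>0$; restricting the eigen-equation to the rows of $N_\mu$ and discarding the nonnegative off-block contributions gives $A_{\mu\mu}x_\mu\leq\rho x_\mu$ entrywise, hence $\rho_\mu\leq\rho$ in both algebras; and for $\mu\neq\nu$ the first arc of a path from $\mu$ to $\nu$ lands in a class contained in $\supp(x)$, so in nonnegative algebra the subinvariance is strict in at least one row, whence $\rho_\mu<\rho$, which is exactly what~\eqref{e:speclass} demands. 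Finally, your suggestion to cite Theorems~\ref{t:FVnonneg} and~\ref{t:FVmaxalg} forward to ``make the strict-domination step clean'' does not plug this gap: those theorems describe the \emph{principal} eigencone under the hypothesis $\rho(A)=1$, and passing from a general eigenvalue to the principal one via Proposition~\ref{p:vamrho} already presupposes that spectral classes account for $\Lambda(A)$ — precisely the inclusion you are trying to establish.
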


Theorem~\ref{t:spectrum} encodes the following {\bf two} statements:
\begin{equation}
\label{e:spectrum2}
\Lmax(A)=\{\rhomax_{\nu}\neq 0\colon  \nu\ \text{is spectral}\},\quad
\Lnon(A)=\{\rhonon_{\nu}\neq 0\colon  \nu\ \text{is spectral}\},
\end{equation}
where the notion of spectral class is defined in two different ways
by~\eqref{e:speclass}, in two algebras.

In both algebras, for each $\rho\in\Lambda(A)$ define 
\begin{equation}
\label{amrho}
\begin{split}
&\amrho:=\rho^{-1}
\begin{pmatrix}
0 & 0\\
0 & \amrhomrho
\end{pmatrix},\ \text{where}\\
& \mrho:=\{i\colon i\to\nu,\; \nu\ \text{is $(A,\rho)$-spectral}\}\,\enspace .
\end{split}
\end{equation}

By ``$\nu$ is $(A,\rho)$-spectral'' we mean that $\nu$ is a spectral
class of $A$ with $\rho_{\nu}=\rho$. The next proposition, holding
both in max algebra and in nonnegative algebra, allows us to reduce
the case of arbitrary eigencone to the case of principal eigencone.
Here we assume that $A$ is in Frobenius normal form.


\begin{proposition}[\cite{But:10,Gau:92,Sch-86}]
\label{p:vamrho}
For $A\in\Rpnn$ and each $\rho\in\Lambda(A)$, we have
$V(A,\rho)=V(\amrho,1)$, where $1$ is the principal eigenvalue
of $\amrho$.
\end{proposition}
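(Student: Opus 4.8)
The plan is to unravel the definitions and reduce everything to a statement about which coordinates of an eigenvector can be nonzero. Fix $A$ in Frobenius normal form and $\rho\in\Lambda(A)$. The key observation is that $\amrho$ is obtained from $A/\rho$ by zeroing out all rows and columns with indices \emph{not} in $\mrho$, where $\mrho=\{i : i\to\nu \text{ for some }(A,\rho)\text{-spectral }\nu\}$. So $\amrho$ has the block form in \eqref{amrho}, the nontrivial block being $(A/\rho)_{\mrho\mrho}$. First I would record that, by construction of $\mrho$ as a union of principal ideals generated by spectral classes, the set $\mrho$ is ``down-closed'' in the access order: if $i\in\mrho$ and $i\to j$ then $j\in\mrho$. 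Consequently $A_{\mrho\mrho}$ has no edges leaving $\mrho$, i.e. $(A^{\otimes k})_{ij}=0$ (resp. $(A^{\times k})_{ij}=0$) whenever $i\in\mrho$, $j\notin\mrho$, for every $k$; and all spectral classes of $A$ associated with $\rho$ lie inside $\mrho$, so the principal eigenvalue of $\amrhomrho=(A/\rho)_{\mrho\mrho}$ is $1$ (this uses Theorem~\ref{t:spectrum}: the classes of $A_{\mrho\mrho}$ with normalized Perron root $1$ are exactly the $(A,\rho)$-spectral classes of $A$, and they are spectral in $A_{\mrho\mrho}$ since the access restrictions in \eqref{e:speclass} are inherited from $A$).

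The inclusion $V(\amrho,1)\subseteq V(A,\rho)$ is the easy direction. Take $z$ an eigenvector of $\amrho$ for eigenvalue $1$. Because the rows of $\amrho$ indexed outside $\mrho$ are identically zero, the defining equation $\amrho z = z$ (in the appropriate algebra) forces $z_i=0$ for $i\notin\mrho$, while on the block $\mrho$ it reads $(A/\rho)_{\mrho\mrho}\, z_{\mrho}=z_{\mrho}$. Using the down-closedness of $\mrho$ (no edges from $\mrho$ to its complement), the full product $(A/\rho)z$ restricted to $\mrho$ equals $(A/\rho)_{\mrho\mrho}z_{\mrho}=z_{\mrho}$, and restricted to the complement it is a combination of the zero entries of $z$, hence zero. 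Thus $(A/\rho)z=z$, i.e. $Az=\rho z$, so $z\in V(A,\rho)$.

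For the reverse inclusion $V(A,\rho)\subseteq V(\amrho,1)$ — which I expect to be the main obstacle — take $x$ with $Ax=\rho x$, equivalently $(A/\rho)x=x$, and I must show $\supp(x)\subseteq\mrho$; then the same block computation as above shows $\amrho x=x$. This is the genuinely Perron--Frobenius part. The argument: if $i\in\supp(x)$, iterate $(A/\rho)^{\otimes k}x=x$ (resp. the nonnegative power) to see that $i$ must be reachable from some index $j\in\supp(x)$ by arbitrarily long walks in $\digr(A)$ whose $\rho$-normalized weight stays bounded below; pushing this down the finite access order, $i$ must access a class $\nu$ carrying a cycle of normalized weight $\geq 1$, hence (since $\rho=\rho(A)$ need not be the global maximum, one argues within the relevant principal-ideal structure) a class with $\rho_\nu=\rho$ satisfying the spectral access condition \eqref{e:speclass}. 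In the interest of not reproving standard material, I would instead simply cite this as the classical description of the eigencone support — it is exactly Proposition~\ref{p:vamrho} as stated in \cite{But:10,Gau:92,Sch-86} — and present the block manipulation above as the bookkeeping that turns ``$\supp(x)\subseteq\mrho$ and $(A/\rho)x=x$'' into ``$x\in V(\amrho,1)$'', noting that the max-algebraic and nonnegative cases run in parallel because in both the only facts used are: zero rows kill the corresponding coordinates, and absence of edges out of $\mrho$ decouples the block equation.
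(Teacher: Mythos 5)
The paper itself states Proposition~\ref{p:vamrho} without proof, attributing it to \cite{But:10,Gau:92,Sch-86}, so your strategy --- reduce the statement to a block computation plus the classical support description of eigenvectors --- is a reasonable match to what the paper intends. The bookkeeping part is worth spelling out and that is what you have done. However, there is a genuine error in the closedness claim, and as written it breaks the step it is invoked for.

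You assert that $\mrho$ satisfies: ``if $i\in\mrho$ and $i\to j$ then $j\in\mrho$,'' and consequently that $(A^k)_{ij}=0$ whenever $i\in\mrho$ and $j\notin\mrho$. This is false. A spectral class $\nu$ with $\rho_\nu=\rho$ may well access classes whose Perron root is smaller and which are not accessed by (and do not access) any other $(A,\rho)$-spectral class; nodes of such classes are reached from $\mrho$ but lie outside it. For a concrete instance, take three singleton classes with Perron roots $3,1,2$ and edges $3\to 2\to 1$; with $\rho=2$, the class $\{3\}$ is spectral and $\mrho=\{3\}$, yet $a_{32}\neq 0$ leaves $\mrho$. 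What is actually true --- and what your block computation requires --- is the opposite direction: if $j\in\mrho$ and $i\to j$ then $i\in\mrho$ (since $i\to j\to\nu$). Equivalently, $a_{ij}=0$ whenever $i\notin\mrho$ and $j\in\mrho$, i.e.\ no edges (and hence no paths) enter $\mrho$ from its complement. This is precisely the fact needed at the point where you say ``restricted to the complement it is a combination of the zero entries of $z$'': for $i\notin\mrho$ the sum $\sum_j (A/\rho)_{ij}z_j$ has its $j\notin\mrho$ terms vanish because $z_j=0$, and its $j\in\mrho$ terms vanish because $(A/\rho)_{ij}=0$ --- the latter needs the correct (``in-closed'') property, not the one you stated. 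The restriction to $\mrho$, on the other hand, needs no closedness at all: $z_j=0$ for $j\notin\mrho$ already gives $((A/\rho)z)_{\mrho}=(A/\rho)_{\mrho\mrho}z_{\mrho}$. With the direction of the access-closedness corrected (and the same fix propagated to the reverse inclusion, where one uses $\supp(x)\subseteq\mrho$ together with the same ``no incoming edges'' fact), the argument goes through. I would also flag that your justification for the principal eigenvalue of $\amrhomrho$ being $1$ implicitly uses that every class of $A$ contained in $\mrho$ has Perron root $\leq\rho$; that holds precisely because $\mu\in\mrho$ means $\mu\to\nu$ for some $(A,\rho)$-spectral $\nu$, whence $\rho_\mu\leq\rho_\nu=\rho$ by \eqref{e:speclass} --- worth stating explicitly.
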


For a parallel description of extremals of eigencones\footnote{In
nonnegative algebra, \cite[Th. 3.7]{Sch-86} immediately describes
both spectral classes and eigencones associated with any eigenvalue.
However, we prefer to split the formulation, following the
exposition of~\cite{But:10}. An alternative simultaneous exposition
of spectral theory in both algebras can be found in~\cite{KSS}.} in
both algebras see Section~\ref{ss:FV}.

In max algebra, using Proposition~\ref{p:vamrho}, we define the {\em
critical graph associated with} $\rho\in\Lmax(A)$ as the critical
graph of $A_{\rho}$. By a {\em critical component of $A$} we mean a
strongly connected component of the critical graph associated with
some $\rho\in\Lmax(A)$.
In max algebra, the role of spectral classes of $A$ is rather played by
these critical components, which will be (in analogy
with classes of Frobenius normal form) denoted by $\Tilde{\mu}$, with the node set $N_{\Tilde{\mu}}$.
See Section~\ref{ss:critcomp}.


\section{Notation table and key results}
\label{s:key}

The following notation table shows how various objects
are denoted in nonnegative algebra, max algebra and when both algebras are considered simultaneously.

\begin{equation*}
\begin{array}{cccc}
& \text{Nonnegative} & \text{Max} & \text{Both}\\
\text{Sum} & \sum & \bigoplus & \sum\\
\text{Matrix power} & A^{\times t} & A^{\otimes t} & A^t\\
\text{Column span}   & \spannon(A) & \spanmax(A) & \spann(A)\\
\text{Perron root} & \rhonon_{\mu} & \rhomax_{\nu} & \rho_{\mu}\\
\text{Spectrum (excl. $0$)} & \Lnon(A) & \Lmax(A) & \Lambda(A)\\
\text{Eigencone} & \vnon(A,\rhonon) & \vmax(A,\rhomax) & V(A,\rho)\\
\text{Sum of eigencones} & \vnon^{\Sigma}(A) & \vmax^{\Sigma}(A) & V^{\Sigma}(A)\\
\text{Core} & \corenon(A) & \coremax(A) & \core(A)
\end{array}
\end{equation*}
In the case of max algebra, we also have
the critical graph $\crit(A)$ (with related concepts and notation),
not used in nonnegative algebra.

The core and the sum of eigencones appearing in the table have not
been formally introduced. These are the two central notions of this
paper, and we now introduce them for both algebras simultaneously.

The {\em core} of a nonnegative matrix $A$ is defined as the
intersection of the column spans (in other words, images) of its
powers:
\begin{equation}
\label{def:core}
\core(A):=\cap_{i=1}^{\infty} \spann(A^i).
\end{equation}

The ({\em Minkowski}) {\em sum of eigencones} of a nonnegative
matrix $A$ is the cone consisting of all sums of vectors in all
$V(A,\rho)$:
\begin{equation}
\label{va-def}
V^{\Sigma}(A):=\sum_{\rho\in\Lambda(A)} V(A,\rho)
\end{equation}
If $\Lambda(A)=\emptyset$, which happens when $\rho(A)=0$, then we
assume that the sum on the right-hand side is $\{0\}$.

The following notations can be seen as the ``global'' definition of
cyclicity in nonnegative algebra and in max algebra.
\begin{itemize}
\item[1.] Let $\sigma_{\rho}$ be the
the lcm of all cyclicities of
spectral classes associated with $\rho\in\Lnon(A)$ ({\bf nonnegative algebra}),
or the cyclicity of critical graph associated with $\rho\in\Lmax(A)$ ({\bf max algebra}).
\item[2.] Let $\sigma_{\Lambda}$ be the lcm of all $\sigma_{\rho}$ where $\rho\in\Lambda(A)$.
\end{itemize}
The difference between the definitions of $\sigma_{\rho}$ in max
algebra and in nonnegative algebra comes from the corresponding
versions of Perron-Frobenius theory. In particular, let $A\in\Rpnn$
be an irreducible matrix. While in nonnegative algebra the eigencone
associated with the Perron root of $A$ is always reduced to a single
ray, the number of (appropriately normalized) extremals of the
eigencone of $A$ in max algebra is equal to the number of critical
components, so that there may be up to $n$ such extremals.


One of the key results of the present paper relates the core with
the sum of eigencones. The nonnegative part of this result can be
found in Tam-Schneider~\cite[Th.~4.2, part~(iii)]{TS-94}.
\begin{maintheorem}
\label{t:core}
Let $A\in\Rpnn$. Then
$$\core(A)=\sum_{k\geq 1,\rho\in\Lambda(A)} V(A^k,\rho^k) =V^{\Sigma}(A^{\sigma_{\Lambda}}).$$
\end{maintheorem}

The main part of the proof is given in Section~\ref{s:core}, for
both algebras simultaneously. However, this proof takes in
``credit'' some facts, which we will have to show. First of all, we
need the equality
\begin{equation}
\label{e:samespectrum}
\Lambda(A^k)=\{\rho^k\colon \rho\in\Lambda(A)\}.
\end{equation}
This simple relation between $\Lambda(A^k)$ and $\Lambda(A)$, which
can be seen as a special case of~\cite[Theorem 3.6(ii)]{KSS}, will
be also proved below as Corollary~\ref{c:samespectrum}.

To complete the proof of Main Theorem~\ref{t:core} we also have to
study the periodic sequence of eigencones of matrix powers and their
sums. On this way we obtain the following key result, both in max
and nonnegative algebra.

\begin{maintheorem}
\label{t:periodicity} Let $A\in\Rpnn$. Then
\begin{itemize}
\item[{\rm (i)}] $\sigma_{\rho}$, for $\rho\in\Lambda(A)$, is the period of the
sequence $\{V(A^k,\rho^k)\}_{k\geq 1}$, and
$V(A^k,\rho^k)\subseteq
V(A^{\sigma_{\rho}},\rho^{\sigma_{\rho}})$ for all $k\geq 1$;
\item[{\rm (ii)}] $\sigma_{\Lambda}$ is the period of the sequence
$\{V^{\Sigma}(A^k)\}_{k\geq 1}$, and $V^{\Sigma}(A^k)\subseteq V^{\Sigma}(A^{\sigma_{\Lambda}})$ for all $k\geq 1$.
\end{itemize}
\end{maintheorem}

Main Theorem~\ref{t:periodicity} is proved in
Section~\ref{s:eigencones} as a corollary of
Theorems~\ref{t:girls-major} and~\ref{t:girls-minor}, where the
inclusion relations between eigencones of matrix powers are studied
in detail.


Theorem~\ref{t:tam-schneider}, which
gives a detailed description
of extremals of both cores, can be also seen as a key result of this paper.
However, it is too long to be formulated in advance.

\section{Two cores}
\label{s:core}

\subsection{Basics}
\label{ss:basics} In this section we investigate the core of a
nonnegative matrix defined by~\eqref{def:core}. In the main
argument, we consider the cases of max algebra and nonnegative
algebra simultaneously.

One of the most elementary and useful properties of the intersection
in~\eqref{def:core} is that, actually,
\begin{equation}
\label{e:monotonic}
\spann(A)\supseteq\spann(A^2)\supseteq\spann(A^3)\supseteq\ldots
\end{equation}

Generalizing an argument of Pullman~\cite{Pul-71} we will prove that
\begin{equation}
\label{e:maingoal}
\core(A)=\sum_{k\geq 1} V^{\Sigma}(A^k)=\sum_{k\geq 1,\rho\in\Lambda(A)} V(A^k,\rho^k)
\end{equation}
also in max algebra.

Note that the following inclusion is almost immediate.

\begin{lemma}
\label{l:natural}
$\sum_{k\geq 1} V^{\Sigma}(A^k)\subseteq\core (A)$.
\end{lemma}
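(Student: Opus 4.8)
The plan is to show that for every $k\geq 1$ and every $\rho\in\Lambda(A)$, the eigencone $V(A^k,\rho^k)$ is contained in $\core(A)$; since the core is itself a (max) cone, closed under the relevant sum operation, the inclusion $\sum_{k\geq 1}V^{\Sigma}(A^k)\subseteq\core(A)$ follows at once. The key observation is that if $x\in V(A^k,\rho^k)$, i.e. $A^k x=\rho^k x$ with $\rho>0$, then for any target exponent $i\geq 1$ we can write $i=qk+s$ with $0\le s<k$ (or more conveniently $i\le qk$ for some $q$), and then
\begin{equation*}
x=\rho^{-qk}(A^k)^q x = A^i\bigl(\rho^{-qk}A^{qk-i}x\bigr),
\end{equation*}
which exhibits $x$ as an element of $\spann(A^i)$. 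Here $\rho^{-qk}A^{qk-i}x$ is a legitimate nonnegative (resp. max) vector because $\rho>0$ is invertible in the multiplicative structure underlying both semirings, and $A^{qk-i}x$ is nonnegative. Since this works for every $i$, we get $x\in\cap_{i\geq 1}\spann(A^i)=\core(A)$.

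Concretely, I would first fix $k$ and $\rho\in\Lambda(A)$ and a vector $x\in V(A^k,\rho^k)$; by~\eqref{e:samespectrum} (equivalently Corollary~\ref{c:samespectrum}, which may be cited as established) $\rho^k\in\Lambda(A^k)$, so such $x$ indeed exist. Then for an arbitrary $i\geq 1$ choose $q$ large enough that $qk\geq i$, set $y:=\rho^{-qk}A^{qk-i}x$, and verify $A^i y=\rho^{-qk}A^{qk}x=\rho^{-qk}(A^k)^q x=\rho^{-qk}\rho^{qk}x=x$, using $A^kx=\rho^k x$ iterated $q$ times. This shows $x\in\spann(A^i)$ for all $i$, hence $x\in\core(A)$, hence $V(A^k,\rho^k)\subseteq\core(A)$. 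Taking sums over all $k\geq 1$ and $\rho\in\Lambda(A)$ — which stays inside $\core(A)$ because $\core(A)$ is an intersection of cones and therefore a cone — gives $\sum_{k\geq 1}V^{\Sigma}(A^k)=\sum_{k\geq 1,\rho\in\Lambda(A)}V(A^k,\rho^k)\subseteq\core(A)$.

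There is essentially no obstacle here: the only point requiring a word of care is that the argument must be phrased so that it is valid verbatim in both semirings, which it is, since it uses only associativity of matrix powering, the homogeneity $A(\alpha y)=\alpha(Ay)$, and invertibility of the positive scalar $\rho$ — all available in nonnegative algebra and in max algebra alike. One should also note the degenerate case $\Lambda(A)=\emptyset$ (when $\rho(A)=0$): then $V^{\Sigma}(A^k)=\{0\}$ for all $k$ by convention, and $\{0\}\subseteq\core(A)$ trivially, so the statement holds.
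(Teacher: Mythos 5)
Your proof is correct and takes essentially the same route as the paper: the core step is that if $A^k x=\rho^k x$ with $\rho>0$ then the invertibility of the positive scalar $\rho$ (valid in both semirings) lets you write $x$ as an image of arbitrarily high powers of $A$. The only cosmetic difference is that the paper exhibits $x\in\spann(A^{kt})$ for all $t$ and then invokes the nested chain~\eqref{e:monotonic} to conclude $\cap_t\spann(A^{kt})=\cap_t\spann(A^t)=\core(A)$, whereas you construct a preimage of $x$ under $A^i$ directly for every $i$; both work and both are short.
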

\begin{proof}
$x\in V(A^k,\rho)$ implies that $A^k x=\rho x$ and hence $x=\rho^{-t} A^{kt}x$ for all
$t\geq 1$ (using the invertibility of multiplication). Hence
$x\in\bigcap_{t\geq 1} \spann A^{kt}=\bigcap_{t\geq 1} \spann(A^t)$.
\end{proof}

So it remains to show the opposite inclusion
\begin{equation}
\label{e:maingoal2}
\core(A)\subseteq\sum_{k\geq 1} V^{\Sigma}(A^k).
\end{equation}

Let us first treat the trivial case $\rho(A)=0$, i.e.,
$\Lambda(A)=\emptyset$. There are only trivial classes in the
Frobenius normal form, and $\digr(A)$ is acyclic. This implies
$A^k=0$ for some $k\geq 1$. In this case $\core(A)=\{0\}$, the sum
on the right-hand side is $\{0\}$ by convention,
so~\eqref{e:maingoal} is the trivial "draw" $\{0\}=\{0\}$.

\subsection{Max algebra: cases of ultimate periodicity}
\label{ss:maxalg-easy}

In max algebra, unlike the nonnegative algebra, there are wide classes of matrices
 for which~\eqref{e:maingoal2} and~\eqref{e:maingoal} follow almost immediately. We list some of them below.\\
$\goodclass_1:$ {\em Irreducible matrices}.\\
$\goodclass_2:$ {\em Ultimately periodic matrices.} This is when the
sequence $\{A^{\otimes k}\}_{k\geq 1}$ is ultimately periodic with a
growth rate $\rho$ (in other words, when the sequence
$\{(A/\rho)^{\otimes k}\}_{k\geq 1}$ is ultimately periodic). As
shown by Moln\'arov\'a-Pribi\v{s}~\cite{MP-00}, this happens if and
only if the Perron roots of all nontrivial classes
of $A$ equal $\rhomax(A)=\rho$. \\
$\goodclass_3:$ {\em Robust matrices.} For any vector $x\in\Rpn$ the
orbit $\{A^{\otimes k}\otimes x\}_{k\geq 1}$ hits an eigenvector of
$A$, meaning that $A^{\otimes T}\otimes x$ is an eigenvector of $A$
for some $T$. This implies that the whole remaining part
$\{A^{\otimes k}\otimes x\}_{k\geq T}$ of the orbit (the ``tail'' of
the orbit) consists of multiples of that eigenvector $A^{\otimes
T}\otimes x$. The notion
of robustness was introduced and studied in~\cite{BCG-09}.\\
$\goodclass_4:$ {\em Orbit periodic matrices:} For any vector
$x\in\Rpn$ the orbit $\{A^{\otimes k}\otimes x\}_{k\geq 1}$ hits an
eigenvector of $A^{\otimes \sigma_x}$ for some $\sigma_x$, implying
that the remaining ``tail'' of the orbit $\{(A^{\otimes k}\otimes
x\}_{k\geq 1}$ is periodic with some growth rate. See~\cite[Section
7]{SS-11} for
characterization.\\
$\goodclass_5:$ {\em Column periodic matrices.} This is when for all $i$ we have
$(A^{\otimes (t+\sigma_i)})_{\cdot i}=\rho_i^{\sigma_i} A^{\otimes t}_{\cdot i}$ for all large enough $t$ and some
$\rho_i$ and $\sigma_i$.

Observe that $\goodclass_1\subseteq\goodclass_2\subseteq\goodclass_4\subseteq\goodclass_5$ and
$\goodclass_3\subseteq\goodclass_4$.
Indeed, $\goodclass_1\subseteq\goodclass_2$ is the Cyclicity Theorem~\ref{t:Cycl1}.
For the inclusion $\goodclass_2\subseteq\goodclass_4$ observe that, if
$A$ is ultimately periodic then $A^{\otimes(t+\sigma)}=\rho^{\sigma}A^{\otimes t}$
and hence  $A^{\otimes (t+\sigma)}\otimes x=\rho^{\sigma} A^{\otimes t}\otimes x$
holds for all $x\in\Rpn$ and all big enough $t$. Observe that
$\goodclass_3$ is a special case of $\goodclass_4$, which is a special case of $\goodclass_5$ since the
columns of matrix powers can be considered as orbits of the unit vectors.

To see that~\eqref{e:maingoal2} holds in all these cases, note that
in the column periodic case all column sequences $\{A_{\cdot
i}^t\}_{t\geq 1}$  end up with periodically repeating eigenvectors
of $A^{\otimes \sigma_i}$ or the zero vector, which implies that
$\spanmax(A^{\otimes t})\subseteq\bigoplus_{k\geq 1}
\vmax^{\Sigma}(A^{\otimes k})\subseteq\coremax(A)$ and hence
$\spanmax(A^{\otimes t})=\coremax(A)$ for all large enough $t$.
Thus, {\em finite stabilization of the core} occurs in all these classes. 
A necessary and sufficient condition for this finite stabilization is described in~\cite{BSS-inprep}.

\subsection{Core: a general argument}
\label{ss:pullman} The original argument of Pullman~\cite[Section
2]{Pul-71} used the separation of a point from a closed convex cone
by an open homogeneous halfspace (that contains the cone and does
not contain the point).

In the case of max algebra, Nitica and Singer~\cite{NS-07I} showed
that at each point $x\in\Rpn$ there are at most $n$ maximal
max-cones not containing this point. These {\em conical semispaces},
used to separate $x$ from any max cone not containing $x$, turn out
to be open. Hence they can be used in the max version of Pullman's
argument.

However, for the sake of a simultaneous proof we will exploit the
following analytic argument instead of separation. By
$B(x,\epsilon)$ we denote the intersection of the open ball centered
at $x\in\Rpn$ of radius $\epsilon$ with $\Rpn$. In the remaining
part of Section~\ref{s:core} we consider {\bf both algebras
simultaneously.}

\begin{lemma}
\label{l:analytic}
Let $x^1,\ldots,x^m\in\Rpn$ be nonzero and let $z\notin\spann(x^1,\ldots,x^m)$. Then there exists
$\epsilon>0$ such that $z\notin\spann(B(x^1,\epsilon),\ldots,B(x^m,\epsilon))$.
\end{lemma}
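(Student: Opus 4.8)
The plan is to establish this compactness-type statement by a straightforward limiting argument, exploiting the fact that, in both algebras, the span of finitely many vectors is a closed cone. First I would argue by contradiction: suppose no such $\epsilon$ exists. Then for every $k\geq 1$ there is a representation $z = \sum_{i=1}^m \alpha_i^{(k)} y_i^{(k)}$ (a usual nonnegative combination, or a max-combination, depending on the algebra) with each $y_i^{(k)}\in B(x^i,1/k)$, so that $y_i^{(k)}\to x^i$ as $k\to\infty$. The goal is to pass to the limit and obtain $z\in\spann(x^1,\ldots,x^m)$, contradicting the hypothesis.

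The main obstacle is controlling the coefficients $\alpha_i^{(k)}$, which a priori could be unbounded. I would handle this by normalizing: since $z\neq 0$ (note $z\notin\spann(x^1,\ldots,x^m)$ forces $z\neq 0$ because the span always contains $0$), we may pick a coordinate $j$ with $z_j>0$. For $k$ large the vectors $y_i^{(k)}$ are close to the nonzero vectors $x^i$, hence bounded above and below away from $0$ in norm; combined with $(\sum_i \alpha_i^{(k)} y_i^{(k)})_j = z_j$ (respectively $\bigoplus_i \alpha_i^{(k)} y_i^{(k)})_j = z_j$ in max algebra) and nonnegativity of all entries, this bounds each $\alpha_i^{(k)}$ from above by a constant independent of $k$ (in the max case one gets $\alpha_i^{(k)} (y_i^{(k)})_j \leq z_j$ directly; in the nonnegative case the same inequality holds termwise). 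Thus the sequences $\{\alpha_i^{(k)}\}_k$ lie in a compact interval $[0,M]$.

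Having bounded the coefficients, I would extract (by Bolzano–Weierstrass, passing to a subsequence $m$ times) a subsequence along which $\alpha_i^{(k)}\to\alpha_i\in[0,M]$ for each $i$. Then, using continuity of the operations — addition and scalar multiplication are continuous in nonnegative algebra, and $\max$ and scalar multiplication are continuous in max algebra — we get $z = \sum_{i=1}^m \alpha_i x^i$ (respectively $z=\bigoplus_{i=1}^m \alpha_i x^i$), so $z\in\spann(x^1,\ldots,x^m)$, the desired contradiction.

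One minor point worth noting explicitly in the write-up: the set $B(x^i,\epsilon)$ is the intersection of a Euclidean ball with $\Rpn$, so all $y_i^{(k)}$ are automatically nonnegative and the expressions $\spann(B(x^1,\epsilon),\ldots,B(x^m,\epsilon))$ make sense in both semirings; no separate case analysis is needed there. I expect the whole argument to be short, with the only genuine content being the normalization step that yields boundedness of the coefficients — everything else is routine compactness and continuity.
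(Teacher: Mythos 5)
Your overall strategy matches the paper's: argue by contradiction, bound the coefficients, extract a convergent subsequence by compactness, and pass to the limit using continuity. However, the step where you bound the coefficients $\alpha_i^{(k)}$ has a genuine gap. You fix a single coordinate $j$ with $z_j>0$ and claim the inequality $\alpha_i^{(k)}\,(y_i^{(k)})_j\leq z_j$ (which does hold, both in max and in nonnegative algebra) bounds each $\alpha_i^{(k)}$. But that inequality gives a bound on $\alpha_i^{(k)}$ only if $(y_i^{(k)})_j$ is bounded away from $0$, which in turn requires $(x^i)_j>0$. Nothing in the hypothesis forces $(x^i)_j>0$ for all $i$ at your chosen $j$: some $x^i$ may vanish in the $j$-th coordinate, in which case the nearby $y_i^{(k)}$ can have arbitrarily small $j$-th coordinate, and the product $\alpha_i^{(k)}\,(y_i^{(k)})_j$ stays below $z_j$ even as $\alpha_i^{(k)}\to\infty$. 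The observation that $||y_i^{(k)}||$ is bounded below does not rescue this: a lower bound on the norm does not give a lower bound on the specific coordinate $j$.

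The correct choice of coordinate must depend on $i$, not on $z$. For each $i$ pick $j_i\in\supp(x^i)$ (nonempty since $x^i\neq 0$); for $k$ large, $(y_i^{(k)})_{j_i}\geq\tfrac12(x^i)_{j_i}>0$, and then $\alpha_i^{(k)}\,(y_i^{(k)})_{j_i}\leq z_{j_i}$ yields $\alpha_i^{(k)}\leq 2z_{j_i}/(x^i)_{j_i}$. This is precisely what the paper does in a cleaner form: it bounds $y^i(\epsilon)$ from below by fixed nonzero vectors $v^i$, reads off $z\geq\sum_i\mu_i(\epsilon)v^i$ coordinatewise, and extracts the bound from any coordinate where $v^i$ is positive. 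Once you replace your single coordinate $j$ by the per-index choice $j_i$, the rest of your argument (Bolzano--Weierstrass plus continuity of the semiring operations) goes through and coincides with the paper's proof.
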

\begin{proof}
By contradiction assume that for each $\epsilon$ there exist points $y^i(\epsilon)\in B(x^i,\epsilon)$
and nonnegative scalars $\mu_i(\epsilon)$ such that
\begin{equation}
\label{e:epscomb}
z=\sum_{i=1}^m \mu_i(\epsilon) y^i(\epsilon).
\end{equation}
Since $y^i(\epsilon)\to x^i$ as $\epsilon\to 0$ and $x^i$ are nonzero,
we can assume that $y^i(\epsilon)$ are bounded from below by nonzero vectors $v^i$,
and then
$z\geq \sum_{i=1}^m \mu_i(\epsilon) v^i$ for all $\epsilon$,
implying that $\mu_i(\epsilon)$ are uniformly bounded from above.
By compactness we can assume that $\mu_i(\epsilon)$ converge to some $\mu_i\in\Rp$,
and then~\eqref{e:epscomb} implies by continuity that $z=\sum_{i=1}^m \mu_i x^i$, a contradiction.
\end{proof}

\begin{theorem}[{\cite[Theorem~2.1]{Pul-71}}]
\label{t:pullman} Assume that $\{K_l\}$ for $l\geq 1$, is a sequence
of cones in $\Rpn$ such that $K_{l+1}\subseteq K_l$ for all $l$, and
each of them generated by no more than $k$ nonzero vectors. Then the
intersection $K=\cap_{l=1}^{\infty}K_l$ is also generated by no more
than $k$ vectors.
\end{theorem}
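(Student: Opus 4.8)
The plan is to mimic Pullman's original argument, replacing the separation-by-open-halfspace step with the analytic Lemma~\ref{l:analytic}, so that both algebras are covered at once. Fix for each $l$ a generating set $\{x^{l,1},\ldots,x^{l,k}\}$ of $K_l$ consisting of at most $k$ nonzero vectors (if some $K_l$ is $\{0\}$ the statement is trivial, so assume all $K_l\neq\{0\}$). After multiplying each generator by a suitable positive scalar we may assume that all the $x^{l,j}$ lie on the unit sphere intersected with $\Rpn$, which is compact. By a diagonal/subsequence extraction argument, pass to a subsequence of indices $l$ along which, for each $j\in\{1,\ldots,k\}$, the vectors $x^{l,j}$ converge to some limit $y^j\in\Rpn$ on the unit sphere (in particular each $y^j$ is nonzero). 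Let $L:=\spann(y^1,\ldots,y^k)$, a cone generated by at most $k$ nonzero vectors; the goal is to show $K=L$.

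For the inclusion $L\subseteq K$: fix $m\geq 1$ and note that by the nesting~\eqref{e:monotonic}-type hypothesis $K_{l+1}\subseteq K_l$, for every index $l$ in our subsequence with $l\geq m$ we have $x^{l,j}\in K_l\subseteq K_m$. Since $K_m$ is generated by finitely many vectors it is closed (a finitely generated cone in $\Rpn$ is closed, both in nonnegative algebra and, by the remark after the definition of $\spanmax$, in max algebra), so letting $l\to\infty$ gives $y^j\in K_m$ for all $j$. As $m$ was arbitrary, $y^j\in\bigcap_m K_m=K$, and since $K$ is a cone closed under the relevant addition, $L=\spann(y^1,\ldots,y^k)\subseteq K$.

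For the reverse inclusion $K\subseteq L$, argue by contradiction: suppose $z\in K$ but $z\notin L=\spann(y^1,\ldots,y^k)$. Apply Lemma~\ref{l:analytic} to $y^1,\ldots,y^k$ and $z$: there is $\epsilon>0$ with $z\notin\spann(B(y^1,\epsilon),\ldots,B(y^k,\epsilon))$. But for all sufficiently large $l$ in our subsequence we have $x^{l,j}\in B(y^j,\epsilon)$ for every $j$, hence $K_l=\spann(x^{l,1},\ldots,x^{l,k})\subseteq\spann(B(y^1,\epsilon),\ldots,B(y^k,\epsilon))$, which does not contain $z$. This contradicts $z\in K\subseteq K_l$. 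Therefore $K\subseteq L$, and combined with the previous paragraph $K=L$ is generated by at most $k$ vectors.

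The step I expect to be the main (albeit routine) obstacle is the compactness/normalization bookkeeping: one must be careful that the generators can be rescaled to nonzero vectors of bounded norm before extracting convergent subsequences, that the limits $y^j$ stay nonzero (guaranteed by normalizing to the unit sphere), and that a single subsequence of indices $l$ works simultaneously for all $k$ coordinates of generators (handled by finitely many successive extractions). Everything else is a direct transcription of Pullman's reasoning, with Lemma~\ref{l:analytic} doing the work that separation did in~\cite{Pul-71} and the closedness of finitely generated (max) cones replacing any topological input specific to ordinary convexity.
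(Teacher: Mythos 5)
Your proof is correct and follows essentially the same route as the paper's: normalize the generators, extract a convergent subsequence by compactness, show the limit vectors lie in $K$ using closedness of finitely generated cones, and prove the reverse inclusion by contradiction via Lemma~\ref{l:analytic}.
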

\begin{proof}
Let $K_l=\spann(y^{l1},\ldots,y^{lk})$ (where some of the vectors
$y^{l1},\ldots,y^{lk}$ may be repeated when $K_l$ is generated by
less than $k$ nonzero vectors), and consider the sequences of
normalized vectors $\{y^{li}/||y^{li}||\}_{l\geq 1}$ for
$i=1,\ldots,k$, where $||u||:=\max u_i$ (or any other norm). As the
set $\{u\colon ||u||=1\}$ is compact, we can find a subsequence
$\{l_t\}_{t\geq 1}$ such that for $i=1,\ldots,k$, the sequence
$\{y^{l_ti}/||y^{l_ti}||\}_{t\geq 1}$ converges to a finite vector
$u^i$, which is nonzero since $||u^i||=1$. We will assume that
$||y^{l_t i}||=1$ for all $i$ and $t$.

We now show that $u^1,\ldots,u^k\in K$. Consider any $i=1,\ldots,k$.
For each $s$, $y^{l_t i}\in K_s$ for all sufficiently large $t$. As
$\{y^{l_ti}\}_{t\geq 1}$ converges to $u^i$ and $K_s$ is closed, we
have $u^i\in K_s$. Since this is true for each $s$, we have
$u^i\in\cap_{s=1}^{\infty} K_s=K$.

Thus $u^1,\ldots,u^k\in K$, and $\spann(u^1,\ldots,u^k)\subseteq K$.
We claim that also $K\subseteq \spann(u^1,\ldots, u^k)$. Assume to
the contrary that there is $z\in K$ that is not in
$\spann(u^1,\ldots, u^k)$. Then by Lemma~\ref{l:analytic} there
exists $\epsilon>0$ such that
$z\notin\spann(B(u^1,\epsilon),\ldots,B(u^k,\epsilon))$. Since the
sequence $\{y^{l_ti}\}_{t\geq 1}$ converges to $u^i$, we have
$y^{l_ti}\in B(u^i,\epsilon)$ for $t$ large enough, and
$$
\spann(y^{l_t1},\ldots,y^{l_tk})\subseteq \spann(B(u^1,\epsilon),\ldots,B(u^k,\epsilon))
$$
But $z$ belongs to $K_{l_t}=\spann(y^{l_t1},\ldots,y^{l_tk})$ since it belongs to the intersection of
all these cones, a contradiction.
\end{proof}

Theorem~\ref{t:pullman} applies to the sequence $\{\spann(A^t)\}_{t\geq 1}$,
so $\core(A)$ is generated by no more than $n$ vectors.

\begin{proposition}[{\cite[Lemma~2.3]{Pul-71}}]
\label{p:surj}
The mapping induced by $A$ on its core is a surjection.
\end{proposition}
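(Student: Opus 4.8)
The plan is to check first that $A$ maps $\core(A)$ into itself, and then to produce, for an arbitrary $z\in\core(A)$, a preimage inside $\core(A)$ as a point common to a nested family of \emph{compact} preimage sets. That the preimage sets become compact is the crux, and it comes from a stabilization property of the zero‑column pattern of the powers of $A$. Invariance is immediate: if $x\in\core(A)$ then for every $j\ge 2$ we have $x\in\spann(A^{j-1})$, say $x=A^{j-1}w$, so $Ax=A^{j}w\in\spann(A^{j})$ (using that $y\mapsto Ay$ is an endomorphism of the respective semimodule, valid in both algebras); together with $Ax\in\spann(A)$ this gives $Ax\in\bigcap_{j\ge 1}\spann(A^j)=\core(A)$, so $x\mapsto Ax$ restricts to a self‑map of $\core(A)$, and it remains to prove it is onto.

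For the key lemma, for $k\ge 1$ let $Z_k:=\{q:(A^k)_{\cdot q}=0\}$ be the set of nodes of $\digr(A)$ at which no path of length $k$ ends; a node $q$ lies in $Z_{k+1}$ precisely when all its in‑neighbours lie in $Z_k$, so $Z_1\subseteq Z_2\subseteq\cdots$ and the chain stabilizes, $Z_N=Z_{N+1}=\cdots$, for some $N\le n$. In either algebra $A^k u=0$ (for $u\ge 0$) is equivalent to $\supp(u)\subseteq Z_k$, since $A^k u=0$ forces $(A^k)_{pq}u_q=0$ for all $p,q$. Consequently $\spann(A^k)\cap\ker A=\{0\}$ for every $k\ge N$: if $d=A^k w$ and $Ad=0$, then $A^{k+1}w=0$, hence $\supp(w)\subseteq Z_{k+1}=Z_k$, hence $d=A^k w=0$.

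For surjectivity, take $z\in\core(A)$; we may assume $z\ne 0$. For $k\ge N$ set $Y_k:=\{\,y\in\spann(A^k):Ay=z\,\}$. Since $z\in\core(A)\subseteq\spann(A^{k+1})=A(\spann(A^k))$, each $Y_k$ is nonempty; it is closed, because $\spann(A^k)$ is a closed (finitely generated) cone in either algebra and $y\mapsto Ay$ is continuous; it is nested downward by~\eqref{e:monotonic}; and it is bounded, since a sequence $y_j\in Y_k$ with $\|y_j\|\to\infty$ would have a limit point $d$ of $y_j/\|y_j\|$ with $\|d\|=1$, $d\in\spann(A^k)$ and $Ad=\lim z/\|y_j\|=0$, contradicting the lemma. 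Thus $\{Y_k\}_{k\ge N}$ is a nested family of nonempty compact sets, so $\bigcap_{k\ge N}Y_k\ne\emptyset$, and any $y$ in this intersection lies in $\bigcap_{k\ge N}\spann(A^k)=\core(A)$ with $Ay=z$. The only place where finite dimensionality is used is the stabilization $Z_N=Z_{N+1}=\cdots$, which is what forces the $Y_k$ to be compact for large $k$; I expect that to be the main (though modest) obstacle, as everything else is a routine compactness argument that runs identically in max algebra and in nonnegative algebra.
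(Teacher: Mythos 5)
Your proof is correct and rests on the same key finite-dimensionality observation as the paper's: the zero-column pattern $Z_k$ of the powers $A^k$ is nondecreasing and stabilizes at some $N\le n$. Where the paper picks representations $z=A^{m+1}u^t$, notes that the coordinates of $u^t$ supported on the nonzero columns of $A^{m+1}$ are bounded, extracts a convergent subsequence and passes to the limit of $y^t=A^m u^t$, you repackage the same content as a clean intermediate lemma --- $\spann(A^k)\cap\ker A=\{0\}$ for $k\ge N$ --- and then observe that the fibres $Y_k=\{y\in\spann(A^k):Ay=z\}$ are nonempty, closed, nested and bounded (hence compact), concluding by the nested-intersection property. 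This is not a genuinely different route, but it is a tidier organization: it isolates exactly why the boundedness holds (no kernel directions survive in $\spann(A^k)$ once $k\ge N$) and replaces the paper's somewhat ad hoc subsequence extraction on the $u^t$ by a standard compactness argument on the preimage sets. One small detail worth noting: your reduction ``we may assume $z\ne 0$'' also quietly disposes of the nilpotent case that the paper handles with a separate remark, since then $\core(A)=\{0\}$ and surjectivity is vacuous.
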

\begin{proof}
First note that $A$ does induce a mapping on its core. If $z\in\core(A)$ then for each
$t$ there exists $x^t$ such that $z=A^tx^t$. Hence
$Az=A^{t+1}x^t$, so $Az\in\cap_{t\geq 2}\spann A^t=\core(A)$.

Next, let $m$ be such that $A^m$ has the greatest number of zero
columns (we assume that $A$ is not nilpotent; recall that a zero
column in $A^k$ remains zero in all subsequent powers). If
$z=A^tx^t$ for $t\geq m+1$, we also can represent it as
$A^{m+1}u^t$, where $u^t:=A^{t-m-1}x^t$. The components of $u^t$
corresponding to the nonzero columns of $A^{m+1}$ are bounded since
$A^{m+1}u^t=z$. So we can assume that the sequence of subvectors of
$u^{t}$ with these components converges. Then the sequence $y^t:=A^m
u^t$ also converges, since the indices of nonzero columns of $A^m$
coincide with those of $A^{m+1}$, which are the indices of the
converging subvectors of $u^t$. Let $y$ be the limit of $y^t$. Since
$y^s=A^{s-1} x^s$ are in $\spann(A^t)$ for all $s>t$, and since
$\spann(A^t)$ are closed, we obtain $y\in\spann(A^t)$ for all $t$.
Thus we found $y\in\core(A)$ satisfying $Ay=z$.
\end{proof}

Theorem~\ref{t:pullman} and Proposition~\ref{p:surj} show that the core is generated by finitely
many vectors in $\Rpn$ and that the mapping induced by $A$ on its core is ``onto''.

Now we use the fact that a finitely generated cone in the nonnegative orthant (and more generally, closed cone)
is generated by its extremals both in nonnegative algebra and in max algebra,
see~\cite{BSS,Wag-91}.

\begin{proposition}[{\cite[Theorem 2.2]{Pul-71}}]
\label{p:permute}
The mapping induced by $A$ on the extremal generators of its core is
a permutation (i.e., a bijection).
\end{proposition}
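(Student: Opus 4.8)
The plan is to derive this from the two facts already in hand: the core $\core(A)$ is finitely generated (Theorem~\ref{t:pullman}), hence generated by the finite set of its extremal rays, and the map that $A$ induces on $\core(A)$ is surjective (Proposition~\ref{p:surj}). The extra ingredient is the elementary observation, valid in both algebras, that an extremal vector of a cone, whenever it is written as a finite combination (nonnegative sum, respectively max-sum) of vectors of the cone, must be a positive multiple of one of the summands.

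First I would pick representatives $\hat e_1,\dots,\hat e_k$ of the distinct extremal rays of $\core(A)$, so that $\core(A)=\spann(\hat e_1,\dots,\hat e_k)$. Since $A$ maps $\core(A)$ into itself (as already observed in the proof of Proposition~\ref{p:surj}), each $A\hat e_i$ lies in $\core(A)$. Using surjectivity, any $z\in\core(A)$ equals $Ay$ for some $y=\sum_i\gamma_i\hat e_i\in\core(A)$, hence $z=\sum_i\gamma_i\,A\hat e_i$; therefore $\{A\hat e_1,\dots,A\hat e_k\}$ is again a generating set of $\core(A)$.

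Next I would apply the observation above to this new generating set: for each $j$ the extremal vector $\hat e_j$ is a positive multiple of some $A\hat e_{\phi(j)}$, which defines a map $\phi\colon\{1,\dots,k\}\to\{1,\dots,k\}$. I claim $\phi$ is injective: if $A\hat e_i$ were a positive multiple of both $\hat e_j$ and $\hat e_{j'}$, these two vectors would span the same ray, forcing $j=j'$. An injection of a $k$-element set into itself is a bijection, so for every index $i$ there is a unique $j=\phi^{-1}(i)$ with $A\hat e_i$ a positive multiple of $\hat e_j$; in particular $A\hat e_i\neq 0$. Thus $A$ carries the extremal ray through $\hat e_i$ onto the extremal ray through $\hat e_{\phi^{-1}(i)}$, and $i\mapsto\phi^{-1}(i)$ is a permutation of the extremal rays, which is the assertion.

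The one point requiring care --- and the only place where the two semirings must be treated slightly differently --- is the peeling argument that produces $\phi$: one removes the terms of the combination one at a time, invoking the definition of an extremal vector ($z=u+v\Rightarrow u,v\in\Rp z$ in nonnegative algebra; $z=u\oplus v\Rightarrow z\in\{u,v\}$ in max algebra), and one must check that the resulting scalar multiple is strictly positive, which holds because $\hat e_j\neq 0$. Beyond this, the argument is pure bookkeeping: finiteness of the extremal generating set and the fact that a closed cone in $\Rpn$ is generated by its extremals --- in both algebras --- have already been recorded before the statement.
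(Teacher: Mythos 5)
Your proof is correct and follows essentially the same route as the paper's: both arguments use surjectivity (Proposition~\ref{p:surj}) to conclude that $\{A\hat e_1,\dots,A\hat e_k\}$ generates the core, then invoke extremality to produce a map $\phi$ with $\hat e_j$ proportional to $A\hat e_{\phi(j)}$, show $\phi$ is injective because two distinct extremal rays cannot both be the image of the same $A\hat e_i$, and finish by pigeonhole. The only cosmetic difference is that the paper passes through an explicit preimage $x^j$ of each $u^j$ and expands it before applying $A$, whereas you observe the generating property of the $A\hat e_i$'s all at once.
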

\begin{proof}
Let $\core(A)=\spann(u^1,\ldots,u^k)$ where $u^1,\ldots,u^k$ are extremals of the core.
Suppose that $x^j$ is a preimage of $u^j$ in the core, that is, $Ax^j=u^j$ for some $x^j\in\core(A)$,
$j=1,\ldots,k$.
Then $x^j=\sum_{i=1}^k \alpha_i u^i$ for some nonnegative coefficients $\alpha_1,\ldots,\alpha_k$,
and $u^j=\sum_{i=1}^k\alpha_i A u^i$. Since $u^j$ is extremal, it follows that $u^j$ is proportional
to $Au^i$ for some $i$. Thus for each $j\in\{1,\ldots,k\}$ there exists an $i\in\{1,\ldots,k\}$
such that $Au^i$ is a positive multiple of $u^j$. But since for each $i\in\{1,\ldots,k\}$ there is at most one $j$
such that $Au^i$ is a positive multiple of $u^j$, it follows that $A$ induces a bijection on the
set of extremal generators of its core.
\end{proof}

We are now ready to prove~\eqref{e:maingoal} and Main
Theorem~\ref{t:core} taking the periodicity of the eigencone
sequence (Main Theorem~\ref{t:periodicity}) in ``credit''.

\begin{proof}[Proof of Main Theorem~\ref{t:core}]
Proposition~\ref{p:permute} implies that all extremals of $\core(A)$
are eigenvectors of $A^q$, where $q$ denotes the order of the permutation
induced by $A$ on the extremals of $\core(A)$.
Hence $\core(A)$ is a subcone of the sum of all eigencones of all
powers of $A$, which is the inclusion relation~\eqref{e:maingoal2}. Combining this with the reverse inclusion of
Lemma~\ref{l:natural} we obtain that $\core(A)$ is precisely the sum
of all eigencones of all powers of $A$, and using
~\eqref{e:samespectrum} (proved in Section~\ref{s:sameaccess}
below), we obtain the first part of the equality of Main
Theorem~\ref{t:core}. The last part of the equality of Main
Theorem~\ref{t:core} now follows from the periodicity of eigencones
formulated in Main Theorem~\ref{t:periodicity}, or more precisely,
from the weaker result of Theorem~\ref{t:girls} proved in
Section~\ref{s:eigencones}.
\end{proof}

\section{Spectral classes and critical components of matrix powers}
\label{s:sameaccess}

This section is rather of technical importance. It shows that the
union of node sets of all spectral classes is invariant under matrix
powering, and that access relations between spectral classes in all
matrix powers are essentially the same. Further, the case of an
arbitrary eigenvalue can be reduced to the case of the principal
eigenvalue for all powers simultaneously (in both algebras). At the
end of the section we consider the critical components of
max-algebraic powers.

\subsection{Classes and access relations}

As in Section~\ref{s:core}, the arguments are presented in both
algebras simultaneously. This is due to the fact that the edge sets
of $\digr(A^{\otimes k})$ and $\digr(A^{\times k})$ are the same for
any $k$ and that the definitions of spectral classes in both
algebras are alike. Results of this section can be traced back, for
the case of nonnegative algebra, to the classical work of
Frobenius~\cite{Fro-1912}, see remarks on the very first page of
\cite{Fro-1912} concerning the powers of an irreducible nonnegative
matrix\footnote{Frobenius defines (what we could call) the cyclicity
or index of imprimitivity $k$ of an irreducible $S$ as the number of
eigenvalues that lie on the spectral circle. He then remarks ``If
$A$ is primitive, then every power of $A$ is again primitive and a
certain power and all subsequent powers are positive''. This is
followed by ``If $A$ is imprimitive, then $A^m$ consists of $d$
irreducible parts where $d$ is the greatest common divisor of $m$
and $k$. Further, $A^m$ is completely reducible.  The characteristic
functions of the components differ only in the powers of the
variable'' (which provides a converse to the preceding assertion).
And then ``The matrix $A^k$ is the lowest power of $A$ whose
components are all primitive''. The three quotations cover
Lemma~\ref{l:sameperron} in the case of nonnegative algebra.}.

The reader is also referred to the monographs of Minc~\cite{Minc},
Ber\-man-Plem\-mons~\cite{BP}, Brua\-ldi-Ry\-ser~\cite{BR}, and we
will often cite the work of Tam-Sch\-nei\-der~\cite[Section
4]{TS-94} containing all of our results in this section, in
nonnegative algebra.

\begin{lemma}[cf. {\cite[Ch.~5, Ex.~6.9]{BP}}, {\cite[Lemma~4.5]{TS-94}} ]
\label{l:sameperron}
Let $A$ be irreducible with the unique eigenvalue $\rho$,
let $\digr(A)$ have cyclicity $\sigma$ and $k$ be a positive integer.
\begin{itemize}
\item[{\rm (i)}] $A^k$ is permutationally similar to a direct sum of gcd$(k,\sigma)$ irreducible blocks
with eigenvalues $\rho^k$,
and $A^k$ does not have eigenvalues other than $\rho^k$.
\item[{\rm (ii)}] If $k$ is a multiple of $\sigma$, then the sets of indices in
these blocks coincide with the cyclic classes of $\digr(A)$.
\item[{\rm (iii)}] If $\supp(x)$ is a cyclic class of $\digr(A)$, then $\supp(Ax)$ is the previous
cyclic class.
 \end{itemize}
\end{lemma}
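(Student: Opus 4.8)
The plan is to reduce everything to the Boolean/combinatorial facts about powers of strongly connected graphs that have already been assembled in Subsection~\ref{ss:cycl}, and then transport them through diagonal scaling to the weighted setting. Throughout we use that $\digr(A)$ is strongly connected (irreducibility) with cyclicity $\sigma$. I would first treat the trivial case $n=1$ separately (then $\sigma=1$, $A=(\rho)$, and all three statements are immediate), and henceforth assume $\digr(A)$ is nontrivial.

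For part~(i): apply Theorem~\ref{t:brualdi}~(i) to $\digr:=\digr(A)$. It tells us that $\digr^k$ has exactly $\gcd(k,\sigma)$ nontrivial strongly connected components, pairwise not accessing each other. Since the edge set of $\digr(A^k)$ (whichever of $A^{\otimes k}$, $A^{\times k}$ we mean — the edge sets agree) is exactly the edge set of $\digr^k$, this says $A^k$ is permutationally similar to a block-diagonal matrix (no off-diagonal blocks, because the components do not access each other) whose diagonal blocks $A^k_{C}$ are irreducible (each component being strongly connected) — here $C$ ranges over the $\gcd(k,\sigma)$ node sets of the components. It remains to identify the eigenvalue of each block as $\rho^k$ in both algebras. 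For this I would use the path interpretation: normalize by a diagonal scaling $A\mapsto X^{-1}AX$ (which changes neither eigenvalues nor the graph nor cyclicity, cf. Subsection~\ref{ss:viz}) so that $\rho=1$; in max algebra this is exactly strict visualization (Theorem~\ref{t:strictvis}), and then Lemma~\ref{l:CAk}~(i) gives $\crit(A^{\otimes k})=\crit(A)^k$, which is nonempty and has all cycles of weight $1$, forcing $\rhomax(A^k_C)=1=\rho^k$ for each block. In nonnegative algebra one argues directly: a cycle in block $C$ of $\digr(A^k)$ of length $\ell$ corresponds to a closed walk of length $k\ell$ in $\digr(A)$, so its geometric mean in $A^k$ is a geometric mean of closed walks in $A$ raised to the $k$-th power, whence $\rhonon(A^k_C)=\rhonon(A)^k=\rho^k$ (using that in the irreducible nonnegative case the Perron root is the maximal cycle geometric mean as well). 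Finally, since $A^k$ is (permutationally similar to) the direct sum of these blocks, $\Lambda(A^k)$ — in either algebra — is the union of the spectra of the blocks, i.e. $\{\rho^k\}$; there are no other eigenvalues. This is consistent with~\eqref{e:samespectrum} applied to the irreducible case.

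Part~(ii) is the special case $\sigma \mid k$ of Theorem~\ref{t:brualdi}~(ii): then $\gcd(k,\sigma)=\sigma$, and the node set of each of the $\sigma$ components of $\digr^k$ consists of $\sigma/\sigma=1$ cyclic class of $\digr(A)$. So the index sets of the $\sigma$ irreducible blocks of $A^k$ are precisely the $\sigma$ cyclic classes of $\digr(A)$, as claimed.

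For part~(iii): let $C$ be a cyclic class of $\digr(A)$ and suppose $\supp(x)=C$. I would show $\supp(Ax)$ equals the previous cyclic class $C'$ (the one whose length-to-$C$ paths have length $\equiv 1 \ (\mathrm{mod}\ \sigma)$; equivalently $C$ is next after $C'$). This is a pure support computation, identical in both algebras since $(Ax)_i>0$ (resp. $\neq -\infty$, but here entries are in $\Rp$) iff $a_{ij}>0$ for some $j\in\supp(x)=C$. Now $a_{ij}>0$ means $(i,j)$ is an edge of $\digr(A)$, i.e. there is an $i$–$j$ path of length $1$ with $j\in C$; by the definition of cyclic classes in Subsection~\ref{ss:cycl} this forces $i$ to lie in the cyclic class previous to $C$, namely $C'$. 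Conversely, given any $i\in C'$: since $\digr(A)$ is strongly connected there is some $j\in C$ with an $i$–$j$ path, and any such path has length $\equiv 1\ (\mathrm{mod}\ \sigma)$; taking a shortest edge out of $i$ toward $C$ — more carefully, since strong connectivity gives at least one outgoing edge $(i,j_0)$ from $i$, and every such $j_0$ lies in the cyclic class next after $C'$, which is $C$ — we get $j_0\in C=\supp(x)$, hence $(Ax)_i\ge a_{ij_0}x_{j_0}>0$. Therefore $\supp(Ax)=C'$, the previous cyclic class.

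The main obstacle, and the only place requiring real care, is the eigenvalue identification in part~(i): the clean statement "every diagonal block has eigenvalue exactly $\rho^k$ (and nothing else)" must be made to hold \emph{simultaneously} in the two algebras, where the Perron root means two different things. The device that makes this uniform is the normalization $\rho=1$ together with Lemma~\ref{l:CAk}: in max algebra it is literally Lemma~\ref{l:CAk}~(i), and in nonnegative algebra the analogue is the elementary walk-counting argument above. Everything else is a direct quotation of Theorem~\ref{t:brualdi} (for (i),(ii)) and of the definition of cyclic classes (for (iii)).
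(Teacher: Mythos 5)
The combinatorial skeleton of your argument is sound, and parts (ii) and (iii) are fine — in fact your spelling out of (iii) (both inclusions, via the definition of cyclic classes and the existence of an outgoing edge from each node of a nontrivial strongly connected graph) is a reasonable expansion of the paper's one-line hint. The problem is in the eigenvalue identification in part (i), specifically your nonnegative-algebra branch.

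You assert, parenthetically, that ``in the irreducible nonnegative case the Perron root is the maximal cycle geometric mean as well.'' This is false. The max cycle geometric mean is $\lambda(A)=\rhomax(A)$, and in general $\rhonon(A)\ge\lambda(A)$ with strict inequality being the typical situation: e.g.\ for the $2\times 2$ all-ones matrix, $\lambda(A)=1$ while $\rhonon(A)=2$. (The paper itself stresses this distinction in Subsection~\ref{ss:pfelts}: a cycle achieving $\lambda(A)$ need not sit in a class achieving $\rhonon(A)$.) Consequently, the walk-counting computation you give does not identify $\rhonon$ of a diagonal block of $A^{\times k}$ as $\rho^k$, and that branch of your proof of (i) collapses. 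Your max-algebra branch (strict visualization plus Lemma~\ref{l:CAk}) is correct, but you then no longer have a working argument for the other semiring, which defeats the purpose of a simultaneous lemma.

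The paper's device is cleaner and algebra-independent: after reducing to $\rho=1$, it takes a \emph{positive} Perron eigenvector $x\in V(A,1)$ (available by irreducibility in both algebras), sets $X=\diag(x)$, and passes to $B=X^{-1}AX$. Then $B$ has constant row sums equal to $1$ — stochastic in the one case, max-stochastic in the other — and this property is inherited by each irreducible diagonal block of $B^k$ after the permutation coming from Theorem~\ref{t:brualdi}. Each such block therefore has $(1,\ldots,1)$ as a positive eigenvector with eigenvalue $1$, which for an irreducible matrix is the unique (distinguished) eigenvalue in either semiring; restricting an arbitrary nonnegative eigenvector of $A^k$ to the blocks then shows $\Lambda(A^k)=\{1\}$. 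If you replace your walk-counting step for the nonnegative case by this stochastic-scaling argument — or simply run the stochastic argument uniformly in both semirings as the paper does — part (i) is repaired and the rest of your proof stands.
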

\begin{proof}
(i): Assuming without loss of generality $\rho=1$, let $X=\diag(x)$
for a positive eigenvector $x\in V(A,\rho)$ and consider
$B:=X^{-1}AX$ which is stochastic (nonnegative algebra), or
max-stochastic, i.e., such that $\bigoplus_{j=1}^n b_{ij}=1$ holds
for all $i$ (max algebra). By Theorem~\ref{t:brualdi}, $B^k$ is
permutationally similar to a direct sum of gcd$(k,\sigma)$
irreducible isolated blocks. These blocks are stochastic (or
max-stochastic), hence they all have an eigenvector $(1,\ldots,1)$
associated with the unique eigenvalue $1$. If $x\in
V(A^k,\Tilde{\rho})$ for some $\Tilde{\rho}$, then its subvectors
corresponding to the irreducible blocks of $A^k$ are also
eigenvectors of those blocks, or zero vectors. Hence
$\Tilde{\rho}=1$, which is the only eigenvalue of $A^k$.

(ii): By Theorem~\ref{t:brualdi}, $\digr(A)$ splits into
gcd$(k,\sigma)=\sigma$ components, and each of them contains exactly one cyclic class
of $\digr(A)$.

(iii): Use the definition of cyclic classes and that each node has an ingoing edge.
\end{proof}

\begin{lemma}
\label{l:samenonzero}
Both in max algebra and in nonnegative linear algebra,
the trivial classes of $A^k$ are the same for all $k$.
 \end{lemma}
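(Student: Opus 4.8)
The plan is to recall that a class $\mu$ of $A$ is trivial precisely when $N_\mu$ is a single node $\{i\}$ with no loop at $i$ in $\digr(A)$, equivalently when $i$ lies on no cycle of $\digr(A)$. Since the edge sets of $\digr(A^{\otimes k})$ and $\digr(A^{\times k})$ coincide for every $k\geq 1$ (both contain $(i,j)$ iff there is an $i\;-\;j$ path of length $k$ in $\digr(A)$), it suffices to argue in terms of this common graph $\digr(A^k)$, and the statement then holds simultaneously in both algebras.

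First I would establish the elementary graph-theoretic fact: a node $i$ lies on a cycle of $\digr(A)$ if and only if it lies on a cycle of $\digr(A^k)$ for every $k\geq 1$. Indeed, if $i$ lies on an elementary cycle $C$ of length $\ell$ in $\digr(A)$, then walking around $C$ exactly $k$ times gives a closed walk of length $k\ell$ through $i$, hence a path of length $k$ from $i$ to $i$ in $\digr(A^{k\ell})$; but more care is needed for arbitrary $k$, so instead I would use that $i$ lies on a cycle iff $i$ belongs to a nontrivial strongly connected component $N_\mu$ of $\digr(A)$, and then invoke the structure already developed: the restriction $\digr(A_{\mu\mu})$ is strongly connected and nontrivial with some cyclicity $\sigma_\mu$, and by Theorem~\ref{t:brualdi}(i) the graph $(\digr(A_{\mu\mu}))^k$ consists of $\gcd(k,\sigma_\mu)$ nontrivial strongly connected components on the same node set $N_\mu$. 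In particular every node of $N_\mu$ still lies on a cycle in $\digr(A^k)$. Conversely, a trivial single-node class $\{i\}$ with no loop contributes a node $i$ on no cycle of $\digr(A)$; any path of length $k$ from $i$ to $i$ in $\digr(A)$ would be a closed walk through $i$, forcing $i$ onto a cycle of $\digr(A)$ — a contradiction — so $i$ is on no cycle of $\digr(A^k)$ either.

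Next I would translate this back to classes. The strongly connected components of $\digr(A^k)$ refine the partition $\{N_1,\dots,N_r\}$ into strongly connected components of $\digr(A)$: a component of $\digr(A^k)$ is contained in a single $N_\mu$ (since an $i\;-\;j$ path of length $k$ is in particular an $i\;-\;j$ path, and a $j\;-\;i$ path of length $k$ likewise, so $i,j$ are in the same component of $\digr(A)$). Hence the trivial classes of $A^k$ are exactly the singleton components $\{i\}$ of $\digr(A^k)$ carrying no loop, i.e.\ the nodes $i$ on no cycle of $\digr(A^k)$; by the previous paragraph these are exactly the nodes on no cycle of $\digr(A)$, which form the trivial classes of $A$. (A node $i$ forming a trivial class of $A$ is its own component in $\digr(A)$ hence its own component in $\digr(A^k)$, with no loop; and conversely a trivial class of $A^k$ sits inside some $N_\mu$, and if $N_\mu$ were nontrivial every node of it, including $i$, would lie on a cycle of $\digr(A^k)$, contradiction, so $N_\mu=\{i\}$ is a trivial class of $A$.)

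The main obstacle is purely the bookkeeping in the converse direction — ruling out that a nontrivial class of $A$ could contribute a trivial class to $A^k$ — and this is handled cleanly by Theorem~\ref{t:brualdi}(i) applied componentwise, which guarantees that powering a nontrivial strongly connected graph again yields nontrivial strongly connected components on the same node set. Everything else is the observation that ``trivial class'' is a property of the underlying digraph alone, together with the coincidence of the digraphs $\digr(A^{\otimes k})$ and $\digr(A^{\times k})$, which is why the lemma is valid in both algebras at once.
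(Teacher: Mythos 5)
Your proof is correct, and it rests on the same idea as the paper's (much terser) proof: a class is trivial precisely when its node lies on no cycle of the associated digraph, this property is invariant under taking powers, and $\digr(A^{\otimes k})$ and $\digr(A^{\times k})$ coincide as unweighted digraphs, so the conclusion holds in both algebras at once. The paper simply asserts the invariance; you spell it out. One remark worth making: your detour through Theorem~\ref{t:brualdi}(i) is unnecessary, and the direct argument you dismissed actually works for arbitrary $k$. If $i$ lies on an elementary cycle $C$ of length $\ell$ in $\digr(A)$, then traversing $C$ exactly $k$ times yields a closed walk of length $k\ell$ from $i$ to $i$ in $\digr(A)$, and reading it off in blocks of length $k$ gives a closed walk of length $\ell$ from $i$ to $i$ in $\digr(A^{k})$ (not in $\digr(A^{k\ell})$, as you wrote) — so $i$ lies on a cycle of $\digr(A^k)$ for every $k$, with no restriction. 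The converse is equally immediate: a closed walk of length $m$ through $i$ in $\digr(A^k)$ expands to one of length $km$ in $\digr(A)$. This makes the invariance self-contained and elementary, without invoking the Boolean cyclicity machinery; the rest of your bookkeeping (components of $\digr(A^k)$ refine those of $\digr(A)$, so the cycle characterization of nontriviality transfers to classes) is fine.
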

\begin{proof}
In both algebras, an index belongs to a class with nonzero Perron root if and only if
the associated graph contains a cycle with a nonzero weight traversing the node with that
index. This property is invariant under taking matrix powers, hence the claim.
\end{proof}

In both algebras, each class $\mu$ of $A$ with cyclicity $\sigma$
corresponds to an irreducible submatrix $A_{\mu\mu}$. It is easy to
see that $(A^k)_{\mu\mu}=(A_{\mu\mu})^k$. Applying
Lemma~\ref{l:sameperron} to $A_{\mu\mu}$ we see that $\mu$ gives
rise to gcd$(k,\sigma)$ classes in $A^k$, which are said to be {\em
derived} from their common {\em ancestor} $\mu$. If $\mu$ is
trivial, then it gives rise to a unique trivial derived class of
$A^k$, and if $\mu$ is non-trivial then all the derived classes are
nontrivial as well. The classes of $A^k$ and $A^l$ derived from the
common ancestor will be called {\em related}. Note that this is an
equivalence relation on the set of classes of all powers of $A$.
Evidently, a class of $A^k$ is derived from a class of $A$ if and
only if its index set is contained in the index set of the latter
class. It is also clear that each class of $A^k$ has an ancestor in
$A$.

We now observe that access relations in matrix powers are
``essentially the same''. This has identical proof in max algebra
and nonnegative algebra.

\begin{lemma}
\label{l:sameaccess} Let $A\in\Rpnn$. For all $k,l\geq 1$ and
$\rho>0$, if an index $i\in\{1,\ldots,n\}$ accesses (resp.  is
accessed by) a class with Perron root $\rho^k$ in $A^k$ then $i$
accesses (resp. is accessed by) a related class with Perron root
$\rho^l$ in $A^l$.
\end{lemma}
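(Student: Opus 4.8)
The plan is to reduce the claim to a statement about classes of $A^k$ and $A^l$ having the same ancestor, using the observation (established earlier in this section) that a class of $A^m$ is derived from a class $\mu$ of $A$ precisely when its index set sits inside $N_\mu$, and that every class of a power has an ancestor. So I would first restate what needs to be proved in terms of ancestors: if $i$ accesses a class $\gamma$ of $A^k$ with Perron root $\rho^k$, let $\mu$ be the ancestor of $\gamma$ in $A$; then $\mu$ has Perron root $\rho$ by Lemma~\ref{l:sameperron}(i) (the derived classes of $\mu$ in $A^k$ all carry eigenvalue $\rho_\mu^k$), and I want to show $i$ accesses some class $\gamma'$ of $A^l$ that is also derived from $\mu$ (hence related to $\gamma$, with Perron root $\rho^l$). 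The ``is accessed by'' case is symmetric and I would simply note it follows by the same argument applied to $A^T$, or by reversing all edges.

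The core of the argument is that the edge set of $\digr(A^{\otimes k})$ — equivalently the reachability relation on $\{1,\dots,n\}$ through paths of length divisible by... — no: more simply, accessibility in $\digr(A^m)$ between two nodes $i$ and $j$ is equivalent to the existence of a path from $i$ to $j$ in $\digr(A)$ whose length is a multiple of $m$. First I would record this: $(A^m)_{ij}^{\otimes t}\neq 0$ (resp.\ the $(i,j)$ entry of $(A^{\times m})^t$ is positive) iff there is an $i$-$j$ path of length $mt$ in $\digr(A)$, and the edge sets of $\digr(A^{\otimes m})$ and $\digr(A^{\times m})$ coincide, so the discussion is purely graph-theoretic and identical in both algebras. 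Then: $i$ accesses $\gamma$ in $A^k$ means there is a node $j\in N_\gamma\subseteq N_\mu$ with a path from $i$ to $j$ in $\digr(A^k)$, i.e.\ an $i$-$j$ path $P$ in $\digr(A)$ of length divisible by $k$. Since $j\in N_\mu$ and $\mu$ is a strongly connected component of $\digr(A)$, I can prolong $P$ inside $\mu$: by the definition of cyclicity and the fact recalled in Subsection~\ref{ss:cycl} that between any two nodes of a strongly connected graph the path lengths are constant modulo the cyclicity, there are closed walks at $j$ of every sufficiently large length that is a multiple of $\sigma_\mu$ (this is exactly Lemma~\ref{l:schur}/the Frobenius coin argument applied to the elementary cycles through $j$). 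Hence for suitable $N$ there is an $i$-$j$ walk of length $k\cdot N'$ that is also divisible by $l$ — concretely, pad $P$ with a closed walk at $j$ of length $\operatorname{lcm}(k,l)\cdot s$ for large $s$ — giving a path from $i$ to $j$ in $\digr(A^l)$. Thus $i$ accesses in $A^l$ the class $\gamma'$ of $A^l$ that contains $j$; and since $N_{\gamma'}\subseteq N_\mu$, $\gamma'$ is derived from $\mu$, i.e.\ related to $\gamma$, with Perron root $\rho_\mu^l=\rho^l$.

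The one point that needs a little care — and which I expect to be the main obstacle — is the prolongation step: I must ensure the padded walk stays \emph{inside} the component $\mu$ (so that no larger-eigenvalue class is picked up and so that the endpoint is genuinely in $N_\mu$) and that its total length is simultaneously a positive multiple of both $k$ and $l$. Staying inside $\mu$ is automatic because $\mu$ is strongly connected and I only append closed walks at $j\in N_\mu$; the length condition is the Frobenius/Schur lemma applied to the set of elementary cycle lengths through $j$ in $\digr(A_{\mu\mu})$ — their gcd divides $\sigma_\mu$, and we only need lengths that are large multiples of $\operatorname{lcm}(k,l)$, so Lemma~\ref{l:schur} supplies them. (If $\mu$ is trivial, then $N_\gamma=N_{\gamma'}=N_\mu$ is a single node with no cycle and Perron root $0\neq\rho$, so this case does not arise.) A final remark closes the proof: the ``accessed by'' half is obtained verbatim after transposing $A$, since $\digr(A^T)$ reverses all edges, classes and their ancestors are preserved, and Perron roots of the blocks are unchanged.
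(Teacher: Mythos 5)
Your overall strategy (pass to ancestors, use that accessibility in $\digr(A^m)$ corresponds to path lengths divisible by $m$, extend the access path inside the ancestor class) is the right one, and it is the paper's strategy too. But the concrete padding step is wrong. You insist on appending a \emph{closed} walk at the terminal node $j$, whose length must therefore (by the Frobenius/Schur argument) be a multiple of $\sigma_\mu$; and the particular lengths you propose, $\operatorname{lcm}(k,l)\cdot s$, are multiples of $l$, so appending them never changes the residue of the total length modulo $l$. If the original path has length $kt\not\equiv 0\pmod{l}$, you never reach a length divisible by $l$ this way. More generally, restricting to closed walks forces the pad length to be a multiple of $\sigma_\mu$, and there need not exist such a multiple congruent to $-kt\pmod{l}$: e.g.\ take $\mu$ a single $4$-cycle $1\to2\to3\to4\to1$ (so $\sigma_\mu=4$), an extra node $5$ with one edge $5\to1$, $k=1$, $t=1$, $l=4$; no closed walk at $1$ can turn the length-$1$ path $5\to1$ into a walk of length divisible by $4$ ending at $1$, yet the lemma is true because $5\to1\to2\to3\to4$ of length $4$ ends at node $4$, a different node of $N_\mu$.

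The fix — and this is exactly what the paper does — is to drop the requirement that the pad be a closed walk at $j$. Since the ancestor class $\mu$ is nontrivial (because $\rho>0$) and strongly connected, every node of $N_\mu$ has an outgoing edge inside $N_\mu$, so from $j$ there exist walks of \emph{every} nonnegative length staying inside $N_\mu$. Choose one of length $r\in\{0,\dots,l-1\}$ with $kt+r\equiv 0\pmod{l}$; its endpoint $j'$ lies in $N_\mu$, and the combined $i$-$j'$ walk has positive length divisible by $l$, so $i$ accesses the class of $j'$ in $A^l$, which is derived from $\mu$ and hence related to $\gamma$ with Perron root $\rho^l$. No Frobenius coin problem, no cyclicity, no closed walk is needed. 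With that replacement your proof matches the paper's; the transposition remark for the ``accessed by'' half is fine (and the paper treats it likewise, just by symmetry).
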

\begin{proof} 

We deduce from Lemma~\ref{l:sameperron} and
Lemma~\ref{l:samenonzero} that the index set of each class of $A^k$
with Perron root $\rho^k$ is contained in the ancestor class of $A$
with Perron root $\rho$. Then, $i$ accessing (resp. being accessed
by) a class in $A^k$ implies $i$ accessing (resp. being accessed by)
its ancestor in $A$. Since $\rho>0$, this ancestor class is
nontrivial, so the access path can be extended to have a length
divisible by $l$, by means of a path contained in the ancestor
class. By Lemma~\ref{l:sameperron}, the ancestor decomposes in $A^l$
into several classes with the common Perron root $\rho^l$, and $i$
accesses (resp. is accessed by) one of them.
\end{proof}



\begin{theorem}[{\cite[Corollary 4.6]{TS-94}}]
\label{t:samespectrum}
Let $A\in\Rpnn$.
\begin{itemize}
\item[{\rm (i)}] If a class $\mu$ is spectral in $A$, then so are the classes
derived from it in $A^k$. Conversely, each spectral class of
$A^k$ is derived from a spectral class of $A$.
\item[{\rm (ii)}] For each class $\mu$ of $A$ with cyclicity $\sigma$, there are gcd$(k,\sigma)$ classes
of $A^k$ derived from it. If $k$ is a multiple of $\sigma$ then
the index sets of derived classes are the cyclic classes of
$\mu$.
\end{itemize}
\end{theorem}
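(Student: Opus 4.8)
The plan is to derive Theorem~\ref{t:samespectrum} from Lemmas~\ref{l:sameperron}, \ref{l:samenonzero}, \ref{l:sameaccess} and the ancestor/derivation dictionary recalled just before the statement, working in both algebras at once (the edge set of $\digr(A^k)$, hence the reduced graph $R(A^k)$ and all access relations in it, does not depend on the algebra). Part~(ii) is really only a reformulation of that dictionary: since $(A^k)_{\mu\mu}=(A_{\mu\mu})^{k}$, Lemma~\ref{l:sameperron}(i),(ii) applied to the irreducible submatrix $A_{\mu\mu}$ says precisely that $\mu$ gives rise to $\gcd(k,\sigma)$ derived classes of $A^k$, all with Perron root $\rho_\mu^{\,k}$, and that their index sets are the cyclic classes of $\mu$ when $\sigma\mid k$. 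For part~(i) I will use two auxiliary facts throughout: if a class $\pi$ of $A^k$ is derived from a class $\tau$ of $A$ then $\rho_\pi=\rho_\tau^{\,k}$ (Lemma~\ref{l:sameperron}(i), in either algebra); and the several classes of $A^k$ derived from a common ancestor $\mu$ do not access each other in $R(A^k)$ --- any path of $\digr(A^k)$ joining two nodes of $N_\mu$ lifts to a path of $\digr(A)$ which, once it enters the strongly connected set $N_\mu$, cannot leave it, so it lies in $\digr(A_{\mu\mu})^{k}$, whose strongly connected components are isolated by Theorem~\ref{t:brualdi}(i).

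For the forward part of~(i), suppose $\mu$ is $(A,\rho)$-spectral and let $\nu$ be a class of $A^k$ derived from $\mu$, so $\rho_\nu=\rho^{k}>0$. Let $\pi$ be any class of $A^k$ with $\pi\to\nu$ in $R(A^k)$ and pick $i\in N_\pi$. Then $i$ accesses the class $\nu$, whose Perron root is $\rho^{k}$, so by Lemma~\ref{l:sameaccess} (used with $l=1$) the index $i$ accesses, in $A$, a class related to $\nu$ with Perron root $\rho$; the only class of $A$ related to $\nu$ is its ancestor $\mu$, so $i\to\mu$ in $\digr(A)$. Writing $\tau$ for the ancestor of $\pi$ in $A$, this gives $\tau\to\mu$ in $R(A)$, so spectrality of $\mu$ yields $\rho_\tau\le\rho$ in max algebra, and $\rho_\tau<\rho$ whenever $\tau\ne\mu$ in nonnegative algebra. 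Combined with $\rho_\pi=\rho_\tau^{\,k}$ this gives $\rho_\pi\le\rho^{k}=\rho_\nu$, respectively $\rho_\pi<\rho_\nu$ once $\pi\ne\nu$ (and $\pi\ne\nu$ does force $\tau\ne\mu$, since otherwise $\pi$ and $\nu$ would be distinct derived classes of $\mu$ accessing one another). Hence $\nu$ is spectral in $A^k$, which proves the first assertion of~(i) and, in particular, that \emph{every} class of $A^k$ derived from a spectral class of $A$ is spectral.

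For the converse part of~(i), let $\nu$ be spectral in $A^k$ with ancestor $\mu$; then $\rho_\nu=\rho_\mu^{\,k}$ by Lemma~\ref{l:sameperron}(i), so $\rho:=\rho_\mu>0$. To see that $\mu$ is $(A,\rho)$-spectral, take a class $\tau$ with $\tau\to\mu$ in $R(A)$; I must show $\rho_\tau\le\rho$, respectively $\rho_\tau<\rho$ when $\tau\ne\mu$. If $\tau$ is trivial this is clear since $\rho_\tau=0<\rho$, so assume $\tau$ nontrivial. The heart of the argument --- and the only step I expect to require real work --- is to produce a class $\pi$ of $A^k$ derived from $\tau$ with $\pi\to\nu$ in $R(A^k)$; equivalently, a path of $\digr(A)$ from $N_\tau$ to $N_\nu$ of length divisible by $k$. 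Granting this, $\rho_\pi=\rho_\tau^{\,k}$ and $\pi\ne\nu$ when $\tau\ne\mu$ (disjoint index sets), so spectrality of $\nu$ forces $\rho_\tau^{\,k}\le\rho^{k}$, respectively $\rho_\tau^{\,k}<\rho^{k}$, i.e.\ $\rho_\tau\le\rho$, respectively $\rho_\tau<\rho$; together with part~(ii) this also gives the statement that each spectral class of $A^k$ is derived from a spectral class of $A$. To build the path one starts from \emph{any} path of $\digr(A)$ from $N_\tau$ to $N_\mu$, extends it inside the strongly connected graph $\digr(A_{\mu\mu})$ so as to terminate in $N_\nu$ (possible because, by Theorem~\ref{t:brualdi}(ii), $N_\nu$ is the union of one residue class modulo $\gcd(k,\sigma_\mu)$ of the cyclic classes of $\mu$), and finally corrects the total length modulo $k$ by choosing suitably the cyclic class of $\tau$ in which the path begins and inserting a long enough closed walk there: since a path inside the nontrivial component $N_\tau$ ending at a fixed node has length running through one full residue class modulo $\sigma_\tau$ depending bijectively on its starting cyclic class, Lemma~\ref{l:schur} shows that the length of this $N_\tau$-portion alone can be prescribed to realise any residue modulo $k$, which absorbs whatever the rest of the path contributes. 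This length bookkeeping is exactly where the proof is delicate, and it is the step on which I would expect to spend the most care.
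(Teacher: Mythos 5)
Your proof is correct, but it takes a genuinely different (and somewhat more laborious) route than the paper's. The paper proves part (i) by contraposition, establishing the equivalence ``$\mu$ non-spectral $\Leftrightarrow$ $\nu$ non-spectral'' for a fixed pair with $\nu$ derived from $\mu$. In each direction the paper fixes an index $i\in N_\nu\subseteq N_\mu$ and uses Lemma~\ref{l:sameaccess} (together with Lemma~\ref{l:sameperron}) to transfer a class with large Perron root accessing $i$ from $A$ to $A^k$ or back; since $i\in N_\nu$, any such class automatically accesses $\nu$, so the lemma is applied exactly as stated and no particular derived class needs to be targeted. Your direct approach to the forward implication is essentially the paper's converse read forwards: you use the easy ($l=1$) direction of Lemma~\ref{l:sameaccess}, and the argument is clean. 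The real divergence is in your converse. By working directly (``$\nu$ spectral $\Rightarrow$ $\mu$ spectral'') rather than contrapositively, you need a class $\pi$ derived from $\tau$ with $\pi\to\nu$ for the \emph{specific} $\nu$, which Lemma~\ref{l:sameaccess} does not deliver --- it only guarantees access to \emph{some} related class. You therefore re-derive a strengthened path-lifting statement from scratch, steering a length-$\equiv 0\pmod k$ path into $N_\nu$ by adjusting the $N_\tau$-portion and invoking Lemma~\ref{l:schur}. This construction is sound (the achievable lengths of the $N_\tau$-portion, over all starting cyclic classes and with arbitrarily long closed walks, are cofinite, hence realize any residue mod $k$), and you correctly identify it as the delicate point; it also relies on the auxiliary fact that derived classes of a common ancestor do not access each other, which you prove correctly via Theorem~\ref{t:brualdi}(i). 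But the contrapositive formulation makes all of this unnecessary: when $\nu$ is assumed non-spectral, one only needs to pass the offending access relation from $A^k$ down to $A$ via the ancestor map, which is immediate. Your treatment of part (ii) matches the paper's. Two small loose ends: you should explicitly dispose of the case $\tau=\mu$ in the converse (no constraint is needed there, and indeed your construction cannot then produce $\pi\neq\nu$), and the phrase about $N_\nu$ being ``one residue class modulo $\gcd(k,\sigma_\mu)$'' is doing no real work --- only nonemptiness of $N_\nu$ and strong connectivity of $\digr(A_{\mu\mu})$ are used.
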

\begin{proof}
(i): We will prove the following equivalent statement: For each pair
$\mu,\nu$ where $\mu$ is a class in $A$ and $\nu$ is a class derived
from $\mu$ in $A^k$, we have that $\mu$ is non-spectral if and only
if $\nu$ is non-spectral.

Observe that by Lemma~\ref{l:samenonzero}, the Perron root of $\mu$
is $0$ if and only if the Perron root of $\nu$ is $0$. In this case,
both $\mu$ and $\nu$ are non-spectral (by definition). Further, let
$\rho>0$ be the Perron root of $\mu$. Then, by
Lemma~\ref{l:sameperron}, the Perron root of $\nu$ is $\rho^k$. Let
$i$ be an index in $\nu$. It also belongs to $\mu$.

If $\mu$ is non-spectral, then $i$ is accessed in $A$ by a class
with Perron root $\rho'$ such that $\rho'>\rho$ in max algebra,
resp. $\rho'\geq\rho$ in nonnegative algebra. By
Lemma~\ref{l:sameaccess}, there is a class of $A^k$, which accesses
$i$ in $A^k$ and has Perron root $(\rho')^k$.  Since we have
$(\rho')^k>\rho^k$ in max algebra or resp. $(\rho')^k\geq \rho^k$ in
nonnegative algebra, we obtain that $\nu$, being the class to which
$i$ belongs in $A^k$, is also non-spectral.

Conversely, if $\nu$ is non-spectral, then $i$ is accessed in $A^k$
by a class $\theta$ with Perron root equal to $\Tilde{\rho}^k$ for
some $\Tilde{\rho}$, and such that $\Tilde{\rho}^k>\rho^k$ in max
algebra, resp. $\Tilde{\rho}^k\geq\rho^k$ in nonnegative algebra.
The ancestor of $\theta$ in $A$ accesses\footnote{This can be
observed immediately, or obtained by applying
Lemma~\ref{l:sameaccess}.} $i$ in $A$ and has Perron root
$\Tilde{\rho}$. Since we have $\Tilde{\rho}>\rho$ in max algebra or
resp. $\Tilde{\rho}\geq \rho$ in nonnegative algebra, we obtain that
$\mu$, being the class to which $i$ belongs in $A$, is also
non-spectral. Part (i) is proved.

(ii): This part follows directly from Lemma~\ref{l:sameperron} parts
(i) and (ii).
\end{proof}

\begin{corollary}
\label{c:samespectrum} Let $A\in\Rpnn$ and $k\geq 1$. Then
$\Lambda(A^k)=\{\rho^k\colon\rho\in\Lambda(A)\}.$
\end{corollary}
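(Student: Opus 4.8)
The plan is to read this off directly from Theorem~\ref{t:spectrum} together with the class correspondence in Theorem~\ref{t:samespectrum} and the Perron-root bookkeeping of Lemma~\ref{l:sameperron}, arguing simultaneously in both algebras. By Theorem~\ref{t:spectrum} applied to $A$ and to $A^k$ we have $\Lambda(A)=\{\rho_\mu\neq 0\colon \mu\ \text{spectral in}\ A\}$ and $\Lambda(A^k)=\{\rho_\nu\neq 0\colon \nu\ \text{spectral in}\ A^k\}$, so it suffices to match the Perron roots of spectral classes of $A$ with those of spectral classes of $A^k$. Note first that, since $k\geq 1$ and Perron roots are nonnegative, $\rho^k=0$ exactly when $\rho=0$, so the exclusion of $0$ on both sides is harmless.

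For the inclusion $\{\rho^k\colon\rho\in\Lambda(A)\}\subseteq\Lambda(A^k)$: given $\rho\in\Lambda(A)$, choose a spectral class $\mu$ of $A$ with $\rho_\mu=\rho>0$. Applying Lemma~\ref{l:sameperron}(i) to the irreducible block $A_{\mu\mu}$, every class of $A^k$ derived from $\mu$ has Perron root $\rho^k$; by Theorem~\ref{t:samespectrum}(i) these derived classes are spectral in $A^k$, and since $\rho^k>0$ this gives $\rho^k\in\Lambda(A^k)$.

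Conversely, let $\tau\in\Lambda(A^k)$ and pick a spectral class $\nu$ of $A^k$ with $\rho_\nu=\tau>0$. By Theorem~\ref{t:samespectrum}(i), $\nu$ is derived from some spectral class $\mu$ of $A$; setting $\rho:=\rho_\mu$, Lemma~\ref{l:sameperron}(i) gives $\rho_\nu=\rho^k$, i.e.\ $\tau=\rho^k$, and $\tau>0$ forces $\rho>0$, hence $\rho\in\Lambda(A)$. This establishes both inclusions.

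I do not expect a genuine obstacle here: the substance is entirely contained in Theorem~\ref{t:samespectrum} and Lemma~\ref{l:sameperron}. The only thing to be careful about is keeping the max-algebraic and nonnegative cases in step, but since both of those results are stated simultaneously for the two algebras, the argument above is verbatim the same in either setting.
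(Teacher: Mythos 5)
Your proof is correct and follows essentially the same route as the paper: reduce via Theorem~\ref{t:spectrum} to matching Perron roots of spectral classes, then invoke the two directions of Theorem~\ref{t:samespectrum}(i) together with Lemma~\ref{l:sameperron}(i) for the bookkeeping $\rho_\nu=\rho_\mu^k$. One small point in your favor: you attribute the two inclusions to the correct halves of Theorem~\ref{t:samespectrum}(i) --- the direct part (spectral classes of $A$ yield spectral derived classes in $A^k$) gives $\{\rho^k\colon\rho\in\Lambda(A)\}\subseteq\Lambda(A^k)$, and the converse part (every spectral class of $A^k$ has a spectral ancestor in $A$) gives $\Lambda(A^k)\subseteq\{\rho^k\colon\rho\in\Lambda(A)\}$ --- whereas the paper's proof states these two attributions in the reverse order, which appears to be a slip; you also make the use of Lemma~\ref{l:sameperron}(i) explicit where the paper leaves it implicit.
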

\begin{proof}
By Theorem~\ref{t:spectrum}, the nonzero eigenvalues of $A$ (resp.
$A^k$) are precisely the Perron roots of the spectral classes of $A$
(resp. $A^k$). By  Theorem~\ref{t:samespectrum}(i), if a class of
$A$ is spectral, then so is any class derived from it in $A^k$. This
implies
 that $\Lambda(A^k)\subseteq \{\rho^k\colon \rho\in\Lambda(A)\}.$ The
converse inclusion follows from 
the converse part of Theorem~\ref{t:samespectrum}(i).
\end{proof}

Let us note yet another corollary of Theorem~\ref{t:samespectrum}.
For $A\in\Rpnn$ and $\rho\geq 0$,
let $N(A,\rho)$ be the union of index sets of all classes of $A$
with Perron root $\rho$, and $N^s(A,\rho)$ be the union of index
sets of all {\bf spectral} classes of $A$ with Perron root $\rho$.
Obviously, $N^s(A,\rho)\subseteq N(A,\rho)$, and both sets (as
defined for arbitrary $\rho\geq 0$) are possibly empty.

\begin{corollary}
\label{c:specindex} Let $A\in\Rpnn$, $\rho\in\Rp$ and $k\geq 1$.
Then
\begin{itemize}
\item[{\rm (i)}] $N(A^k,\rho^k)=N(A,\rho)$,
\item[{\rm (ii)}] $N^s(A^k,\rho^k)=N^s(A,\rho)$.
\end{itemize}
\end{corollary}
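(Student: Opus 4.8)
The plan is to derive both statements directly from Theorem~\ref{t:samespectrum}(i), which describes the relation between classes of $A$ and classes of $A^k$ via the ancestor/derived correspondence. The key fact I would use is the one noted immediately after Lemma~\ref{l:samenonzero}: a class of $A^k$ is derived from a class $\mu$ of $A$ if and only if its index set is contained in that of $\mu$, and every class of $A^k$ has a (unique) ancestor in $A$. Combined with Lemma~\ref{l:sameperron}(i), which says that a class of $A$ with Perron root $\rho$ gives rise only to classes of $A^k$ with Perron root $\rho^k$, this gives a bijective-on-the-nose decomposition of index sets.

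For part~(i): Fix $\rho\in\Rp$. First suppose $\rho>0$. The index set $N(A,\rho)$ is the disjoint union of the index sets of the nontrivial classes $\mu$ of $A$ with $\rho_\mu=\rho$. Each such $\mu$ decomposes, in $A^k$, into $\gcd(k,\sigma_\mu)$ derived classes whose index sets partition $N_\mu$, and by Lemma~\ref{l:sameperron}(i) each derived class has Perron root $\rho^k$; hence $N_\mu\subseteq N(A^k,\rho^k)$, so $N(A,\rho)\subseteq N(A^k,\rho^k)$. Conversely, any class $\theta$ of $A^k$ with Perron root $\rho^k$ has an ancestor $\mu$ in $A$, and by Lemma~\ref{l:sameperron}(i) applied to $A_{\mu\mu}$ the Perron root of $\mu$ must be $\rho$ (since its $k$th power equals the Perron root of the derived class $\theta$, namely $\rho^k$, and $\rho\geq 0$); thus the index set of $\theta$, being contained in $N_\mu$, lies in $N(A,\rho)$, giving the reverse inclusion. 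The case $\rho=0$ is handled by Lemma~\ref{l:samenonzero}: the indices belonging to a trivial class (equivalently, to a class with zero Perron root) are the same for $A$ and $A^k$, and $0^k=0$, so $N(A^k,0)=N(A,0)$.

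For part~(ii): Run the same argument but restrict to spectral classes, using Theorem~\ref{t:samespectrum}(i) in place of Lemma~\ref{l:sameperron}(i) alone. If $\mu$ is a spectral class of $A$ with Perron root $\rho$ (necessarily $\rho>0$, since trivial classes are never spectral), then every class derived from $\mu$ in $A^k$ is spectral with Perron root $\rho^k$, so $N_\mu\subseteq N^s(A^k,\rho^k)$; hence $N^s(A,\rho)\subseteq N^s(A^k,\rho^k)$. Conversely, every spectral class of $A^k$ is, by the converse part of Theorem~\ref{t:samespectrum}(i), derived from a spectral class $\mu$ of $A$, and as above that ancestor has Perron root $\rho$ when the derived class has Perron root $\rho^k$; so its index set lies in $N^s(A,\rho)$. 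For $\rho=0$ both sides are empty. I do not anticipate a genuine obstacle here: the only point requiring a moment's care is that the ancestor's Perron root is determined by the derived class's Perron root, which is exactly Lemma~\ref{l:sameperron}(i), and the bookkeeping that the index sets of derived classes genuinely partition the ancestor's index set, which is Lemma~\ref{l:sameperron}(ii) (for $k$ a multiple of $\sigma$) together with Theorem~\ref{t:brualdi}(i) in general.
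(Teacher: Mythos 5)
Your proposal is correct and follows essentially the same route as the paper's own (very terse) proof: part~(i) from Lemma~\ref{l:sameperron} and Lemma~\ref{l:samenonzero}, and part~(ii) from the direct and converse parts of Theorem~\ref{t:samespectrum}(i). You simply spell out the ancestor/derived bookkeeping and the injectivity of $\rho\mapsto\rho^k$ on $\Rp$ that the paper leaves implicit.
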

\begin{proof}
(i): This part follows from Lemmas~\ref{l:sameperron}
and~\ref{l:samenonzero}.
(ii): Inclusion $N^s(A,\rho)\subseteq N^s(A^k,\rho^k)$ follows from
the direct part of Theorem~\ref{t:samespectrum}(i), and inclusion\\
$N^s(A^k,\rho^k)\subseteq N^s(A,\rho)$ follows from the converse
part of Theorem~\ref{t:samespectrum}(i).
\end{proof}


For the eigencones of $A\in\Rpnn$, the case of an arbitrary
$\rho\in\Lambda(A)$ can be reduced to the case of the principal
eigenvalue: $V(A,\rho)=V(A_{\rho},1)$ (Proposition~\ref{p:vamrho}).
Now we extend this reduction to the case of $V(A^k,\rho^k)$, for any
$k\geq 1$. As in the case of Propositon~\ref{p:vamrho}, we assume
that $A$ is in Frobenius normal form.

\begin{theorem}
\label{t:reduction}
Let $k\geq 1$ and $\rho\in \Lambda(A)$.
\begin{itemize}
\item[{\rm (i)}] The set of all indices having access to the spectral classes of $A^k$
with the eigenvalue $\rho^k$ equals $M_{\rho}$, for each $k$.
\item[{\rm (ii)}] $(A^k)_{\mrho\mrho}=\rho^k (\amrho)^k_{\mrho\mrho}$.
\item[{\rm (iii)}] $V(A^k,\rho^k)=V((\amrho)^k,1)$.
\end{itemize}
\end{theorem}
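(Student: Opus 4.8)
The plan is to derive all three parts from the results already established about spectral classes of matrix powers, together with Proposition~\ref{p:vamrho} applied to $A^k$ in place of $A$.

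For part (i), recall that $M_\rho=\{i\colon i\to\nu,\ \nu\ \text{is }(A,\rho)\text{-spectral}\}$. I would argue both inclusions using Lemma~\ref{l:sameaccess} and Theorem~\ref{t:samespectrum}(i). If $i\to\nu$ in $A$ with $\nu$ being $(A,\rho)$-spectral, then by Theorem~\ref{t:samespectrum}(i) the classes of $A^k$ derived from $\nu$ are $(A^k,\rho^k)$-spectral, and by Lemma~\ref{l:sameaccess} $i$ accesses one of them in $A^k$; hence $i$ lies in the analogous index set for $A^k$. Conversely, if $i$ accesses an $(A^k,\rho^k)$-spectral class $\theta$ in $A^k$, then by the converse part of Theorem~\ref{t:samespectrum}(i) the ancestor of $\theta$ in $A$ is $(A,\rho)$-spectral, and $i$ accesses it in $A$; so $i\in M_\rho$. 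This also shows that the set $N^s(A^k,\rho^k)$ and the access structure to it are exactly what is needed, consistent with Corollary~\ref{c:specindex}(ii).

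For part (ii), I note that $(A^k)_{M_\rho M_\rho}=(A_{M_\rho M_\rho})^k$ because $M_\rho$ is a union of classes closed under the access relation (a "downset"), so extracting the principal submatrix on $M_\rho$ commutes with taking powers — there are no paths leaving $M_\rho$ and returning. By definition $\amrho=\rho^{-1}\left(\begin{smallmatrix}0&0\\0&A_{M_\rho M_\rho}\end{smallmatrix}\right)$, so $(\amrho)_{M_\rho M_\rho}=\rho^{-1}A_{M_\rho M_\rho}$, whence $(\amrho)^k_{M_\rho M_\rho}=\rho^{-k}(A_{M_\rho M_\rho})^k=\rho^{-k}(A^k)_{M_\rho M_\rho}$. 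Rearranging gives $(A^k)_{M_\rho M_\rho}=\rho^k(\amrho)^k_{M_\rho M_\rho}$. The same identity holds in both algebras since submatrix extraction and powering commute identically in each.

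For part (iii), I would apply Proposition~\ref{p:vamrho} with $A^k$ playing the role of "$A$" and $\rho^k\in\Lambda(A^k)$ (which is legitimate by Corollary~\ref{c:samespectrum}) to obtain $V(A^k,\rho^k)=V((A^k)_{\rho^k},1)$, where $(A^k)_{\rho^k}$ is built from $A^k$ exactly as $\amrho$ is built from $A$. By part (i) the index set $M_{\rho^k}$ for $A^k$ coincides with $M_\rho$, and by part (ii) the nontrivial block of $(A^k)_{\rho^k}$ equals $(\rho^k)^{-1}(A^k)_{M_\rho M_\rho}=(\amrho)^k_{M_\rho M_\rho}$, so $(A^k)_{\rho^k}=(\amrho)^k$ as matrices (both have zero block outside $M_\rho$). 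Therefore $V(A^k,\rho^k)=V((\amrho)^k,1)$, where $1$ is the principal eigenvalue of $(\amrho)^k$. The main obstacle is the bookkeeping in part (i): one must be careful that "having access to a spectral class of $A^k$ with eigenvalue $\rho^k$" lands on exactly the same index set $M_\rho$ and not something larger or smaller — this is where Theorem~\ref{t:samespectrum}(i) (in both directions) and Lemma~\ref{l:sameaccess} do the essential work, and one should double-check that no spurious spectral classes appear in $A^k$ whose ancestors were non-spectral in $A$.
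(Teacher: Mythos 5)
Your proof is correct and follows essentially the same route as the paper: part~(i) via Theorem~\ref{t:samespectrum}(i) and Lemma~\ref{l:sameaccess} (the paper routes this through Corollary~\ref{c:specindex}(ii), which is itself a direct consequence of Theorem~\ref{t:samespectrum}(i)); part~(ii) via the observation that $M_\rho$ is closed against incoming access (the paper phrases this as ``$M_\rho$ is initial in $\digr(A)$''), so submatrix extraction commutes with powering; and part~(iii) by applying Proposition~\ref{p:vamrho} to $A^k$ and using parts~(i) and~(ii) to identify $(A^k)_{\rho^k}$ with $(\amrho)^k$. The only cosmetic difference is that you spell out the two inclusions of part~(i) in full where the paper cites the corollary.
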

\begin{proof}
(i): Apply Corollary~\ref{c:specindex}~part~(ii) and
Lemma~\ref{l:sameaccess}. (ii): Use that $M_{\rho}$ is initial in
$\digr(A)$. (iii): By Proposition~\ref{p:vamrho} we have (assuming
that $A^k$ is in Frobenius normal form) that
$V(A^k,\rho^k)=V((A_{\rho^k})^k,1)$ where, instead of~\eqref{amrho},
\begin{equation}
\label{amrhok}
\begin{split}
& A^k_{\rho^k}:=\rho^{-k}
\begin{pmatrix}
0 & 0\\
0 & A^k_{M_{\rho}^kM_{\rho}^k}
\end{pmatrix},\ \text{and}\\
& M_{\rho}^k:=\{i\colon i\to\nu,\; \nu\ \text{is $(A^k,\rho^k)$-spectral}\}
\end{split}
\end{equation}
By part (i) $M_{\rho}^k=M_{\rho}$, hence $A_{\rho^k}^k=(\amrho)^k$ and the claim follows.
\end{proof}


\subsection{Critical components}

\label{ss:critcomp}

In max algebra, when $A$ is assumed to be strictly visualized, each
component $\maxmu$ of $\crit(A)$ with cyclicity $\sigma$ corresponds
to an irreducible submatrix $A^{[1]}_{\maxmu\maxmu}$ (as in the case
of classes, $A_{\maxmu\maxmu}$ is a shorthand for
$A_{N_{\maxmu}N_{\maxmu}}$). Using the strict visualization and
Lemma~\ref{l:CAk} we see that $(A^{\otimes
k})^{[1]}_{\maxmu\maxmu}=(A^{[1]}_{\maxmu\maxmu})^{\otimes k}$.
Applying Lemma~\ref{l:sameperron}(i) to $A^{[1]}_{\maxmu\maxmu}$ we
see that $\maxmu$ gives rise to gcd$(k,\sigma)$ critical components
in $A^{\otimes k}$. As in the case of classes, these components are
said to be derived from their common ancestor $\maxmu$.

Evidently a component of $\crit(A^{\otimes k})$ is derived from a
component of $\crit(A)$ if and only if its index set is contained in
the index set of the latter component. Following this line we now
formulate an analogue of Theorem~\ref{t:samespectrum} (and some
other results).

\begin{theorem}[cf. {\cite[Theorem 8.2.6]{But:10}}, {\cite[Theorem 2.3]{CGB-07}}]
\label{t:samecritical}
Let $A\in\Rpnn$.
\begin{itemize}
\item[{\rm (i)}] For each component $\maxmu$ of $\crit(A)$
with cyclicity $\sigma$, there are gcd$(k,\sigma)$ components of
$\crit(A^{\otimes k})$ derived from it. Conversely, each
component of $\crit(A^{\otimes k})$ is derived from a component
of $\crit(A)$. If $k$ is a multiple of $\sigma$, then index sets
in the derived components are the cyclic classes of $\maxmu$.
\item[{\rm (ii)}] The sets of critical indices
of $A^{\otimes k}$ for $k=1,2,\ldots$ are identical.
\item[{\rm (iii)}] If $A$ is strictly visualized, $x_i\leq 1$ for all $i$ and $\supp(x^{[1]})$ is a cyclic class
of $\maxmu$, then $\supp((A\otimes x)^{[1]})$ is the previous cyclic class of $\maxmu$.
\end{itemize}
\end{theorem}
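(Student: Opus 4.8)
The plan is to reduce all three parts to the results on Boolean matrix powers (Theorem~\ref{t:brualdi}) and on irreducible matrices (Lemma~\ref{l:sameperron}), by exploiting the identification — valid once $A$ is strictly visualized — of the critical graph with the support of the Boolean matrix $A^{[1]}$, together with the commutation rule $(A^{\otimes k})^{[1]}=(A^{[1]})^{\otimes k}$ of~\eqref{e:unimatrix}. By Theorem~\ref{t:strictvis} and Lemma~\ref{l:CAk}(ii), I may assume throughout that $A$ is strictly visualized, so that $\rhomax(A)=1$, every power $A^{\otimes k}$ is again strictly visualized, and by Lemma~\ref{l:CAk}(i) we have $\crit(A^{\otimes k})=\crit(A)^k$; in Boolean terms, $A^{[1]}$ is the associated matrix of $\crit(A)$ and $(A^{[1]})^{\otimes k}$ is the associated matrix of $\crit(A)^k=\crit(A^{\otimes k})$.

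For part~(i): $\crit(A)$ is a disjoint union of its nontrivial strongly connected components $\maxmu$, with no edges between them; raising to the $k$th power preserves this decomposition componentwise, i.e. $\crit(A)^k$ is the disjoint union of the graphs $\maxmu^{\,k}$. Apply Theorem~\ref{t:brualdi}(i) to each $\maxmu$ with its cyclicity $\sigma$: $\maxmu^{\,k}$ splits into $\gcd(k,\sigma)$ nontrivial strongly connected components not accessing each other, and these are precisely the components of $\crit(A^{\otimes k})$ whose node sets lie in $N_{\maxmu}$ — the components derived from $\maxmu$. Since every node of $\crit(A)$ lies in exactly one $\maxmu$, every component of $\crit(A^{\otimes k})$ is derived from exactly one $\maxmu$, giving the converse statement. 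The statement about cyclic classes when $\sigma \mid k$ is exactly Theorem~\ref{t:brualdi}(ii) applied to $\maxmu$, since then $\sigma/\gcd(k,\sigma)=1$. Part~(ii) follows from part~(i): the set of critical indices of $A^{\otimes k}$ is the union of the node sets of the components of $\crit(A^{\otimes k})$, which by part~(i) equals the union of the node sets $N_{\maxmu}$ over all components $\maxmu$ of $\crit(A)$, independently of $k$. (Alternatively, one can argue directly as in Lemma~\ref{l:samenonzero}: an index is critical iff it lies on a cycle of maximum cycle geometric mean, a property preserved under powering.)

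For part~(iii): assume $A$ strictly visualized, $x_i\le 1$ for all $i$, and $\supp(x^{[1]})$ a cyclic class of the component $\maxmu$. Since $A$ and $x$ have all entries $\le 1$, we have $(A\otimes x)^{[1]}=A^{[1]}\otimes x^{[1]}$. Restricting to the node set $N_{\maxmu}$, which carries the irreducible Boolean submatrix $A^{[1]}_{\maxmu\maxmu}$ — and noting that $x^{[1]}$ is supported inside $N_{\maxmu}$, so no edges from outside $\maxmu$ into $\maxmu$ contribute (there are none in $\crit(A)$) — the claim is that $\supp(A^{[1]}_{\maxmu\maxmu}\otimes x^{[1]}_{\maxmu})$ is the previous cyclic class of $\maxmu$. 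This is precisely Lemma~\ref{l:sameperron}(iii) applied to the irreducible matrix $A^{[1]}_{\maxmu\maxmu}$ (whose unique eigenvalue is $1$, eigenvector $(1,\dots,1)$): if $\supp(y)$ is a cyclic class, then $\supp(A^{[1]}_{\maxmu\maxmu}y)$ is the previous one, which uses only that every node of $\maxmu$ has an ingoing edge and the definition of cyclic classes via path lengths modulo $\sigma$. One must also check that $\supp(A\otimes x)\cap N_{\maxmu}$ does not pick up extra nodes with value $<1$ that would survive into a larger cyclic class — but strict visualization guarantees every edge inside $\maxmu$ has weight exactly $1$, so no such loss occurs, and edges leaving $\maxmu$ lead outside the critical graph and are irrelevant to the claim about cyclic classes of $\maxmu$.

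The main obstacle is bookkeeping rather than depth: one must be careful that "derived component" is well-defined (a component of $\crit(A^{\otimes k})$ sits inside a unique $N_{\maxmu}$, by part~(i) and the disjointness of the $N_{\maxmu}$), and in part~(iii) that passing from the global statement about $(A\otimes x)^{[1]}$ to the local statement inside $\maxmu$ is legitimate — which it is, because $\crit(A)$ has no edges between distinct components, so the $\maxmu$-block of $A^{[1]}\otimes x^{[1]}$ depends only on $A^{[1]}_{\maxmu\maxmu}$ and $x^{[1]}_{\maxmu}$. Once these points are set up, each part is a direct invocation of Theorem~\ref{t:brualdi} or Lemma~\ref{l:sameperron}.
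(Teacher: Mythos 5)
Your proof is correct and follows essentially the same route as the paper: reduce to the strictly visualized case, use Lemma~\ref{l:CAk} to identify $\crit(A^{\otimes k})$ with $\crit(A)^k$, apply Theorem~\ref{t:brualdi} componentwise for (i) and (ii), and use $(A\otimes x)^{[1]}=A^{[1]}\otimes x^{[1]}$ plus the structure of cyclic classes for (iii). Your explicit invocation of Lemma~\ref{l:sameperron}(iii) for part (iii) is a tidy packaging of what the paper states more tersely as ``the definition of cyclic classes and the fact that each node in $\maxmu$ has an ingoing edge,'' and your extra bookkeeping (the $\maxmu$-block of $A^{[1]}\otimes x^{[1]}$ depends only on $A^{[1]}_{\maxmu\maxmu}$ and $x^{[1]}_{\maxmu}$, because $\crit(A)$ has no inter-component edges) fills in exactly what the paper leaves implicit.
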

\begin{proof}
(i),(ii): Both statements are based on the fact that
$\crit(A^{\otimes k})=(\crit(A))^k$, shown in Lemma~\ref{l:CAk}. To
obtain (i), also apply Theorem~\ref{t:brualdi} to a component
$\maxmu$ of $\crit(A)$. (iii): Use $(A\otimes
x)^{[1]}=A^{[1]}\otimes x^{[1]}$, the definition of cyclic classes
and the fact that each node in $\maxmu$ has an ingoing edge.
\end{proof}

\section{Describing extremals}
\label{s:extremals}

The aim of this section is to describe the extremals of the core, in
both algebras. To this end, we first give a parallel description of
extremals of eigencones (the Frobenius-Victory theorems).

\subsection{Extremals of the eigencones}
\label{ss:FV}

We now describe the principal eigencones in nonnegative linear
algebra and then in max algebra. By means of
Proposition~\ref{p:vamrho}, this description can be obviously
extended to the general case. As in Section~\ref{ss:pfelts}, both
descriptions are essentially known:
see~\cite{But:10,Fro-1912,Gau:92, Sch-86}.

We emphasize that the vectors $x^{(\mu)}$ and $x^{(\Tilde{\mu})}$ appearing below are
{\bf full-size}.

\begin{theorem}[Frobenius-Victory{\cite[Th.~3.7]{Sch-86}}]
\label{t:FVnonneg} Let $A\in\Rpnn$ have $\rhonon(A)=1$.
\begin{itemize}
\item[{\rm (i)}] Each spectral class $\mu$ with $\rhonon_{\mu}=1$ corresponds to
an eigenvector $x^{(\mu)}$, whose support consists of all
indices in the classes that have access to $\mu$, and all
vectors $x$ of $\vnon(A,1)$ with $\supp x=\supp x^{(\mu)}$ are
multiples of $x^{(\mu)}$.
\item[{\rm (ii)}] $\vnon(A,1)$ is generated by $x^{(\mu)}$ of {\rm (i)}, for $\mu$ ranging over all spectral
classes with $\rhonon_{\mu}=1$.
\item[{\rm (iii)}] $x^{(\mu)}$ of {\rm (i)} are extremals of $\vnon(A,1)$. {\rm (}Moreover,
$x^{(\mu)}$ are linearly independent.{\rm )}
\end{itemize}
\end{theorem}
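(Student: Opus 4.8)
The plan is to prove the three parts in order, working throughout in the Frobenius normal form of $A$, where the hypothesis $\rhonon(A)=1$ simply says that every class has Perron root $\le 1$. For (i), fix a spectral class $\mu$ with $\rhonon_\mu=1$ and let $S_\mu=\{i:i\to\mu\}$ be the set of indices accessing $\mu$. The first observation is that $S_\mu$ is closed under taking predecessors (if $i\to j$ and $j\in S_\mu$ then $i\in S_\mu$), so in the submatrix $B:=A_{S_\mu S_\mu}$ the class $\mu$ is the \emph{unique} final class and — this is where spectrality is used — the unique class of $B$ with Perron root $1$, every other class having Perron root $<1$. Write $B$ in block lower-triangular form with first diagonal block $A_{\mu\mu}$, remaining diagonal part $D$ (so $\rho(D)<1$, hence $(I-D)^{-1}=\sum_{k\ge0}D^k\ge 0$ exists) and coupling block $C$, so that $B(p,q)=(p,q)$ iff $A_{\mu\mu}p=p$ and $(I-D)q=Cp$. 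Take the Perron eigenvector $p>0$ of $A_{\mu\mu}$, put $q:=(I-D)^{-1}Cp$, and let $x^{(\mu)}$ be the full-size vector equal to $p$ on $N_\mu$, to $q$ on $S_\mu\setminus N_\mu$, and to $0$ elsewhere. A short path count shows $q>0$: each $i\in S_\mu\setminus N_\mu$ reaches $N_\mu$ along a path staying in $S_\mu$, so some $D^aC$ has a positive entry in row $i$ with a column in $N_\mu$, forcing $q_i>0$. Hence $\supp x^{(\mu)}=S_\mu$ exactly, and $Ax^{(\mu)}=x^{(\mu)}$ because on $S_\mu$ this is $Bv=v$ while no edge enters $S_\mu$ from outside. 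Finally, if $x\in\vnon(A,1)$ has $\supp x=S_\mu$, then $x|_{N_\mu}$ is a positive eigenvector of the irreducible $A_{\mu\mu}$, so $x|_{N_\mu}=\alpha p$ for some $\alpha>0$, and the equation $(I-D)(x|_{S_\mu\setminus N_\mu})=C(x|_{N_\mu})$ then forces $x|_{S_\mu\setminus N_\mu}=\alpha q$, i.e.\ $x=\alpha x^{(\mu)}$.

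For (ii), I would induct on $|\supp x|$, the case $x=0$ being the empty combination. Let $x\in\vnon(A,1)$ be nonzero and $T=\supp x$. From $Ax=x$ one sees there is no edge from outside $T$ into $T$, so $T$ is a union of classes and is closed under predecessors; the reduced graph of $A_{TT}$ is then a nonempty DAG and has a final class $\nu$. Restricting $Ax=x$ to $N_\nu$ (using that $\nu$ reaches no other class inside $T$) shows $x|_{N_\nu}$ is a positive eigenvector of $A_{\nu\nu}$, so $\rhonon_\nu=1$ and $x|_{N_\nu}=c\,p^{(\nu)}$ with $p^{(\nu)}>0$ the Perron eigenvector and $c>0$. \emph{Granting for the moment} that $\nu$ is spectral (see the next paragraph), normalize $x^{(\nu)}$ so that $x^{(\nu)}|_{N_\nu}=p^{(\nu)}$; the sub-eigenvector inequality $A_{S_\nu S_\nu}(x|_{S_\nu})\le x|_{S_\nu}$ gives, via the block form above, $(I-D)(x|_{S_\nu\setminus N_\nu})\ge c\,Cp^{(\nu)}$, hence $x|_{S_\nu\setminus N_\nu}\ge c\,(I-D)^{-1}Cp^{(\nu)}$, and together with equality on $N_\nu$ and $x^{(\nu)}\equiv 0$ off $S_\nu$ this yields $x\ge c\,x^{(\nu)}$. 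Therefore $y:=x-c\,x^{(\nu)}\in\vnon(A,1)$ and $y$ vanishes on the nonempty set $N_\nu$, so $|\supp y|<|\supp x|$; by induction $y$, and hence $x$, is a nonnegative combination of the vectors $x^{(\mu)}$.

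The key step, and the one I expect to be the main obstacle to present cleanly, is the italicized claim: a class $\nu$ that is final in $T=\supp x$ must be \emph{spectral}. I would argue by contradiction. If $\nu$ is not spectral, some class $\lambda\ne\nu$ with $\lambda\to\nu$ has $\rhonon_\lambda\ge\rhonon_\nu=1$, hence $\rhonon_\lambda=1$; since $\lambda$ accesses $\nu\subseteq T$ and $T$ is closed under predecessors, $\lambda\subseteq T$, so $x|_{N_\lambda}>0$. Restricting $Ax=x$ to $N_\lambda$ and discarding the nonnegative contributions of indices outside $N_\lambda$ gives $A_{\lambda\lambda}(x|_{N_\lambda})\le x|_{N_\lambda}$. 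Pairing with the strictly positive left Perron eigenvector $w$ of $A_{\lambda\lambda}$ (so $w^{T}A_{\lambda\lambda}=w^{T}$), which exists because $A_{\lambda\lambda}$ is irreducible with Perron root $1$, turns this into $w^{T}\bigl(x|_{N_\lambda}-A_{\lambda\lambda}(x|_{N_\lambda})\bigr)=0$ and so forces the equality $A_{\lambda\lambda}(x|_{N_\lambda})=x|_{N_\lambda}$; but equality at every index of $N_\lambda$ means no edge leaves $N_\lambda$ to a node of $T$ outside $N_\lambda$, contradicting $\lambda\to\nu$ with $\nu\subseteq T$, $\nu\ne\lambda$. (Alternatively, one could quote the classical criterion that a nonnegative matrix has a strictly positive eigenvector for its spectral radius iff, in its Frobenius normal form, the classes of maximal Perron root are precisely the final classes.) This is the only place where the definition of a spectral class is used in full strength.

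For (iii), extremality of $x^{(\mu)}$ follows from (i)–(ii): if $x^{(\mu)}=u+v$ with $u,v\in\vnon(A,1)$, then $\supp u\subseteq\supp x^{(\mu)}=S_\mu$, and writing $u=\sum_{\mu'}d_{\mu'}x^{(\mu')}$ as in (ii), every $S_{\mu'}$ with $d_{\mu'}>0$ lies in $S_\mu$, i.e.\ $\mu'\to\mu$; spectrality of $\mu$ then forces $\mu'=\mu$, since any accessor $\mu'\ne\mu$ of the spectral class $\mu$ has Perron root $<1$ and hence cannot itself be a spectral class with root $1$. So $u$, and likewise $v$, is a multiple of $x^{(\mu)}$, which is extremality. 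The same remark — that $x^{(\mu)}$ is the only member of the family nonzero on $N_\mu$ — gives linear independence: evaluating $\sum_\mu\gamma_\mu x^{(\mu)}=0$ on $N_\mu$ and using $p^{(\mu)}>0$ yields $\gamma_\mu=0$ for each $\mu$.
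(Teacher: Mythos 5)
Your proof is correct and complete. One thing to note up front: the paper does not actually prove Theorem~\ref{t:FVnonneg} --- it is stated as a known result with a citation to Schneider's 1986 survey (and traced back to Frobenius 1912 in Section~\ref{ss:pfelts}), so there is no ``paper proof'' to compare against. The one piece of reasoning the paper does offer --- the remark after the theorem that extremality and linear independence of the $x^{(\mu)}$ follow from the support description in (i) together with the fact that spectral classes associated with the same $\rho$ do not access each other --- is exactly your argument in part (iii). Your construction in (i), using the Schur block form $B=\begin{pmatrix}A_{\mu\mu}&0\\ C&D\end{pmatrix}$ on the set $S_\mu$ of indices accessing $\mu$ together with the Neumann series $(I-D)^{-1}$, is the classical Frobenius--Victory construction; the path-counting positivity of $q$ and the reduction to the irreducible Perron eigenvector of $A_{\mu\mu}$ for uniqueness on support $S_\mu$ are both standard and correctly carried out. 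Your induction on $|\supp x|$ in (ii) is likewise the classical argument, and the ``key step'' you flag --- that a final class $\nu$ of $\supp x$ (which necessarily has $\rhonon_\nu=1$ because $x|_{N_\nu}>0$ satisfies $A_{\nu\nu}(x|_{N_\nu})=x|_{N_\nu}$) must be spectral --- is handled correctly by the left-Perron-eigenvector argument: $w^\top A_{\lambda\lambda}=w^\top$ with $w>0$ turns the sub-eigenvector inequality on $N_\lambda$ into an equality, which contradicts $\lambda\to\nu$ inside $T$. The deduction $y=x-c\,x^{(\nu)}\ge 0$ via $(I-D)(x|_{S_\nu\setminus N_\nu})\ge cCp^{(\nu)}$ and $(I-D)^{-1}\ge 0$ is clean, and $y$ vanishing on the nonempty set $N_\nu$ drives the induction. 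Part (iii) then correctly uses (i) and (ii), together with the observation that distinct spectral classes with Perron root~$1$ cannot access each other, to get extremality and usual (signed) linear independence. In short: you have supplied a full, standard proof of a result the paper only cites, and it is consistent with the one hint the paper does give.
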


Note that the extremality and the {\bf usual} linear independence of
$x^{(\mu)}$ (involving linear combinations with possibly negative
coefficients) can be deduced from the description of supports in
part (i), and from the fact that in nonnegative algebra, spectral
classes associated with the same $\rho$ do not access each other.
This linear independence also means that $\vnon(A,1)$ is a
simplicial cone. See also~\cite[Th.~4.1]{Sch-86}.

\begin{theorem}[{\cite[Th.~4.3.5]{But:10}}, {\cite[Th.~2.8]{SSB}}]
\label{t:FVmaxalg} Let $A\in\Rpnn$ have $\rhomax(A)=1$.
\begin{itemize}
\item[{\rm (i)}] Each component $\maxmu$ of $\crit(A)$ corresponds to
an eigenvector $x^{(\maxmu)}$ defined as one of the columns $A^*_{\cdot i}$
with $i\in N_{\maxmu}$, all columns with $i\in N_{\maxmu}$ being multiples of each other.
\item[{\rm (i')}] Each component $\maxmu$ of $\crit(A)$ is contained in a (spectral) class $\mu$
with $\rhomax_{\mu}=1$, and the support of each $x^{(\maxmu)}$
of {\rm (i)} consists of all indices in the classes that have
access to $\mu$.

\item[{\rm (ii)}] $\vmax(A,1)$ is generated by $x^{(\maxmu)}$ of {\rm (i)}, for $\maxmu$ ranging over all
components of $\crit(A)$.
\item[{\rm (iii)}] $x^{(\maxmu)}$ of {\rm (i)} are extremals in $\vmax(A,1)$. {\rm (}Moreover, $x^{(\maxmu)}$ are
strongly linearly independent in the sense of~\cite{But-03}.{\rm
)}
\end{itemize}
\end{theorem}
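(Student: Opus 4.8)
The plan is to assume, after a strict visualization scaling (Theorem~\ref{t:strictvis}), that $A$ is strictly visualized, so that $\lambda(A)=\rhomax(A)=1$, every critical edge has weight $1$, every noncritical edge has weight strictly less than $1$, and the Kleene star $A^*=I\oplus A\oplus\dots\oplus A^{\otimes(n-1)}$ converges. The starting point for (i) is the classical fact that the columns $A^*_{\cdot i}$ with $i$ critical are eigenvectors of $A$ for the eigenvalue $1$: indeed $A\otimes A^*=A^*$ whenever the series converges, so every column of $A^*$ satisfies $A\otimes A^*_{\cdot i}\le A^*_{\cdot i}$, and for a critical $i$ equality holds in row $i$ because $a^*_{ii}=1$ is attained by going around the critical cycle through $i$; a short argument (or a reference to~\cite{But:10,SSB}) then propagates equality to all coordinates. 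Next I would show that if $i,j$ lie in the same critical component $\maxmu$ then $A^*_{\cdot i}$ and $A^*_{\cdot j}$ are proportional: since $\maxmu$ is strongly connected with critical (weight-$1$) edges, $a^*_{ij}=a^*_{ji}=1$, and from $A^*\otimes A^*=A^*$ one gets $A^*_{\cdot i}\ge a^*_{ij}A^*_{\cdot j}=A^*_{\cdot j}$ and symmetrically, hence equality. This makes $x^{(\maxmu)}:=A^*_{\cdot i}$ ($i\in N_{\maxmu}$) well defined up to a scalar and proves (i).

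For (i$'$), note that $\supp(A^*_{\cdot i})=\{\,k:k\to i\,\}$ directly from the path interpretation of $A^*$ (entry $a^*_{ki}$ is the largest weight of a $k$--$i$ path, which is nonzero iff such a path exists), so the support is the set of all indices accessing $i$; since $i$ is critical it lies in some class $\mu$, that class has $\rhomax_\mu=1$ (a critical cycle through $i$ forces $\lambda(A_{\mu\mu})=1$), and ``accessing $i$'' coincides with ``accessing $\mu$''; finally $\mu$ is spectral because no class accessing $\mu$ can have Perron root exceeding $1=\rhomax(A)$, which is exactly the max-algebraic spectrality condition~\eqref{e:speclass}. For (ii), the essential-uniqueness direction: given any $x\in\vmax(A,1)$, from $A\otimes x=x$ one iterates to $A^{\otimes t}\otimes x=x$, hence $A^*\otimes x=x$, i.e. $x=\bigoplus_i a^*_{\cdot i}x_i=\bigoplus_i x_i A^*_{\cdot i}$; then one argues that only the critical columns are needed, because for a noncritical $i$ one has $x_i=(A^*\otimes x)_i=\bigoplus_j a^*_{ij}x_j$ with the diagonal term $a^*_{ii}x_i$ strictly less than $x_i$ (noncritical nodes have $a^*_{ii}<1$, as any cycle through them is noncritical), so $x_i=a^*_{ij}x_j$ for some $j$ with a strictly shorter "reach", and descending this way terminates at a critical index — giving $x=\bigoplus_{\maxmu}\alpha_{\maxmu}x^{(\maxmu)}$.

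For (iii), extremality of $x^{(\maxmu)}$ in $\vmax(A,1)$ follows from the support description in (i$'$): if $x^{(\maxmu)}=u\oplus v$ with $u,v\in\vmax(A,1)$, expand $u,v$ in the generators from (ii); a generator $x^{(\Tilde\nu)}$ can contribute to coordinate $i\in N_{\maxmu}$ only if $i\in\supp(x^{(\Tilde\nu)})$, i.e. only if $\maxmu$ accesses the class of $\Tilde\nu$, while $x^{(\maxmu)}_i=1$ on the whole critical component forces the contributing generator to dominate there; chasing this shows the only generator that can reach the "top" on all of $N_{\maxmu}$ is $x^{(\maxmu)}$ itself, so one of $u,v$ equals $x^{(\maxmu)}$. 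The strong linear independence claim in parentheses is then the statement that the matrix whose columns are the $x^{(\maxmu)}$ has a strongly independent set of columns in the sense of~\cite{But-03}; this can be cited from~\cite{SSB,But:10} or deduced from the fact that each $x^{(\maxmu)}$ attains its maximum $1$ on $N_{\maxmu}$ while every other generator is strictly sub-unit there. The main obstacle I anticipate is the noncritical-descent argument in (ii): making precise the notion that each step $x_i=a^*_{ij}x_j$ moves to a node that is "closer to the critical graph" and that this process must terminate — the clean way is to induct on the length of a longest path from $i$ to the critical graph, using strict visualization to guarantee $a^*_{ij}<1$ off the critical graph, so that repeated use of $x=A^*\otimes x$ cannot cycle among noncritical nodes.
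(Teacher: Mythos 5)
The paper does not prove Theorem~\ref{t:FVmaxalg}: it cites it from \cite[Th.~4.3.5]{But:10} and \cite[Th.~2.8]{SSB}, and only remarks that part (i$'$) --- which is ``not explicitly stated in the references'' --- follows from (i) together with the path interpretation of~$A^*$. Your proposal therefore goes beyond what the paper offers, but it is in substance the standard proof given in those references (strict visualization, Kleene-star columns as eigenvectors, descent from noncritical to critical indices using $a^*_{ii}<1$), so it is the right route and is essentially correct.

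Two small points of accuracy. First, in (i) your ``propagation'' step runs backwards: for a critical $i$ the equality $A\otimes A^*_{\cdot i}=A^*_{\cdot i}$ holds in every coordinate $k\neq i$ automatically, because $(A\otimes A^*)_{ki}=a^{+}_{ki}=a^*_{ki}$ once $k\neq i$; the criticality of $i$ is needed exactly and only for the diagonal coordinate $k=i$, where one needs $a^{+}_{ii}\geq 1$, which is precisely the existence of a weight-$1$ cycle through $i$. No propagation is required. Second, in (iii) the ``chasing'' step can be tightened: writing $u=\bigoplus_\nu\alpha_\nu x^{(\Tilde\nu)}$, $v=\bigoplus_\nu\beta_\nu x^{(\Tilde\nu)}$ and $\gamma_\nu:=\alpha_\nu\oplus\beta_\nu$, evaluate at any $j\in N_{\Tilde\nu}$ to see $\gamma_\nu\leq 1$ for all $\nu$ (else the $j$-th coordinate of $u\oplus v$ would exceed $x^{(\maxmu)}_j\leq 1$); then evaluating at $i\in N_{\maxmu}$ and using that $x^{(\Tilde\nu)}_i<1$ whenever $\Tilde\nu\neq\maxmu$ (any weight-$1$ $\itoj$ path under strict visualization would consist of critical edges, forcing $i,j$ into the same critical component) gives $\gamma_{\maxmu}=1$, hence $\alpha_{\maxmu}=1$ or $\beta_{\maxmu}=1$, hence $u\geq x^{(\maxmu)}$ or $v\geq x^{(\maxmu)}$, and combined with $u,v\leq x^{(\maxmu)}$ this yields $u=x^{(\maxmu)}$ or $v=x^{(\maxmu)}$. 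With these repairs the argument is complete; your noncritical-descent worry in (ii) is correctly resolved by observing that a cycle of noncritical indices with $\prod a^*_{\cdot\cdot}=1$ would force all its $A^*$-edges, and hence the underlying $\digr(A)$-edges, to have weight $1$, contradicting noncriticality under strict visualization.
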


To verify (i'), not explicitly stated in the references, use (i) and the path interpretation of~$A^*$.

Vectors $x^{(\maxmu)}$ of Theorem~\ref{t:FVmaxalg} are also called the
{\em fundamental eigenvectors} of $A$, in max algebra.
Applying a strict visualization scaling (Theorem~\ref{t:strictvis}) allows us to get further details on
these fundamental eigenvectors.

\begin{proposition}[{\cite[Prop.~4.1]{SSB}}]
\label{p:xmuvis} Let $A\in\Rpnn$ be strictly visualized (in
particular, $\rhomax(A)=1$). Then
\begin{itemize}
\item[{\rm (i)}] For each component $\maxmu$ of $\crit(A)$,
$x^{(\maxmu)}$ of Theorem~\ref{t:FVmaxalg} can be canonically chosen as $A^*_{\cdot i}$ for any $i\in N_{\maxmu}$,
all columns with $i\in N_{\maxmu}$ being {\em equal} to each other.
\item[{\rm (ii)}] $x^{(\maxmu)}_i\leq 1$ for all $i$. Moreover,
$\supp(x^{(\maxmu)[1]})=N_{\maxmu}$.
\end{itemize}
\end{proposition}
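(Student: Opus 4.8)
The plan is to work entirely with the Kleene star $A^*$, exploiting that $A$ is strictly visualized with $\lambda(A)=1$. First I would record the facts needed. Since $\rhomax(A)=1$, the series~\eqref{def:kleenestar} converges and $A^*=I\oplus A\oplus\ldots\oplus A^{\otimes(n-1)}$; since every edge of $\digr(A)$ has weight at most $1$, every path has weight at most $1$, so all entries of $A^*$ are at most $1$, while $a^*_{ii}=1$ for every $i$ (from the $I$ term). Moreover, for $i,j$ in the same critical component $\maxmu$ there is a critical path from $i$ to $j$, all of whose edges have weight $1$ by strict visualization, hence $a^*_{ij}\geq 1$; combined with $a^*_{ij}\leq 1$ this gives $a^*_{ij}=1$, and symmetrically $a^*_{ji}=1$.

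For part~(i), I would use that $A^*$ is idempotent, $A^*\otimes A^*=A^*$, which follows from the concatenation inequality $a^*_{jk}a^*_{ki}\leq a^*_{ji}$ together with $a^*_{ii}=1$. Writing the $i$th column of $A^*\otimes A^*$ as $\bigoplus_k a^*_{ki}A^*_{\cdot k}$ and using $a^*_{ji}=1$ for $i,j\in N_{\maxmu}$, we obtain $A^*_{\cdot i}\geq A^*_{\cdot j}$, and symmetrically $A^*_{\cdot j}\geq A^*_{\cdot i}$, so $A^*_{\cdot i}=A^*_{\cdot j}$ for all $i,j\in N_{\maxmu}$. By Theorem~\ref{t:FVmaxalg}(i), any one of these equal columns may be taken as $x^{(\maxmu)}$, which is the canonical choice asserted.

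For part~(ii), fix $i\in N_{\maxmu}$ and set $x^{(\maxmu)}=A^*_{\cdot i}$; then $x^{(\maxmu)}_j=a^*_{ji}\leq 1$ by the first paragraph, proving the first claim. For the support, $x^{(\maxmu)[1]}_j=1$ if and only if $a^*_{ji}=1$. Since the maximum defining $a^*_{ji}$ is attained by a path from $j$ to $i$ of length at most $n-1$ (or by the empty path if $j=i$), $a^*_{ji}=1$ holds if and only if there is a path from $j$ to $i$ of weight $1$; as every edge has weight at most $1$, such a path consists entirely of critical edges, i.e.\ lies in $\crit(A)$. Because $\crit(A)$ has no edges joining distinct strongly connected components and $i\in N_{\maxmu}$, a descending induction along this path forces every one of its nodes, in particular $j$, to lie in $N_{\maxmu}$. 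Conversely, if $j\in N_{\maxmu}$ then strong connectedness of $\maxmu$ supplies a critical path from $j$ to $i$, whence $a^*_{ji}=1$. Therefore $\supp(x^{(\maxmu)[1]})=N_{\maxmu}$.

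The bookkeeping with $A^*$ is routine; the one point that requires care is the upgrade from ``multiples of each other'' in Theorem~\ref{t:FVmaxalg}(i) to ``equal'', and this is exactly where the idempotency of $A^*$ and the identity $a^*_{ij}=1$ on each critical component are used. The support computation then needs only the elementary observation that a weight-$1$ path in a strictly visualized matrix is a critical path.
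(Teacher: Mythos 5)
The paper does not prove this proposition itself but cites it from \cite{SSB}, so there is no in-paper argument to compare against; your proof is, however, correct and self-contained. You correctly observe that $A^*=I\oplus A\oplus\dots\oplus A^{\otimes(n-1)}$ has all entries at most $1$ with unit diagonal, deduce $a^*_{ij}=1$ for $i,j$ in the same component $\maxmu$ from the existence of a critical (weight-$1$) path, and then use the idempotency $A^*\otimes A^*=A^*$ (which you justify from the concatenation inequality and $a^*_{ii}=1$) together with $a^*_{ji}=1$ to upgrade the ``multiples of each other'' of Theorem~\ref{t:FVmaxalg}(i) to genuine equality of the columns $A^*_{\cdot i}$, $i\in N_{\maxmu}$. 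The support computation is likewise sound: since $A^*$ is a finite max over paths of length at most $n-1$ and all edge weights are at most $1$, $a^*_{ji}=1$ forces a weight-$1$ (hence, by strict visualization, critical) path from $j$ to $i$, and because $\crit(A)$ has no edges between distinct components, every node on that path---in particular $j$---lies in $N_{\maxmu}$, while the converse follows from strong connectedness of $\maxmu$. No gaps.
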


\subsection{Extremals of the core}
\label{ss:extremals}

Let us start with the following observation in both algebras.

\begin{proposition}
\label{p:extremals}
For each $k\geq 1$, the set of extremals of $V^{\Sigma}(A^k)$ is the union of the sets of extremals
of $V(A^k,\rho^k)$ for $\rho\in\Lambda(A)$.
\end{proposition}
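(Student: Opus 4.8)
The plan is to show two inclusions between the set of extremals of the Minkowski sum $V^{\Sigma}(A^k) = \sum_{\rho\in\Lambda(A)} V(A^k,\rho^k)$ and the union $\bigcup_{\rho\in\Lambda(A)} \ext(V(A^k,\rho^k))$. Throughout I would work with $A^k$ in place of $A$, so it suffices to prove the statement for $k=1$, i.e., that $\ext(V^{\Sigma}(A)) = \bigcup_{\rho\in\Lambda(A)}\ext(V(A,\rho))$; by Corollary~\ref{c:samespectrum} the eigenvalues of $A^k$ are exactly $\{\rho^k:\rho\in\Lambda(A)\}$, so no generality is lost. The argument is identical in max algebra and in nonnegative algebra, since both rely only on the cone axioms and the Frobenius--Victory description of supports.

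First I would prove $\bigcup_{\rho}\ext(V(A,\rho))\subseteq\ext(V^{\Sigma}(A))$. Let $z$ be an extremal of some $V(A,\rho)$, and suppose $z = u + v$ (or $z = u\oplus v$ in max algebra) with $u,v\in V^{\Sigma}(A)$. Write $u = \sum_{\tau\in\Lambda(A)} u_\tau$ and $v=\sum_\tau v_\tau$ with $u_\tau,v_\tau\in V(A,\tau)$. The key point is to apply $A$ repeatedly: since $A(u_\tau)=\tau u_\tau$, we get, for every $t$, that $\rho^{-t}A^t z = \sum_\tau \rho^{-t}\tau^t(u_\tau+v_\tau)$. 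As $t\to\infty$ (or by a direct argument separating the $\tau>\rho$, $\tau=\rho$, $\tau<\rho$ contributions), the only surviving terms are those with $\tau=\rho$; more robustly, one can argue that $z\in V(A,\rho)$ forces $u,v$ to lie in $V(A,\rho)$ as well — indeed $Az = \rho z$ gives $\sum_\tau \tau(u_\tau+v_\tau) = \rho\sum_\tau(u_\tau+v_\tau)$, and comparing components using that eigenvectors for distinct eigenvalues are ``independent'' in the relevant sense yields $u_\tau+v_\tau = 0$ for $\tau\neq\rho$. Then $z = u+v$ with $u,v\in V(A,\rho)$, and extremality of $z$ in $V(A,\rho)$ finishes this direction.

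Next, the reverse inclusion $\ext(V^{\Sigma}(A))\subseteq\bigcup_\rho\ext(V(A,\rho))$. Let $z$ be extremal in $V^{\Sigma}(A)$ and write $z = \sum_{\rho\in\Lambda(A)} z_\rho$ with $z_\rho\in V(A,\rho)$. Since $z$ is extremal and each $z_\rho\in V^{\Sigma}(A)$, the defining property of extremality forces all the nonzero summands $z_\rho$ to be proportional to $z$; in particular $z$ is proportional to some single $z_{\rho_0}$, so $z\in V(A,\rho_0)$. It remains to see that $z$ is extremal \emph{within} $V(A,\rho_0)$, which is immediate: if $z = u+v$ with $u,v\in V(A,\rho_0)\subseteq V^{\Sigma}(A)$, then extremality of $z$ in the bigger cone gives $u,v$ proportional to $z$. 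Hence $z\in\ext(V(A,\rho_0))$.

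The main obstacle is the step in the first inclusion where one must show that an eigenvector equation $Az=\rho z$ with $z$ decomposed over eigencones of several eigenvalues forces the ``wrong-eigenvalue'' components to vanish. In nonnegative algebra this is the usual linear independence of eigenvectors for distinct eigenvalues; in max algebra one should instead iterate $A$ and pass to the limit of $\rho^{-t}A^t$, using that $\Lambda(A)$ is finite and that the components with $\tau\neq\rho$ are scaled by $(\tau/\rho)^t\to\{0,\infty\}$ — combined with $\rho^{-t}A^t z = z$, this pins down $z\in V(A,\rho)$. A cleaner alternative, which I would actually prefer to write, avoids this entirely: it only needs that $z$ extremal in $V^{\Sigma}(A)$ implies $z$ lies in one $V(A,\rho_0)$ (the easy direction above), together with the trivial observation that $V(A,\rho_0)\subseteq V^{\Sigma}(A)$ makes extremality in the subcone equivalent to extremality in the supercone for vectors already known to lie in the subcone. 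So in fact both inclusions reduce to elementary cone manipulations plus the remark that $V^{\Sigma}(A)$ is a sum and each $V(A,\rho)$ is a summand, and the only genuinely nontrivial input is that a nonzero vector cannot lie in $V(A,\rho)\cap V(A,\tau)$ for $\rho\neq\tau$, which is clear from $Az=\rho z=\tau z$.
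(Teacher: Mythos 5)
Your main argument is correct and is close in spirit to the paper's: the paper proves the inclusion $\ext(V(A,\rho))\subseteq\ext(V^{\Sigma}(A))$ by a growth-rate case analysis (applying $A^t$ to eliminate eigenvalue summands $>\rho$, then arguing that summands $<\rho$ are redundant), whereas your version iterates $\rho^{-t}A^t$ and passes to the limit, handling both cases simultaneously. The easy inclusion $\ext(V^{\Sigma}(A))\subseteq\bigcup_{\rho}\ext(V(A,\rho))$ is done the same way in both.

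However, the ``cleaner alternative'' you say you would actually prefer to write is not correct and would leave a gap. You assert that, for a vector already known to lie in $V(A,\rho_0)\subseteq V^{\Sigma}(A)$, extremality in the subcone is equivalent to extremality in the supercone. Only one direction of this equivalence holds: extremality in the supercone trivially implies extremality in the subcone (every decomposition inside $V(A,\rho_0)$ is also one inside $V^{\Sigma}(A)$), but the converse can fail, because a decomposition $z=u+v$ with $u,v\in V^{\Sigma}(A)$ need not have $u,v\in V(A,\rho_0)$. This converse is exactly the direction you need for $\ext(V(A,\rho))\subseteq\ext(V^{\Sigma}(A))$, and it is exactly what the growth-rate argument is for. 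The disjointness $V(A,\rho)\cap V(A,\tau)=\{0\}$ that you cite as the ``only genuinely nontrivial input'' is far too weak: it rules out a \emph{single} vector from lying in two eigencones, but it does not by itself prevent an extremal $z\in V(A,\rho)$ from decomposing as $z=u+v$ with $u,v$ having components in other eigencones. In nonnegative algebra one can replace the growth-rate argument by true linear independence of eigenspaces over $\mathbb{R}$, but no such notion is available in max algebra. So please keep the $\rho^{-t}A^t$ limit argument (or the paper's case split on $\rho_\nu>1$, $\rho_\nu<1$, $\rho_\nu=1$); the shortcut does not close.

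One small remark on your limit argument in max algebra: from $z=\bigoplus_{\tau}(\tau/\rho)^t(u_\tau\oplus v_\tau)$ you can only conclude $z=u_\rho\oplus v_\rho$ (the low-eigenvalue terms are dominated, not necessarily zero). That is still enough: extremality of $z$ in $V(A,\rho)$ gives $z=u_\rho$ (say), and then $z=u_\rho\le u\le u\oplus v=z$ forces $u=z$, so $z$ is extremal in $V^{\Sigma}(A)$. Worth spelling this out, since in max algebra one cannot deduce $u_\tau=0$ for $\tau<\rho$.
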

\begin{proof}
Due to the fact that
$\Lambda(A^k)=\{\rho^k\colon\rho\in\Lambda(A)\}$, we can assume
without loss of generality that $k=1$.

1. As $V^{\Sigma}(A)$ is the sum of $V(A,\rho)$ for $\rho\in\Lambda(A)$, it is generated by
the extremals of $V(A,\rho)$ for $\rho\in\Lambda(A)$. Hence each extremal of $V^{\Sigma}(A)$ is
an extremal of $V(A,\rho)$ for some $\rho\in\Lambda(A)$.

2. Let $x\in V(A,\rho_{\mu})$, for some spectral class $\mu$, be
extremal. Assume without loss of generality that $\rho_{\mu}=1$, and
by contradiction that there exist vectors $y^{\kappa}$, all of them
extremals of $V^{\Sigma}(A)$, such that $x=\sum_{\kappa}
y^{\kappa}$. By above, all vectors $y^{\kappa}$ are eigenvectors of
$A$. If there is $y^{\kappa}$ associated with an eigenvalue
$\rho_{\nu}>1$, then applying $A^t$ we obtain $x= (\rho_{\nu})^t
y^{\kappa}+\ldots$, which is impossible at large enough $t$. So
$\rho_{\nu}\leq 1$. With this in mind, if there is $y^{\kappa}$
associated with $\rho_{\nu}<1$, then 1) in nonnegative algebra we
obtain $Ax>A\sum_{\kappa} y^{\kappa}$, a contradiction; 2) in max
algebra, all nonzero entries of  $A\otimes y^{\kappa}$ go below the
corresponding entries of $x$ meaning that $y^{\kappa}$ is redundant.
Thus we are left only with $y^{\kappa}$ associated with
$\rhomax_{\nu}=1$, which is a contradiction: an extremal $x\in
\vmax(A,1)$ appears as a ``sum'' of other extremals of $\vmax(A,1)$
not proportional to $x$.
\end{proof}

A vector $x\in\Rpn$ is called {\em normalized} if $\max x_i=1$.
Recall the notation $\sigma_{\rho}$ introduced in Section~\ref{s:key}.

\begin{theorem}[cf.~{\cite[Theorem~4.7]{TS-94}}]
\label{t:tam-schneider}
Let $A\in\Rpnn$.
\begin{itemize}
\item[{\rm (i)}] The set of extremals of $\core(A)$ is the union of the sets of extremals of
$V(A^{\sigma_{\rho}},\rho^{\sigma_{\rho}})$ for all
$\rho\in\Lambda(A)$.
\item[{\rm (ii)}] {\bf In nonnegative algebra}, each spectral class $\mu$ with cyclicity $\sigma_{\mu}$
corresponds to a set of distinct $\sigma_{\mu}$ normalized
extremals of $\corenon(A)$, such that there exists an index in
their support that belongs to $\mu$, and each index in their
support has access to $\mu$.\\
{\bf In max algebra}, each critical component $\maxmu$ with
cyclicity $\sigma_{\Tilde{\mu}}$ associated with some
$\rho\in\Lmax(A)$ corresponds to a set of distinct
$\sigma_{\Tilde{\mu}}$ normalized extremals $x$ of
$\coremax(A)$, which are (normalized) columns of
$(\amrho^{\sigma_{\rho}})^*$ with indices in $N_{\maxmu}$.
\item[{\rm (iii)}] Each set of extremals described in {\rm (ii)} forms a simple
cycle under the action of $A$.
\item[{\rm (iv)}] There are no normalized extremals other than those described in {\rm (ii)}.
{\bf In nonnegative algebra,} the total number of normalized extremals equals the sum of cyclicities of all spectral classes of $A$.
{\bf In max algebra,} the total number of normalized extremals equals the sum of cyclicities of all
critical components of $A$.
\end{itemize}
\end{theorem}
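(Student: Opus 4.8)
The strategy is to reduce everything to the study of the eigencones $V(A^{\sigma_\rho},\rho^{\sigma_\rho})$ via the Main Theorems, and then apply the Frobenius--Victory theorems to these eigencones. By Main Theorem~\ref{t:core} we have $\core(A)=V^{\Sigma}(A^{\sigma_\Lambda})$, and by Main Theorem~\ref{t:periodicity}(i) the sequence $\{V(A^k,\rho^k)\}_k$ is $\sigma_\rho$-periodic with $V(A^k,\rho^k)\subseteq V(A^{\sigma_\rho},\rho^{\sigma_\rho})$; since $\sigma_\rho\mid\sigma_\Lambda$, this gives $V(A^{\sigma_\Lambda},\rho^{\sigma_\Lambda})=V(A^{\sigma_\rho},\rho^{\sigma_\rho})$ for each $\rho$. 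Combining with Proposition~\ref{p:extremals} applied to $k=\sigma_\Lambda$, part~(i) follows: the extremals of $\core(A)=V^{\Sigma}(A^{\sigma_\Lambda})$ are the union over $\rho\in\Lambda(A)$ of the extremals of $V(A^{\sigma_\Lambda},\rho^{\sigma_\Lambda})=V(A^{\sigma_\rho},\rho^{\sigma_\rho})$.

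For part~(ii) I would fix $\rho\in\Lambda(A)$ and work with $B:=A^{\sigma_\rho}$, whose eigenvalue of interest is $\rho^{\sigma_\rho}$. Using Theorem~\ref{t:reduction}(iii), $V(A^{\sigma_\rho},\rho^{\sigma_\rho})=V((\amrho)^{\sigma_\rho},1)$, so I may assume we are describing the principal eigencone of a matrix with principal eigenvalue $1$. Now invoke Theorem~\ref{t:samespectrum}(ii) (nonnegative) or Theorem~\ref{t:samecritical}(i) (max): a spectral class $\mu$ of $A_\rho$ with cyclicity $\sigma_\mu$ (resp.\ a critical component $\maxmu$ with cyclicity $\sigma_{\maxmu}$) splits in the $\sigma_\rho$-th power into exactly $\sigma_\mu$ (resp.\ $\sigma_{\maxmu}$) derived spectral classes (resp.\ critical components), because $\sigma_\mu\mid\sigma_\rho$ in nonnegative algebra, resp.\ $\sigma_{\maxmu}\mid\sigma_\rho$ in max algebra. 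Applying the Frobenius--Victory theorems (Theorem~\ref{t:FVnonneg} or Theorem~\ref{t:FVmaxalg}, in the visualized form of Proposition~\ref{p:xmuvis} for the max case) to $(\amrho)^{\sigma_\rho}$, each derived class/component contributes one extremal eigenvector; the support description comes directly from Theorem~\ref{t:FVnonneg}(i) / Theorem~\ref{t:FVmaxalg}(i'), together with Corollary~\ref{c:specindex} and Lemma~\ref{l:sameaccess} to translate ``access to a derived class'' back into ``access to $\mu$''. In the max case the explicit formula ``columns of $(\amrho^{\sigma_\rho})^*$ with indices in $N_{\maxmu}$'' is read off from Theorem~\ref{t:FVmaxalg}(i) / Proposition~\ref{p:xmuvis}(i). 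For normalization, divide by the max coordinate.

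For part~(iii), this is essentially Pullman's permutation result: Proposition~\ref{p:permute} says $A$ permutes the extremal generators of $\core(A)$. I would show that the $\sigma_\mu$ extremals coming from one ancestor class $\mu$ form a single orbit of length $\sigma_\mu$ under $A$. In the nonnegative case use Lemma~\ref{l:sameperron}(iii): applying $A$ cyclically shifts the supports through the $\sigma_\mu$ cyclic classes of $\mu$, so the $\sigma_\mu$ extremals (distinguished by which cyclic class of $\mu$ meets their support) are cyclically permuted, and $\sigma_\mu$ is exactly the length because the cyclic classes are distinct. In the max case use Theorem~\ref{t:samecritical}(iii) for the analogous shift of $\supp(x^{[1]})$ through the cyclic classes of $\maxmu$. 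Distinctness of the extremals within one orbit follows since their (Boolean) supports differ; the fact that $A^{\sigma_\mu}$ fixes each of them is exactly the statement that they are eigenvectors of $A^{\sigma_\mu}$ with eigenvalue $\rho^{\sigma_\mu}$.

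Finally, part~(iv): by part~(i) every normalized extremal of $\core(A)$ is an extremal of some $V(A^{\sigma_\rho},\rho^{\sigma_\rho})$, hence by the Frobenius--Victory description applied to $(\amrho)^{\sigma_\rho}$ it is one of the extremals enumerated in~(ii); and these are pairwise non-proportional because their supports (or, in the max case, their Boolean supports, via Proposition~\ref{p:xmuvis}(ii)) are distinct across distinct derived classes/components, and within a single ancestor they are distinct by part~(iii). Counting: each spectral class $\mu$ (resp.\ critical component $\maxmu$) of $A$ contributes exactly $\sigma_\mu$ (resp.\ $\sigma_{\maxmu}$) normalized extremals, giving the stated totals. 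The main obstacle I anticipate is the bookkeeping in part~(iii)---verifying that the orbit length is exactly $\sigma_\mu$ and not a proper divisor, which requires that the $\sigma_\mu$ cyclic classes of $\mu$ are genuinely distinct (so the supports/Boolean supports are genuinely different at each step), and that no two extremals from \emph{different} ancestor classes accidentally coincide or merge under the $A$-action; the latter is handled by noting the $A$-action preserves the partition of extremals by ancestor (since it preserves index sets of classes up to the cyclic shift within a class).
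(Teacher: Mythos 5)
Your proposal is correct and follows essentially the same route as the paper: part~(i) from Main Theorem~\ref{t:core} and Proposition~\ref{p:extremals}, part~(ii) by reducing to $(\amrho)^{\sigma_\rho}$ via Theorem~\ref{t:reduction} and reading off the Frobenius--Victory extremals of the derived classes/components, part~(iii) by combining Proposition~\ref{p:permute} with the cyclic-class shift from Lemma~\ref{l:sameperron}(iii) resp.\ Theorem~\ref{t:samecritical}(iii), and part~(iv) by the converse direction of Theorems~\ref{t:samespectrum}(i) and~\ref{t:samecritical}(i) followed by counting. The only difference is cosmetic: you make explicit the intermediate equality $V(A^{\sigma_\Lambda},\rho^{\sigma_\Lambda})=V(A^{\sigma_\rho},\rho^{\sigma_\rho})$ in part~(i), which the paper leaves implicit.
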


\begin{proof}
(i) follows from Main Theorem~\ref{t:core} and
Proposition~\ref{p:extremals}.

For the proof of (ii) and (iii) we can fix
$\rho=\rho_{\mu}\in\Lambda(A)$, assume $A=\amrho$ (using
Theorem~\ref{t:reduction}) and $\sigma:=\sigma_{\rho}$. In max
algebra, we also assume that $A$ is strictly visualized.

(ii) {\bf In nonnegative algebra,} observe that by
Theorem~\ref{t:samespectrum}, each spectral class $\mu$ of $A$ gives
rise to $\sigma_{\mu}$ spectral classes in $A^{\times\sigma}$, whose
node sets are cyclic classes of $\mu$ (note that $\sigma_{\mu}$
divides $\sigma$). According to Frobenius-Victory
Theorem~\ref{t:FVnonneg}, these classes give rise to normalized
extremals of $\vnon(A^{\times\sigma},1)$, and the conditions on
support follow from Theorem~\ref{t:FVnonneg} and
Lemma~\ref{l:sameaccess}.

(iii): 
Let $x$ be an extremal described above. Then $\supp(x)\cap N_{\mu}$ is a cyclic class of $\mu$ and
$\supp(Ax)\cap N_{\mu}$ is the previous cyclic class of $\mu$, by Lemma~\ref{l:sameperron} part (iii).
It can be checked that all indices in $\supp(Ax)$ also have access to $\mu$.
By Proposition~\ref{p:permute}, $Ax$ is an extremal of $\corenon(A)$, and
hence an extremal of $\vnon(A^{\times\sigma},1)$. Theorem~\ref{t:FVnonneg} 
identifies $Ax$ with the extremal associated with the previous
cyclic class of $\mu$.

Vectors $x$, $Ax,\ldots,A^{\times\sigma_{\mu}-1}x$ are distinct since the intersections of their
supports with $N_{\mu}$ are disjoint, so they are exactly the set of extremals associated with $\mu$. Note that $A^{\times\sigma_{\mu}}x=x$, as $\supp(A^{\times\sigma_{\mu}}x)\cap N_{\mu}=\supp(x)\cap N_{\mu}$, and
both vectors are extremals of $\vnon(A^{\times\sigma},1)$.

(ii): {\bf In max algebra,} observe that by
Theorem~\ref{t:samecritical}(i) each component $\maxmu$ of
$\crit(A)$ gives rise to $\sigma_{\maxmu}$ components of
$\crit(A^{\otimes\sigma})$, whose node sets are the cyclic classes
of $\maxmu$ (note that $\sigma_{\maxmu}$ divides $\sigma$). These
components correspond to $\sigma_{\maxmu}$ columns of
$(A^{\otimes\sigma})^*$ with indices in different cyclic classes of
$\maxmu$, which are by Theorem~\ref{t:samecritical}(i) the node sets
of components of $\crit(A^{\otimes\sigma})$. By
Theorem~\ref{t:FVmaxalg} these columns of $(A^{\otimes\sigma})^*$
are extremals of $\vmax(A^{\otimes\sigma},1)$, and
Proposition~\ref{p:xmuvis}(ii) implies that they are normalized.

(iii): 
Let $x$ be an extremal described above. By
Proposition~\ref{p:xmuvis} and Theorem~\ref{t:samecritical}(i)
$\supp(x^{[1]})$ is a cyclic class of $\maxmu$, and by
Theorem~\ref{t:samecritical}(iii) $\supp((A\otimes x)^{[1]})$ is the
previous cyclic class of $\maxmu$. By Proposition~\ref{p:permute},
$A\otimes x$ is an extremal of $\coremax(A)$, and hence an extremal
of $\vmax(A^{\otimes\sigma},1)$. Proposition~\ref{p:xmuvis} 
identifies $A\otimes x$ with the extremal associated with the
previous cyclic class of $\maxmu$.

Vectors $x$, $A\otimes x,\ldots,A^{\otimes\sigma_{\maxmu}-1}x$ are distinct since
their booleanizations $x^{[1]}$, $(A\otimes x)^{[1]},\ldots,(A^{\otimes\sigma_{\maxmu}-1}\otimes x)^{[1]}$ are distinct,
 so they are exactly the set of extremals associated with $\maxmu$. Note that $A^{\otimes\sigma_{\maxmu}}\otimes x=x$, as
$(A^{\otimes\sigma_{\maxmu}}\otimes x)^{[1]}=x^{[1]}$
and both vectors are extremals of $\vmax(A^{\otimes\sigma},1)$.

(iv): {\bf In both algebras}, the converse part of
Theorem~\ref{t:samespectrum} (i) shows that there are no spectral
classes of $A^{\sigma}$ other than the ones derived from the
spectral classes of $A$. {\bf In nonnegative algebra,} this shows
that there are no extremals other than described in (ii). {\bf In
max algebra}, on top of that, the converse part of
Theorem~\ref{t:samecritical} (i) shows that there are no components
of $\crit(\amrho^{\otimes\sigma})$ other than the ones derived from
the components $\crit(\amrho)$, for $\rho\in\Lmax(A)$, hence there
are no extremals other than described in (ii). {\bf In both
algebras}, it remains to count the extremals described in (ii).
\end{proof}

\section{Sequence of eigencones}

\label{s:eigencones}

The main aim of this section is to investigate the periodicity of eigencones and to prove
Main Theorem~\ref{t:periodicity}. Unlike in Section~\ref{s:core}, the proof of periodicity will be
different for the cases of max algebra and nonnegative algebra. The
periods of eigencone sequences in max algebra and in nonnegative
linear algebra are also in general different, for the same
nonnegative matrix (see Section~\ref{s:examples} for an example). To
this end, recall the definitions of $\sigma_{\rho}$ and
$\sigma_{\Lambda}$ given in Section~\ref{s:key}, which will be used
below.

\subsection{Periodicity of the sequence}

\label{ss:period}


We first observe that in both algebras
\begin{equation}
\label{e:inclusion-eig}
\begin{split}
k\;\makebox{divides}\; l &\Rightarrow V(A^k,\rho^k)\subseteq V(A^l,\rho^l)\quad\forall\rho\in\Lambda(A),\\
k\;\makebox{divides}\; l &\Rightarrow V^{\Sigma}(A^k)\subseteq V^{\Sigma}(A^l).
\end{split}
\end{equation}

We now prove that the sequence of eigencones is periodic.

\begin{theorem}
\label{t:girls} Let $A\in\Rpnn$ and $\rho\in\Lambda(A)$.
\begin{itemize}
\item[{\rm (i)}]  $V(A^k,\rho^k)=V(A^{k+\sigma_{\rho}},\rho^{k+\sigma_{\rho}})$, and
$V(A^k,\rho^k)\subseteq V(A^{\sigma_{\rho}},\rho^{\sigma_{\rho}})$ for all $k\geq 1$.
\item[{\rm (ii)}] $V^{\Sigma}(A^k)=V^{\Sigma}(A^{k+\sigma_{\Lambda}})$ and $V^{\Sigma}(A^k)\subseteq V^{\Sigma}(A^{\sigma_{\Lambda}})$
for all $k\geq 1$.
\end{itemize}
\end{theorem}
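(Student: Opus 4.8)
The plan is to deduce both parts of Theorem~\ref{t:girls} from the following lemma, stated for both algebras at once (the eigencone counterpart of Corollary~\ref{c:div-boolean}): \emph{for $\rho\in\Lambda(A)$, if $\gcd(k,\sigma_\rho)$ divides $\gcd(l,\sigma_\rho)$ then $V(A^k,\rho^k)\subseteq V(A^l,\rho^l)$.} Granting it, part~(i) is immediate: since $\gcd(k,\sigma_\rho)=\gcd(k+\sigma_\rho,\sigma_\rho)$, applying the lemma with $(k,l)$ and with $(k+\sigma_\rho,k)$ gives $V(A^k,\rho^k)=V(A^{k+\sigma_\rho},\rho^{k+\sigma_\rho})$, and since $\gcd(k,\sigma_\rho)$ divides $\sigma_\rho$, the lemma with $l=\sigma_\rho$ gives $V(A^k,\rho^k)\subseteq V(A^{\sigma_\rho},\rho^{\sigma_\rho})$. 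For part~(ii), first note $V^{\Sigma}(A^k)=\sum_{\rho\in\Lambda(A)}V(A^k,\rho^k)$ by Corollary~\ref{c:samespectrum} (the map $\rho\mapsto\rho^k$ being injective on $\Lambda(A)$ as $\rho>0$). By part~(i) each summand has period $\sigma_\rho$, hence also the multiple $\sigma_\Lambda$ as a period, so summing over $\rho$ yields $V^{\Sigma}(A^k)=V^{\Sigma}(A^{k+\sigma_\Lambda})$; likewise $V(A^k,\rho^k)\subseteq V(A^{\sigma_\rho},\rho^{\sigma_\rho})\subseteq V(A^{\sigma_\Lambda},\rho^{\sigma_\Lambda})$, the last inclusion by~\eqref{e:inclusion-eig} since $\sigma_\rho$ divides $\sigma_\Lambda$, and summing over $\rho$ gives $V^{\Sigma}(A^k)\subseteq V^{\Sigma}(A^{\sigma_\Lambda})$.

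So everything reduces to the lemma. By Theorem~\ref{t:reduction}(iii) one has $V(A^m,\rho^m)=V((A_\rho)^m,1)$ for all $m\geq 1$, and the reduction $A\mapsto A_\rho$ takes the spectral classes (in max algebra, the critical components) associated with $\rho$, together with their cyclicities and hence with $\sigma_\rho$, onto those of $A_\rho$ associated with the eigenvalue $1$; so we may assume $\rho=1=\rho(A)$, put $B=A_\rho$, $\sigma=\sigma_\rho$, and in max algebra take $B$ strictly visualized. Next apply the Frobenius--Victory description of $V(B^m,1)$: in nonnegative algebra (Theorem~\ref{t:FVnonneg}) a generator $x^{(\mu')}$ for each spectral class $\mu'$ of $B^m$ with Perron root $1$, whose support is the set of indices accessing $\mu'$ in $\digr(B^m)$; in max algebra (Theorem~\ref{t:FVmaxalg}, Proposition~\ref{p:xmuvis}) the columns $(B^m)^*_{\cdot i}$ for $i$ a critical node of $B^m$. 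By Theorems~\ref{t:samespectrum} and~\ref{t:samecritical} (with Lemma~\ref{l:CAk}), these spectral classes / critical components of $B^m$ are exactly those derived from the spectral classes / critical components $\mu$ of $B$: an ancestor $\mu$ of cyclicity $\sigma_\mu$ contributes $\gcd(m,\sigma_\mu)$ of them, and by Theorem~\ref{t:brualdi} the node set of each is a union of cyclic classes of $\mu$, the resulting partition of the $\sigma_\mu$ cyclic classes being the partition into cosets of the subgroup $\langle\gcd(m,\sigma_\mu)\rangle$ of $\ZZ/\sigma_\mu\ZZ$, which depends on $m$ only through $\gcd(m,\sigma_\mu)$.

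Now suppose $\gcd(k,\sigma)$ divides $\gcd(l,\sigma)$. Since $\sigma_\mu$ divides $\sigma$ for every relevant ancestor $\mu$, this forces $\gcd(k,\sigma_\mu)\mid\gcd(l,\sigma_\mu)$, so the cyclic-class partition for $B^l$ refines that for $B^k$: every derived object of $B^k$ is a union of derived objects of $B^l$. It remains to see that each generator of $V(B^k,1)$ lies in $V(B^l,1)$, i.e., that the generator attached to a derived object $\mu'$ of $B^k$ is the appropriate combination of the generators attached to the derived objects of $B^l$ it contains. In nonnegative algebra this is checked at the level of supports via Theorem~\ref{t:FVnonneg} and Lemma~\ref{l:sameaccess}, using that inside a nontrivial class a path length may be shifted by any multiple of the cyclicity; in max algebra it is the statement that $(B^k)^*_{\cdot i}$ is a max-combination of the columns $(B^l)^*_{\cdot j}$, which follows from the path interpretation of the Kleene star together with the coset description of the cyclic classes.

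I expect this last verification --- that the individual generators, and not merely their count and combinatorial labels, are reproduced on passing from $B^k$ to $B^l$ --- to be the main obstacle; it is precisely the content developed in detail in the forthcoming Theorems~\ref{t:girls-major} and~\ref{t:girls-minor}, of which Theorem~\ref{t:girls} (and, via it, the final equality in Main Theorem~\ref{t:core}) is the corollary announced above.
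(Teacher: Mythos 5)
Your plan — prove the stronger lemma (that $\gcd(k,\sigma_\rho)\mid\gcd(l,\sigma_\rho)$ implies $V(A^k,\rho^k)\subseteq V(A^l,\rho^l)$), then deduce Theorem~\ref{t:girls} — is logically sound \emph{if} the lemma can be proved independently, and your deduction of (i) and (ii) from the lemma is correct. The problem is that you have not proved the lemma, and the gap you flag at the end is precisely the substance of Theorem~\ref{t:girls}, not a deferred detail. Concretely: knowing that the spectral classes (resp.\ critical components) of $B^k$ and $B^l$ derive from common ancestors, and knowing that each derived object of $B^k$ is a union of derived objects of $B^l$, tells you that the two families of Frobenius--Victory generators have matching \emph{supports}; it does not tell you that a generator $x$ of $V(B^k,1)$ actually satisfies $B^l x = x$. ``Checked at the level of supports'' is not an argument: Theorem~\ref{t:FVnonneg}(i) says that a vector \emph{already known to lie in} $V(B^l,1)$ with the right support is a multiple of the generator, which is exactly what you cannot assume. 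Likewise, in max algebra, that $(B^k)^*_{\cdot i}$ is a max-combination of the $(B^l)^*_{\cdot j}$ is not a routine consequence of the path interpretation and cyclic classes; the ultimate periodicity of the critical columns of $B^{\otimes m}$ (Nachtigall's Theorem~\ref{t:nacht}) is the nontrivial input that makes such an identity possible.

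You also have the paper's dependency structure inverted: Theorems~\ref{t:girls-major} and~\ref{t:girls-minor} are \emph{corollaries} of Theorem~\ref{t:girls} (via Lemma~\ref{l:gcd-if}), not the other way around; only their converse directions (v)$\Rightarrow$(i) and (iii)$\Rightarrow$(i) are established by an independent support argument. The paper's actual proof of Theorem~\ref{t:girls}(i) is quite different from what you sketch. In max algebra it goes through Theorem~\ref{t:nacht}, Corollary~\ref{TcaTcrit} and Theorem~\ref{t:BoolCycl} to show $a_{ii}^{\otimes(t\sigma)}=1$ for critical $i$ and large $t$, then bounds $(A^{\otimes(l+t\sigma)})^*_{\cdot i}$ from above and below to obtain equality $(A^{\otimes(l+t\sigma)})^*_{\cdot i}=(A^{\otimes l})^*_{\cdot i}$ of critical Kleene-star columns. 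In nonnegative algebra it first proves the special case $V(A^{\times\sigma},1)=V(A^{\times t\sigma},1)$ and the inclusion $V(A^{\times l},1)\subseteq V(A^{\times(l+t\sigma)},1)$ using divisibility and the fact that an eigenvector of both $A^{\times l}$ and $A^{\times t\sigma}$ is an eigenvector of $A^{\times(l+t\sigma)}$, and only \emph{then} exploits the disjoint-support structure of the Frobenius--Victory generators: an \emph{a priori} inclusion of cones with generators of matching supports forces the generators to coincide. That is the step your sketch omits, and it cannot be replaced by support-matching alone.
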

\begin{proof}
We will give two separate proofs of part (i), for the case of max
algebra and the case of nonnegative algebra. Part (ii) follows from
part (i).

In both algebras, we can assume without loss of generality that
$\rho=1$, and using Theorem~\ref{t:reduction}, that this is the
greatest eigenvalue of $A$.

{\bf In max algebra}, by Theorem~\ref{t:nacht}, columns of
$A^{\otimes r}$ with indices in $N_c(A)$ are periodic for
$r\geq\Tc(A)$. Recall that by Corollary~\ref{TcaTcrit}, $\Tc(A)$ is
not less than $T(\crit(A))$, which is the greatest periodicity
threshold of the strongly connected components of $\crit(A)$. By
Theorem~\ref{t:BoolCycl} part (ii), $(\crit(A))^{t\sigma}$ consists
of complete graphs for $t\sigma\geq T(\crit(A))$, in particular, it
contains loops $(i,i)$ for all $i\in\critindices(A)$. Hence
$$
a_{ii}^{\otimes(t\sigma)}=1\quad\forall i\in\critindices(A),\ t\geq\lceil\frac{\Tc(A)}{\sigma}\rceil,
$$
and
$$
a_{ki}^{\otimes (l+t\sigma)}\geq a^{\otimes l}_{ki} a_{ii}^{\otimes (t\sigma)}=a_{ki}^{\otimes l}
\quad\forall i\in\critindices(A),\ \forall k,l,\  \forall t\geq\lceil\frac{\Tc(A)}{\sigma}\rceil,
$$
or, in terms of columns of matrix powers,
$$
A_{\cdot i}^{\otimes (l+t\sigma)}\geq A_{\cdot i}^{\otimes l}\quad
\forall i\in\critindices(A),\ \forall l,\ \forall t\geq\lceil\frac{\Tc(A)}{\sigma}\rceil.
$$
Multiplying this inequality repeatedly by $A^{\otimes l}$ we obtain
$A_{\cdot i}^{\otimes (kl+t\sigma)}\geq A_{\cdot i}^{\otimes(kl)}$
for all $k\geq 1$, and $A_{\cdot i}^{\otimes (k(l+t\sigma))}\geq
A_{\cdot i}^{\otimes(kl)}$ for all $k\geq 1$.
Hence we obtain
\begin{equation}
\label{e:*ineq}
(A^{\otimes (l+t\sigma)})^*_{\cdot i}\geq (A^{\otimes l})^*_{\cdot i}\quad\forall i\in\critindices(A),\ \forall l,\
\forall t\geq\lceil\frac{\Tc(A)}{\sigma}\rceil.
\end{equation}
On the other hand, using the ultimate periodicity of critical columns we have
\begin{equation*}
(A^{\otimes (l+t\sigma)})^*_{\cdot i}=\bigoplus\{A_{\cdot i}^{\otimes s}\colon  s\equiv kl\text{(mod} \sigma),\ k\geq 1,\ s\geq\Tc(A) \}
\end{equation*}
for all $l$ and all $t\sigma\geq\Tc(A)$, while generally
\begin{equation*}
\label{e:sweepineq}
(A^{\otimes l})^*_{\cdot i}\geq \bigoplus\{A_{\cdot i}^{\otimes s}\colon  s\equiv kl\text{(mod} \sigma),\ k\geq 1,\ s\geq\Tc(A) \}\ \forall l,
\end{equation*}
implying the reverse inequality with respect to~\eqref{e:*ineq}. It
follows that
\begin{equation}
\label{e:*eq}
(A^{\otimes (l+t\sigma)})^*_{\cdot i}= (A^{\otimes l})^*_{\cdot i}\quad\forall i\in\critindices(A),\ \forall l,\ \forall
t\geq\lceil\frac{\Tc(A)}{\sigma}\rceil,
\end{equation}
therefore $(A^{\otimes (l+\sigma)})^*_{\cdot i}=(A^{\otimes
(l+t\sigma+\sigma)})^*_{\cdot i}=(A^{\otimes (l+t\sigma)})^*_{\cdot
i} =(A^{\otimes l})^*_{\cdot i}$ for all critical indices $i$ and
all $l$. Since $V(A^{\otimes l},1)$ is generated by the critical
columns of $(A^{\otimes l})^*$, and the critical indices of
$A^{\otimes l}$ are $N_c(A)$ by Theorem~\ref{t:samecritical}(ii),
the periodicity $\vmax(A^{\otimes l},\rho^l)=\vmax(A^{\otimes
(l+\sigma)},\rho^{l+\sigma})$ follows. Using this
and~\eqref{e:inclusion-eig} we obtain $\vmax(A^{\otimes
l},\rho^l)\subseteq\vmax(A^{\otimes(l\sigma)},\rho^{l\sigma})=\vmax(A^{\otimes\sigma},\rho^{\sigma})$
for each $l$ and $\rho\in\Lmax(A)$.

{\bf In nonnegative algebra,} also assume that all final classes
(and hence only them) have Perron root $\rho=1$. Final classes of
$A^{\times l}$ are derived from the final classes of $A$; they (and
no other classes) have Perron root $\rho^l$. By
Theorem~\ref{t:samespectrum}(i) and~Corollary~\ref{c:div-boolean},
for any $t\geq 0$, the spectral classes of $A^{\times l}$ and
$A^{\times (l+t\sigma)}$ with Perron root $1$ have the same sets of
nodes, which we denote by $N_1,\ldots,N_m$ (assuming that their
number is $m\geq 1$).

By the Frobenius-Victory Theorem~\ref{t:FVnonneg}, the cone
$\vnon(A^{\times l},1)$ is generated by $m$ extremals
$x^{(1)},\ldots,x^{(m)}$ with the support condition of
Theorem~\ref{t:FVnonneg}(i) from which we infer that the subvectors
$x^{(\mu)}_{\mu}$ (i.e., $x^{(\mu)}_{N_{\mu}}$) are positive, while
$x^{(\mu)}_{\nu}$ (i.e., $x^{(\mu)}_{N_{\nu}}$) are zero for all
$\mu\neq \nu$ from $1$ to $m$, since the different spectral classes
by~\eqref{e:speclass} do not access each other, in the nonnegative
linear algebra. Analogously the cone
$\vnon(A^{\times(l+t\sigma)},1)$ is generated by $m$ eigenvectors
$y^{(1)},\ldots,y^{(m)}$ such that the subvectors $y^{(\mu)}_{\mu}$
are positive, while $y^{(\mu)}_{\nu}=0$ for all $\mu\neq \nu$ from
$1$ to $m$.

Assume first that $l=\sigma$. As $\vnon(A^{\times\sigma},1)\subseteq \vnon(A^{\times (t\sigma)},1)$, each $x^{(\mu)}$ is a nonnegative
 linear combination of $y^{(1)},\ldots,y^{(m)}$, and this implies $x^{(\mu)}=y^{(\mu)}$ for all $\mu=1,\ldots,m$.
Hence $\vnon(A^{\times (t\sigma)},1)=\vnon(A^{\times\sigma},1)$ for all $t\geq 0$.

We also obtain
$\vnon(A^{\times l},1)\subseteq \vnon(A^{\times (\sigma l)},1)=\vnon(A^{\times\sigma},1)$ for all $l$.
Thus $\vnon(A^{\times l},1)\subseteq\vnon(A^{\times (t\sigma)},1)$, and therefore
$\vnon(A^{\times l},1)\subseteq \vnon(A^{\times (l+t\sigma)},1)$. Now
if $\vnon(A^{\times l},1)$, resp. $\vnon(A^{\times (l+t\sigma)},1)$ are generated by $x^{(1)},\ldots,x^{(m)}$, resp.
$y^{(1)},\ldots, y^{(m)}$
described above and each  $x^{(\mu)}$ is a nonnegative
 linear combination of $y^{(1)},\ldots,y^{(m)}$,  this again implies $x^{(\mu)}=y^{(\mu)}$ for all $\mu=1,\ldots,m$,
and $\vnon(A^{\times (l+t\sigma)},1)=\vnon(A^{\times l},1)$ for all $t\geq 0$ and all $l$.

Using this and~\eqref{e:inclusion-eig} we obtain $\vnon(A^{\times l},\rho^l)\subseteq\vnon(A^{\times(l\sigma)},\rho^{l\sigma})=\vnon(A^{\times\sigma},\rho^{\sigma})$ for each $l$ and $\rho\in\Lnon(A)$.
\end{proof}


\if{
for the sum of all eigencones $V^{\Sigma}(A^k)=\sum_{\rho\in\Lambda(A)} V(A^k,\rho^k)$.
Summarizing
Theorems~\ref{t:girls-max} and~\ref{t:girls-nonneg} and using
$\sigma_{\rho}$ and $\sigma_{\Lambda}$
 we obtain the following formulation
(now uniting both algebras).

\begin{theorem}
\label{t:girls}
Let $A\in\Rpnn$. For all $l\geq 1$ and $\rho\in\Lambda(A)$,
\begin{itemize}
\item[(i)] $V(A^l,\rho^l)=V(A^{l+\sigma_{\rho}},\rho^{l+\sigma_{\rho}})$ and
$V(A^l,\rho^l)\subseteq V(A^{\sigma_{\rho}},\rho^{\sigma_{\rho}})$,
\item[(ii)] $V^{\Sigma}(A^l)=V^{\Sigma}(A^{l+\sigma_{\Lambda}})$ and
$V^{\Sigma}(A^l)\subseteq V^{\Sigma}(A^{\sigma_{\Lambda}})$.
\end{itemize}
\end{theorem}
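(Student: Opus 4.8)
I would prove (i) first, in the two algebras separately, and deduce (ii) from it. By Theorem~\ref{t:reduction}(iii) one has $V(A^k,\rho^k)=V((\amrho)^k,1)$ for all $k$, and $\sigma_\rho$ is intrinsic to $\amrho$ (the cyclicity of $\crit(\amrho)$ in max algebra, the lcm of the cyclicities of the spectral classes of $\amrho$ with Perron root $1$ in nonnegative algebra); so I may assume $\rho=1=\rho(A)$ and write $\sigma:=\sigma_\rho$. Once periodicity $V(A^k,1)=V(A^{k+\sigma},1)$ is in hand, the inclusions come for free: $V(A^k,1)\subseteq V(A^{k\sigma},1)$ by~\eqref{e:inclusion-eig}, and $V(A^{k\sigma},1)=V(A^\sigma,1)$ by periodicity (the two exponents being congruent modulo $\sigma$). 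Part (ii) is then mechanical: $V^\Sigma(A^k)=\sum_{\rho\in\Lambda(A)}V(A^k,\rho^k)$ by Corollary~\ref{c:samespectrum}, each $\sigma_\rho$ divides $\sigma_\Lambda$, so (i) gives $V(A^k,\rho^k)=V(A^{k+\sigma_\Lambda},\rho^{k+\sigma_\Lambda})$ and $V(A^k,\rho^k)\subseteq V(A^{\sigma_\Lambda},\rho^{\sigma_\Lambda})$ termwise, and one sums over $\rho$.

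\textbf{Max algebra.} After strictly visualizing $A$ (Theorem~\ref{t:strictvis}), $\vmax(A^{\otimes l},1)$ is generated by the columns $(A^{\otimes l})^*_{\cdot i}$ with $i\in N_c(A^{\otimes l})$ (Frobenius--Victory, Theorem~\ref{t:FVmaxalg}), and $N_c(A^{\otimes l})=N_c(A)$ for all $l$ (Theorem~\ref{t:samecritical}(ii)); so it suffices to prove $(A^{\otimes(l+\sigma)})^*_{\cdot i}=(A^{\otimes l})^*_{\cdot i}$ for every $l$ and every critical $i$. Since $\Tc(A)\ge T(\crit(A))$ (Corollary~\ref{TcaTcrit}) and, applying Theorem~\ref{t:BoolCycl}(ii) to the components of $\crit(A)$ (whose cyclicities all divide $\sigma$), $\crit(A)^{t\sigma}$ consists of complete graphs once $t\sigma\ge T(\crit(A))$, every critical node carries a loop there, i.e.\ $a_{ii}^{\otimes t\sigma}=1$ for critical $i$ and $t\sigma\ge\Tc(A)$. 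Concatenating such a loop with length-$l$ paths into $i$ gives $A^{\otimes(l+t\sigma)}_{\cdot i}\ge A^{\otimes l}_{\cdot i}$, hence, iterating the factor $A^{\otimes l}$ and passing to the Kleene star, $(A^{\otimes(l+t\sigma)})^*_{\cdot i}\ge(A^{\otimes l})^*_{\cdot i}$. The reverse inequality I would extract from Nachtigall's ultimate periodicity of critical columns (Theorem~\ref{t:nacht}): for $s\ge\Tc(A)$ the column $A^{\otimes s}_{\cdot i}$ depends only on $s\bmod\sigma$, while $(A^{\otimes l})^*_{\cdot i}=\bigoplus_{k\ge 0}A^{\otimes kl}_{\cdot i}$ already runs over arbitrarily large exponents realizing every residue that can occur, so it dominates all the columns summed in $(A^{\otimes(l+t\sigma)})^*_{\cdot i}$. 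This yields $(A^{\otimes(l+t\sigma)})^*_{\cdot i}=(A^{\otimes l})^*_{\cdot i}$ for $t\sigma\ge\Tc(A)$; writing $f(l):=(A^{\otimes l})^*_{\cdot i}$, the one-period identity then follows from $f(l+\sigma)=f\big((l+\sigma)+t\sigma\big)=f\big(l+(t+1)\sigma\big)=f(l)$.

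\textbf{Nonnegative algebra.} Here I would reduce once more so that the final classes of $A$ are exactly its classes with Perron root $1$ (the case of $\amrho$ restricted to $M_\rho$, the only part carrying eigenvectors for eigenvalue $1$), so $\sigma$ is the lcm of the cyclicities of those final classes. For each $l$, the spectral classes of $A^{\times l}$ with Perron root $1$ are the classes derived from the final classes of $A$ (Theorem~\ref{t:samespectrum}(i)); as $\sigma_\mu\mid\sigma$ for each final class $\mu$, their node sets are governed by $\gcd(l,\sigma_\mu)=\gcd(l+t\sigma,\sigma_\mu)$ and so coincide for the exponents $l$ and $l+t\sigma$ (Theorem~\ref{t:samespectrum}(ii), Corollary~\ref{c:div-boolean}); call them $N_1,\dots,N_m$. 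By Frobenius--Victory (Theorem~\ref{t:FVnonneg}), $\vnon(A^{\times l},1)$ and $\vnon(A^{\times(l+t\sigma)},1)$ are generated by $x^{(1)},\dots,x^{(m)}$ resp.\ $y^{(1)},\dots,y^{(m)}$, with $x^{(\mu)}$ and $y^{(\mu)}$ strictly positive on $N_\mu$ and vanishing on $N_\nu$ for $\nu\ne\mu$ (distinct spectral classes associated with the same eigenvalue do not access one another, by~\eqref{e:speclass}). Taking first $l$ a multiple of $\sigma$:~\eqref{e:inclusion-eig} gives $\vnon(A^{\times\sigma},1)\subseteq\vnon(A^{\times t\sigma},1)$, so each $x^{(\mu)}$ is a nonnegative combination of the $y^{(\nu)}$; restricting to the blocks $N_\nu$ forces $x^{(\mu)}$ to be a positive multiple of $y^{(\mu)}$, whence $\vnon(A^{\times t\sigma},1)=\vnon(A^{\times\sigma},1)$ for all $t\ge1$. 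For general $l$: any $x$ with $A^{\times l}x=x$ lies in $\vnon(A^{\times l},1)\subseteq\vnon(A^{\times l\sigma},1)=\vnon(A^{\times\sigma},1)=\vnon(A^{\times t\sigma},1)$, so $A^{\times t\sigma}x=x$ and hence $A^{\times(l+t\sigma)}x=A^{\times l}(A^{\times t\sigma}x)=x$ --- the eigenvalue being $1$ is essential here. Thus $\vnon(A^{\times l},1)\subseteq\vnon(A^{\times(l+t\sigma)},1)$, and the block-support argument again promotes this to equality; $t=1$ finishes the proof.

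\textbf{Main obstacle.} The delicate point is the max-algebraic step: Nachtigall yields only ultimate periodicity past the threshold $\Tc(A)$, whereas the eigencone sequence must be periodic with period exactly $\sigma$ starting from $k=1$. What bridges the gap is that the Kleene star of $A^{\otimes l}$, read off a critical column, absorbs the entire ultimately-periodic tail of critical powers, and the monotonicity $A^{\otimes(l+t\sigma)}_{\cdot i}\ge A^{\otimes l}_{\cdot i}$ supplied by the loops in $\crit(A)^{t\sigma}$ is precisely what makes the two one-sided estimates for $(A^{\otimes l})^*_{\cdot i}$ meet. In the nonnegative case the one thing to watch is that the node sets of the relevant spectral classes are genuinely unchanged on replacing $l$ by $l+t\sigma$, which is where $\sigma_\mu\mid\sigma$ and Corollary~\ref{c:div-boolean} enter.
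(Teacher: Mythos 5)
Your proposal is correct and follows essentially the same route as the paper's proof: the same reduction to $\rho=1$ via Theorem~\ref{t:reduction}, the same split into two separate arguments, with the max-algebraic case built on Nachtigall's ultimate periodicity (Theorem~\ref{t:nacht}), Boolean cyclicity of $\crit(A)$ (Theorem~\ref{t:BoolCycl}(ii) plus Corollary~\ref{TcaTcrit}) and the two-sided Kleene-star estimate on critical columns, and the nonnegative case built on the stability of the derived spectral classes (Theorem~\ref{t:samespectrum}, Corollary~\ref{c:div-boolean}) and the block-support rigidity from Frobenius--Victory (Theorem~\ref{t:FVnonneg}). The deduction of the inclusions from periodicity via~\eqref{e:inclusion-eig} and of part (ii) by summing over $\rho$ also matches the paper.
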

}\fi

\subsection{Inclusion and divisibility}
\label{ss:incl}

We now show that the inclusion relations between the eigencones of
different powers of a matrix, in both algebras, strictly follow
divisibility of exponents of matrix powers with respect to
$\sigma_{\rho}$ and $\sigma_{\Lambda}$.
We start with a corollary of Theorem~\ref{t:girls}.

\begin{lemma}
\label{l:gcd-if}
Let $k,l\geq 1$ and $\rho\in\Lambda(A)$.
\begin{itemize}
\item[{\rm (i)}] $V(A^k,\rho^k)=V(A^{\text{gcd}(\sigma_{\rho},k)},\rho^{\text{gcd}(\sigma_{\rho},k)})$
and $V^{\Sigma}(A^k)=V^{\Sigma}(A^{\text{gcd}(\sigma_{\Lambda},k)})$.
\item[{\rm (ii)}] gcd$(k,\sigma_{\rho})=$ gcd$(l,\sigma_{\rho})$ implies $V(A^k,\rho^k)=V(A^l,\rho^l)$, and
gcd$(k,\sigma_{\Lambda})=$ gcd$(l,\sigma_{\Lambda})$ implies $V^{\Sigma}(A^k)=V^{\Sigma}(A^l)$.
\end{itemize}
\end{lemma}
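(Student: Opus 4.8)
The plan is to deduce both statements directly from Theorem~\ref{t:girls} together with the divisibility monotonicity~\eqref{e:inclusion-eig}, using one elementary fact about the cyclic subgroup generated by $k$ in $\mathbb{Z}/\sigma_{\rho}\mathbb{Z}$ (respectively $\mathbb{Z}/\sigma_{\Lambda}\mathbb{Z}$). The whole argument is short and essentially number-theoretic bookkeeping.

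First I would promote the periodicity statement of Theorem~\ref{t:girls}(i) from ``period $\sigma_{\rho}$'' to its useful reformulation: applying $V(A^k,\rho^k)=V(A^{k+\sigma_{\rho}},\rho^{k+\sigma_{\rho}})$ repeatedly gives $V(A^a,\rho^a)=V(A^b,\rho^b)$ whenever $a\equiv b\pmod{\sigma_{\rho}}$ with $a,b\geq 1$, i.e. the term $V(A^k,\rho^k)$ depends only on the residue of $k$ modulo $\sigma_{\rho}$. The analogous statement for $V^{\Sigma}(A^k)$ and $\sigma_{\Lambda}$ follows identically from Theorem~\ref{t:girls}(ii). (Recall also that $\rho^{k}\in\Lambda(A^k)$ by Corollary~\ref{c:samespectrum}, so all the cones written below are legitimate eigencones.)

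For part (i), set $d=\gcd(k,\sigma_{\rho})$. Since $d$ divides $k$, \eqref{e:inclusion-eig} yields $V(A^{d},\rho^{d})\subseteq V(A^{k},\rho^{k})$. For the reverse inclusion, note that $d$ lies in the subgroup of $\mathbb{Z}/\sigma_{\rho}\mathbb{Z}$ generated by $k$, so there is an integer $m\geq 1$ with $mk\equiv d\pmod{\sigma_{\rho}}$ (if a first choice of $m$ is nonpositive, add a multiple of $\sigma_{\rho}$ to it, which does not affect the congruence). Then $V(A^{k},\rho^{k})\subseteq V(A^{mk},\rho^{mk})$ by~\eqref{e:inclusion-eig} because $k$ divides $mk$, while $V(A^{mk},\rho^{mk})=V(A^{d},\rho^{d})$ by the periodicity recorded above. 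Combining the two inclusions gives $V(A^{k},\rho^{k})=V(A^{\gcd(\sigma_{\rho},k)},\rho^{\gcd(\sigma_{\rho},k)})$; the statement for $V^{\Sigma}(A^k)$ is obtained verbatim, replacing $\sigma_{\rho}$ by $\sigma_{\Lambda}$ and $V(\cdot,\cdot)$ by $V^{\Sigma}(\cdot)$ throughout, using Theorem~\ref{t:girls}(ii) in place of (i). Part (ii) is then immediate: if $\gcd(k,\sigma_{\rho})=\gcd(l,\sigma_{\rho})$ then part (i) gives $V(A^{k},\rho^{k})=V(A^{\gcd(k,\sigma_{\rho})},\rho^{\gcd(k,\sigma_{\rho})})=V(A^{\gcd(l,\sigma_{\rho})},\rho^{\gcd(l,\sigma_{\rho})})=V(A^{l},\rho^{l})$, and likewise for $V^{\Sigma}$ with $\sigma_{\Lambda}$.

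I do not expect any real obstacle here; the only points needing a line of care are the promotion of Theorem~\ref{t:girls} from a period statement to a ``depends only on the residue mod $\sigma_{\rho}$'' statement, and the choice of a \emph{positive} multiplier $m$ realising $mk\equiv d\pmod{\sigma_{\rho}}$. Everything else is a direct combination of Theorem~\ref{t:girls} with~\eqref{e:inclusion-eig}.
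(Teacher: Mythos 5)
Your proof is correct and follows the same overall strategy as the paper: establish $V(A^k,\rho^k)=V(A^{\gcd(k,\sigma_{\rho})},\rho^{\gcd(k,\sigma_{\rho})})$ by combining the periodicity from Theorem~\ref{t:girls} with the divisibility monotonicity~\eqref{e:inclusion-eig}, then sandwich via a positive multiple $mk\equiv d\pmod{\sigma_{\rho}}$. The one genuine (small) difference is how the positive multiplier is produced: the paper invokes Lemma~\ref{l:schur} (the Frobenius coin problem) to write $s+t\sigma=t_1k+t_2\sigma$ with $t_1\geq 1$, whereas you use the more elementary observation that $d=\gcd(k,\sigma_\rho)$ generates the same cyclic subgroup of $\mathbb{Z}/\sigma_\rho\mathbb{Z}$ as $k$, so some $m\geq 1$ gives $mk\equiv d$. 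Your variant is marginally lighter (it avoids the coin-problem lemma), while the paper's phrasing is consistent with its earlier use of Lemma~\ref{l:schur} for Boolean and max-algebraic cyclicity; both arguments are sound and of comparable length.
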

\begin{proof}
(i): Let $\sigma:=\sigma_{\rho}$, and $s:=$gcd$(k,\sigma)$. If
$s=\sigma$ then $k$ is a multiple of $\sigma$ and
$V(A^k,\rho^k)=V(A^s,\rho^s)$ by Theorems~\ref{t:girls}(i).
Otherwise, since $s$ divides $k$, we have $V(A^s,\rho^s)\subseteq
V(A^k,\rho^k)$. In view of the periodicity
(Theorem~\ref{t:girls}(i)), it suffices to find $t$ such that
$V(A^k,\rho^k)\subseteq V(A^{s+t\sigma},\rho^{s+t\sigma})$. For
this, observe that $s+t\sigma$ is a multiple of $s=$
gcd$(k,\sigma)$. By Lemma~\ref{l:schur} (the Frobenius coin
problem), for big enough $t$ it can be expressed as $t_1 k +
t_2\sigma$ where $t_1,t_2\geq 0$. Moreover $t_1\neq 0$, for
otherwise we have $s=\sigma$. Then we obtain
\begin{equation*}
\begin{split}
V(A^k,\rho^k)&\subseteq V(A^{t_1k},\rho^{t_1k})=V(A^{t_1k+t_2\sigma},\rho^{t_1 k+t_2\sigma})\\
&=V(A^{s+t\sigma},\rho^{s+t\sigma})=V(A^s,\rho^s),
\end{split}
\end{equation*}
and the first part of the claim follows. The second part is obtained
similarly, using Theorem~\ref{t:girls}(ii) instead of
Theorems~\ref{t:girls}(i).

(ii) follows from (i).
\end{proof}

\begin{theorem}
\label{t:girls-major} Let $A\in\Rpnn$ and $\sigma$ be either the
cyclicity of a spectral class of $A$ {\bf (nonnegative algebra)} or
the cyclicity of a critical component of $A$ {\bf (max algebra)}.The
following are equivalent for all positive $k,l$:
\begin{itemize}
\item[{\rm (i)}] gcd$(k,\sigma)$ divides gcd$(l,\sigma)$ for all cyclicities $\sigma$;
\item[{\rm (ii)}] gcd$(k,\sigma_{\rho})$ divides gcd$(l,\sigma_{\rho})$ for all $\rho\in\Lambda(A)$;
\item[{\rm (iii)}] gcd$(k,\sigma_{\Lambda})$ divides gcd$(l,\sigma_{\Lambda})$;
\item[{\rm (iv)}] $V(A^k,\rho^k)\subseteq V(A^l,\rho^l)$ for all $\rho\in\Lambda(A)$ and
\item[{\rm (v)}] $V^{\Sigma}(A^k)\subseteq V^{\Sigma}(A^l)$.
\end{itemize}
\end{theorem}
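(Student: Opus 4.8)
The plan is to establish the arithmetic equivalences (i)$\Leftrightarrow$(ii)$\Leftrightarrow$(iii) directly, and then to close the loop (ii)$\Rightarrow$(iv)$\Rightarrow$(v)$\Rightarrow$(iv)$\Rightarrow$(ii) among the geometric conditions. The arithmetic part is elementary: writing $v_p$ for the $p$-adic valuation, one has $\gcd(k,m)\mid\gcd(l,m)$ if and only if $v_p(m)\le v_p(l)$ for every prime $p$ with $v_p(k)>v_p(l)$. Since the $p$-adic valuation of a least common multiple is the maximum of the valuations, and both $\sigma_\rho$ (for a fixed $\rho$) and $\sigma_\Lambda$ are least common multiples of the cyclicities $\sigma$ figuring in (i), all three of (i), (ii), (iii) encode the same condition on $l$: namely $v_p(\sigma_\Lambda)\le v_p(l)$ whenever $v_p(k)>v_p(l)$.

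For (ii)$\Rightarrow$(iv), fix $\rho\in\Lambda(A)$, put $\sigma=\sigma_\rho$, $a=\gcd(k,\sigma)$ and $b=\gcd(l,\sigma)$. By Lemma~\ref{l:gcd-if}(i), $V(A^k,\rho^k)=V(A^a,\rho^a)$ and $V(A^l,\rho^l)=V(A^b,\rho^b)$; since $a\mid b$, the inclusion follows from~\eqref{e:inclusion-eig}. The implication (iv)$\Rightarrow$(v) is then immediate from~\eqref{e:samespectrum}, as $V^\Sigma(A^k)$ and $V^\Sigma(A^l)$ are the Minkowski sums over $\rho\in\Lambda(A)$ of $V(A^k,\rho^k)$, resp. $V(A^l,\rho^l)$, so one just sums the inclusions of (iv). For (v)$\Rightarrow$(iv) I would peel off a single eigenvalue by an asymptotic argument. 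Let $x\in V(A^k,\rho^k)$; then $x\in V^\Sigma(A^k)\subseteq V^\Sigma(A^l)$, so $x=\sum_{\rho'\in\Lambda(A)}y^{\rho'}$ with $y^{\rho'}\in V(A^l,(\rho')^l)$. Applying $A^{klt}$ and using $A^k x=\rho^k x$ and $A^l y^{\rho'}=(\rho')^l y^{\rho'}$ gives $x=\sum_{\rho'}(\rho'/\rho)^{klt}y^{\rho'}$ for every $t\ge 1$ (in the respective algebra). A nonzero $y^{\rho'}$ with $\rho'>\rho$ would make some coordinate of the right-hand side grow without bound as $t\to\infty$, which is impossible; and the contributions of the $y^{\rho'}$ with $\rho'<\rho$ tend to $0$. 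Since the left-hand side does not depend on $t$, we get $x=y^\rho\in V(A^l,\rho^l)$. As $V(A^k,\rho^k)$ is generated by its extremals, this proves $V(A^k,\rho^k)\subseteq V(A^l,\rho^l)$.

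The hard part is (iv)$\Rightarrow$(ii): an inclusion of eigencones must already force divisibility of the truncated exponents. Fix $\rho\in\Lambda(A)$; by Theorem~\ref{t:reduction}(iii) and Lemma~\ref{l:gcd-if}(i) we may replace $A$ by $B:=A_\rho$ (strictly visualized, in the max case) and $k,l$ by $a:=\gcd(k,\sigma_\rho)$ and $b:=\gcd(l,\sigma_\rho)$, both dividing $\sigma_\rho$; it then suffices to show that $V(B^a,1)\subseteq V(B^b,1)$ implies $a\mid b$. Here I would combine the Frobenius--Victory description of the principal eigencone with the Boolean component combinatorics. In nonnegative algebra, each spectral class $\hat\mu$ of $B$ with Perron root $1$ and cyclicity $\sigma_{\hat\mu}$ splits, by Theorem~\ref{t:samespectrum} and Theorem~\ref{t:brualdi}, into $\gcd(a,\sigma_{\hat\mu})$ derived spectral classes of $B^a$ whose node sets are the $a$-orbits of the cyclic classes of $\hat\mu$; by Theorem~\ref{t:FVnonneg} and Lemma~\ref{l:sameaccess}, the corresponding extremal of $V(B^a,1)$ meets $N_{\hat\mu}$ exactly in such an $a$-orbit, and distinct spectral classes sharing the Perron root $1$ do not access one another. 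The max-algebraic case is parallel, with spectral classes replaced by critical components (Lemma~\ref{l:CAk}, Theorem~\ref{t:samecritical}) and ordinary supports by booleanized supports (Proposition~\ref{p:xmuvis}(ii)).

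Writing an extremal of $V(B^a,1)$ as a (max-)nonnegative combination of extremals of $V(B^b,1)$ and restricting to the ancestor block $N_{\hat\mu}$ forces the $a$-orbit of cyclic classes of $\hat\mu$ to be a union of $b$-orbits; identifying the cyclic classes of $\hat\mu$ with $\ZZ/\sigma_{\hat\mu}\ZZ$ so that the $c$-orbits become the cosets of the subgroup $\gcd(c,\sigma_{\hat\mu})\ZZ/\sigma_{\hat\mu}\ZZ$, this forces $\gcd(a,\sigma_{\hat\mu})\mid\gcd(b,\sigma_{\hat\mu})$. Taking the least common multiple over all spectral classes (critical components) associated with $\rho$ yields $\gcd(a,\sigma_\rho)\mid\gcd(b,\sigma_\rho)$, hence $a\mid b$ since $a,b\mid\sigma_\rho$; this is (ii), and (i) follows by the arithmetic equivalence. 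The main obstacle is exactly this implication: one must carry the support and cyclic-class bookkeeping across matrix powers uniformly in both algebras, and correctly translate the eigencone inclusion into the coset combinatorics that delivers the divisibility.
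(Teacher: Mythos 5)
Your proof is correct, and the heart of it (translating an inclusion of eigencones into a covering of a cyclic class orbit by smaller orbits, via Frobenius--Victory and the Boolean component combinatorics of Theorem~\ref{t:brualdi} / Corollary~\ref{c:div-boolean}) is the same as the paper's. The organization differs in two useful ways. First, you establish the full arithmetic equivalence (i)$\Leftrightarrow$(ii)$\Leftrightarrow$(iii) by $p$-adic valuations, which the paper does not state explicitly (it only proves (i)$\Rightarrow$(ii)$\Rightarrow$(iii) and then closes the loop geometrically); your observation is a correct and clean addition, although it does not relieve you of having to prove the hard geometric implication. Second, you close the loop via (v)$\Rightarrow$(iv)$\Rightarrow$(ii) rather than the paper's direct (v)$\Rightarrow$(i). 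Your step (v)$\Rightarrow$(iv) uses an asymptotic growth-rate argument (iterate $A^{klt}$ and let $t\to\infty$) to filter out the $y^{\rho'}$ with $\rho'\neq\rho$; the paper instead peels those off implicitly by a support/access argument (restricting to the submatrix $A_{JJ}$ with $J$ the set of indices accessing the relevant spectral class, and then intersecting supports with $N_{\mu}$, resp.\ $N_{\tilde\mu}$). Both mechanisms achieve the same reduction to a single eigenvalue; yours is arguably cleaner as it separates ``isolate the eigenvalue'' (analytic) from ``compare cyclic classes within that eigenvalue'' (combinatorial). The asymptotic filtering you use is in fact the same device the paper employs in its proof of Proposition~\ref{p:extremals}, so it is well within the toolkit of the paper. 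Your final step (iv)$\Rightarrow$(ii) is then precisely the paper's combinatorial argument for (v)$\Rightarrow$(i) restricted to a fixed $\rho$, with the booleanization bookkeeping in the max case handled in the same way via strict visualization and Proposition~\ref{p:xmuvis}.
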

\begin{proof}
(i)$\Rightarrow$(ii)$\Rightarrow$(iii) follow from elementary number theory.
(ii)$\Rightarrow$ (iv) and (iii)$\Rightarrow$(v)
follow from~\eqref{e:inclusion-eig} and Lemma~\ref{l:gcd-if} part (i) (which is essentially based on Theorem~\ref{t:girls}).
(iv)$\Rightarrow$(v) is trivial.
It only remains to show that (v)$\Rightarrow$ (i).

(v)$\Rightarrow$ (i): {\bf In both algebras}, take an extremal $x\in
V(A^k,\rho^k)$. As $V^{\Sigma}(A^k)\subseteq V^{\Sigma}(A^l)$, this
vector can be represented as $x=\sum_i y^i$, where $y^i$ are
extremals of $V^{\Sigma}(A^l)$. Each $y^i$ is an extremal of
$V(A^l,\Tilde{\rho}^l)$ for some $\Tilde{\rho}\in\Lambda(A)$ (as we
will see, only the extremals with $\Tilde{\rho}=\rho$ are
important). By Frobenius-Victory Theorems~\ref{t:FVnonneg}
and~\ref{t:FVmaxalg} and Theorem~\ref{t:samespectrum}(i), there is a
unique spectral class $\mu$ of $A$ to which all indices in
$\supp(x)$ have access. Since $\supp(y^i)\subseteq\supp(x)$, we are
restricted to the submatrix $A_{JJ}$ where $J$ is the set of all
indices accessing $\mu$ in $A$. In other words, we can assume
without loss of generality that $\mu$ is the only final class in
$A$, hence $\rho$ is the greatest eigenvalue, and $\rho=1$. Note
that $\supp(x)\cap N_{\mu}\neq\emptyset$.

{\bf In nonnegative algebra,}
restricting the equality $x=\sum_i y^i$ to $N_{\mu}$ we obtain
\begin{equation}
\label{e:eqsupps1}
\supp(x_{\mu})=\bigcup_i \supp(y^i_{\mu}).
\end{equation}
If $\supp(y^i_{\mu})$ is non-empty, then $y^i$ is associated with
a spectral class of $A^{\times l}$ whose nodes are in $N_{\mu}$.
Theorem~\ref{t:FVnonneg}(i) implies that $\supp(y^i_{\mu})$ consists
of all indices in a class of $A_{\mu\mu}^{\times l}$. As $x$ can be
any extremal eigenvector of $A^{\times k}$ with $\supp x\cap
N_{\mu}\neq\emptyset$, \eqref{e:eqsupps1} shows that each class of
$A_{\mu\mu}^{\times k}$ (corresponding to $x$) splits into several
classes of $A_{\mu\mu}^{\times l}$ (corresponding to $y^i$). By
Corollary~\ref{c:div-boolean} this is only possible when
gcd$(k,\sigma)$ divides gcd$(l,\sigma)$, where $\sigma$ is the
cyclicity of the spectral class $\mu$.

{\bf In max algebra,} since $\rho=1$, assume without loss of
generality that $A$ is strictly visualized. In this case $A$ and $x$
have all coordinates not exceeding $1$. Recall that $x^{[1]}$ is the
Boolean vector defined by $x^{[1]}_i=1$ $\Leftrightarrow$ $x_i=1$.
Vector $x$ corresponds to a unique critical component $\Tilde{\mu}$
of $\crit(A)$ with the node set $N_{\maxmu}$. Then instead
of~\eqref{e:eqsupps1} we obtain
\begin{equation}
\label{e:eqsupps2}
x^{[1]}=\bigoplus_i y^{i[1]}\quad \Rightarrow\quad \supp(x^{[1]}_{\maxmu})=\bigcup_i \supp(y^{i[1]}_{\maxmu}),
\end{equation}
where $\supp(x^{[1]})=\supp(x^{[1]}_{\maxmu})$ by
Proposition~\ref{p:xmuvis}(ii) and Theorem~\ref{t:samecritical}(i),
and hence also $\supp(y^{i[1]})=\supp(y^{i[1]}_{\maxmu})$. If
$\supp(y^{i[1]}_{\maxmu})$ is non-empty then also
$\supp(y^i_{N_{\mu}})$ is non-empty so that $y^i$ is associated with
the
eigenvalue $1$. 
\if{Lemma~\ref{l:sameperron} implies that
$\supp(y^{i[1]}_{N_{\maxmu}})$ comprises the set of indices in
several classes of $(A^{[1]}_{N_{\maxmu}N_{\maxmu}})^{\otimes l}$
(as it can be argued, in only one class if $y^{i}$ is extremal).
}\fi As $y^i$ is extremal, Proposition~\ref{p:xmuvis}(ii) implies
that $\supp(y^{i[1]}_{\maxmu})$ consists of all indices in a class
of $(A^{[1]}_{\maxmu\maxmu})^{\otimes l}$. As $x$ can be any
extremal eigenvector of $A^{\otimes k}$ with $\supp (x^{[1]})\cap
N_{\maxmu}\neq\emptyset$, \eqref{e:eqsupps2} shows that each class
of $(A^{[1]}_{\maxmu\maxmu})^{\otimes k}$  splits into several
classes of $(A^{[1]}_{\maxmu\maxmu})^{\otimes l}$. By
Corollary~\ref{c:div-boolean} this is only possible when
gcd$(k,\sigma)$ divides gcd$(l,\sigma)$, where $\sigma$ is the
cyclicity of the critical component $\Tilde{\mu}$.
\end{proof}

Let us also formulate the following version restricted to some $\rho\in\Lambda(A)$.

\begin{theorem}
\label{t:girls-minor} Let $A\in\Rpnn$, and let $\sigma$ be either
the cyclicity of a spectral class {\bf (nonnegative algebra)} or the
cyclicity of a critical component {\bf (max algebra)} associated
with some $\rho\in\Lambda(A)$. The following are equivalent for all
positive $k,l$:
\begin{itemize}
\item[{\rm (i)}] gcd$(k,\sigma)$ divides gcd$(l,\sigma)$ for all cyclicities $\sigma$;
\item[{\rm (ii)}] gcd$(k,\sigma_{\rho})$ divides gcd$(l,\sigma_{\rho})$;
\item[{\rm (iii)}] $V(A^k,\rho^k)\subseteq V(A^l,\rho^l)$.
\end{itemize}
\end{theorem}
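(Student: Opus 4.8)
The plan is to prove the cycle (i)$\Rightarrow$(ii)$\Rightarrow$(iii)$\Rightarrow$(i), essentially by extracting from the proof of Theorem~\ref{t:girls-major} the part that concerns the single eigenvalue $\rho$. For (i)$\Rightarrow$(ii), recall that $\sigma_{\rho}$ is by definition the lcm of the cyclicities $\sigma$ of the spectral classes (resp. critical components) of $A$ associated with $\rho$; working one prime $p$ at a time and choosing a $\sigma$ of maximal $p$-adic valuation, the hypothesis $\gcd(k,\sigma)\mid\gcd(l,\sigma)$ for that particular $\sigma$ immediately gives $\gcd(k,\sigma_{\rho})\mid\gcd(l,\sigma_{\rho})$. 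For (ii)$\Rightarrow$(iii), Lemma~\ref{l:gcd-if}(i) gives $V(A^k,\rho^k)=V(A^{\gcd(\sigma_{\rho},k)},\rho^{\gcd(\sigma_{\rho},k)})$ and the analogous identity for $l$, so the divisibility in (ii) together with~\eqref{e:inclusion-eig} yields the inclusion in (iii). Both of these directions are routine.

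The substance is (iii)$\Rightarrow$(i), and here I would follow the argument given for (v)$\Rightarrow$(i) in the proof of Theorem~\ref{t:girls-major}, which in the present setting is slightly simpler because every extremal appearing on the right-hand side already lies in $V(A^l,\rho^l)$, so no extremals need to be discarded. Fix a spectral class $\mu$ (resp. critical component $\maxmu$) of $A$ with Perron root $\rho$ and cyclicity $\sigma$, and let $J$ be the set of indices of $A$ accessing $\mu$ (in max algebra, $\mu$ being the spectral class containing $\maxmu$). Passing to $A_{JJ}$ and normalizing --- and, in max algebra, to a strictly visualized representative via Theorem~\ref{t:strictvis} --- one may assume $\rho=1$, that $\mu$ (resp. $\maxmu$) is the only final class, and that all entries of $A$ are $\le 1$. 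Take any extremal $x$ of $V(A^k,1)$ with $\supp(x)\cap N_{\mu}\ne\emptyset$ (resp. $\supp(x^{[1]})\cap N_{\maxmu}\ne\emptyset$); by (iii) and the fact that $V(A^l,1)$ is generated by its extremals, write $x=\sum_i y^i$ with each $y^i$ an extremal of $V(A^l,1)$. In nonnegative algebra, restricting to $N_{\mu}$ gives $\supp(x_{\mu})=\bigcup_i\supp(y^i_{\mu})$, and Frobenius-Victory (Theorem~\ref{t:FVnonneg}(i)) together with Theorem~\ref{t:samespectrum} identifies $\supp(x_{\mu})$ with the node set of a class of $A_{\mu\mu}^{\times k}$ and each nonempty $\supp(y^i_{\mu})$ with the node set of a class of $A_{\mu\mu}^{\times l}$; in max algebra, booleanizing the equality and invoking Proposition~\ref{p:xmuvis}(ii) and Theorem~\ref{t:samecritical} yields the parallel statement for $(A^{[1]}_{\maxmu\maxmu})^{\otimes k}$ and $(A^{[1]}_{\maxmu\maxmu})^{\otimes l}$. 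Letting $x$ run over all such extremals shows that every class of the $k$-th power splits into classes of the $l$-th power, so Corollary~\ref{c:div-boolean} gives $\gcd(k,\sigma)\mid\gcd(l,\sigma)$; since $\mu$ (resp. $\maxmu$) was arbitrary among the spectral classes (resp. critical components) associated with $\rho$, this is exactly (i).

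The main obstacle is this last implication, and within it the bookkeeping in the max-algebra case: one must pass to a strictly visualized representative, replace the extremals by their booleanizations, and use Theorem~\ref{t:samecritical} to match the supports of the booleanized extremals of $V(A^r,1)$ with the cyclic classes of the critical components of $A^{\otimes r}$. Once this dictionary is in place, Corollary~\ref{c:div-boolean} carries out all the number-theoretic work, and the argument from the proof of Theorem~\ref{t:girls-major} can be repeated essentially verbatim with $\rho$ held fixed throughout.
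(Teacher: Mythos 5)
Your proposal follows the paper's own proof essentially verbatim: (i)$\Rightarrow$(ii) by elementary number theory, (ii)$\Rightarrow$(iii) via Lemma~\ref{l:gcd-if}(i) and~\eqref{e:inclusion-eig}, and (iii)$\Rightarrow$(i) by re-running the argument of Theorem~\ref{t:girls-major} (v)$\Rightarrow$(i) with the simplification that all extremals $y^i$ are associated with the same $\rho$. The reasoning is correct and no gaps are present.
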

\begin{proof}
(i)$\Rightarrow$(ii) follows from the elementary number theory, and
(ii)$\Rightarrow$(iii) follows from~\eqref{e:inclusion-eig} and
Lemma~\ref{l:gcd-if}(i). The proof of (iii)$\Rightarrow$(i) follows
the lines of the proof of Theorem~\ref{t:girls-major}
(v)$\Rightarrow$ (i), with a slight simplification that
$\Tilde{\rho}=\rho$ and further, $x$ and all $y^i$ in $x=\sum_i y^i$
are associated with the same eigenvalue.
\end{proof}

We are now ready to deduce Main Theorem~\ref{t:periodicity}

\begin{proof}[Proof of Main Theorem~\ref{t:periodicity}]

We prove the first part. The inclusion $V(A^k,\rho^k)\subseteq
V(A^{\sigma},\rho^{\sigma})$ was proved in Theorem~\ref{t:girls}
(i), and we are left to show that $\sigma_{\rho}$ is the least such
$p$ that $V(A^{k+p},\rho^{k+p})=V(A^k,\rho^k)$ for all $k\geq 1$.
But taking $k=\sigma_{\rho}$ and using Theorem~\ref{t:girls-minor}
(ii)$\Leftrightarrow$(iii), we obtain
gcd$(\sigma_{\rho}+p,\sigma_{\rho})=\sigma_{\rho}$, implying that
$\sigma_{\rho}$ divides $\sigma_{\rho}+p$, so $\sigma_{\rho}$
divides $p$. Since Theorem~\ref{t:girls} (i) also shows that
$V(A^{k+\sigma_{\rho}},\rho^{k+\sigma_{\rho}})=V(A^k,\rho^k)$ for
all $k\geq 1$, the result follows.

The second part can be proved similarly, using
Theorem~\ref{t:girls}(ii) and Theorem~\ref{t:girls-major}
(iii)$\Leftrightarrow$(v).
\end{proof}

\section{Examples}
\label{s:examples}
We consider two examples of reducible nonnegative matrices, examining their
core in max algebra and in nonnegative linear algebra.

\noindent {\em Example 1.} Take
\begin{equation}
A=
\begin{pmatrix}
 0.1206   &      0   &      0   &      0   &      0\\
 0.5895  &  0.2904 &   1  &  0.8797  &  0.4253\\
 0.2262   & 0.6171  &  0.3439  &  1   & 0.3127\\
 0.3846  &  0.2653  &  0.5841  &  0.2607   & 1\\
 0.5830  &  1   & 0.1078  &  0.5944 &   0.1788
\end{pmatrix}\,\enspace .
\end{equation}
$A$ has two classes with node sets $\{1\}$ and $\{2,3,4,5\}$.
Both in max algebra and in nonnegative linear algebra, the only spectral class arises from $M:=\{2,3,4,5\}$. The max-algebraic
Perron root of this class is $\rhomax(A)=1$, and the critical graph
consists of just one cycle $2\to 3\to 4\to 5\to 2$.

The eigencones $\vmax(A,1)$, $\vmax(A^{\otimes 2},1)$, $\vmax(A^{\otimes 3},1)$ and
$\vmax(A^{\otimes 4},1)$ are generated by the last four columns of
the Kleene stars $A^*$, $(A^{\otimes 2})^*$,
$(A^{\otimes 3})^*$, $(A^{\otimes 4})^*$. Namely,
\begin{equation*}
\begin{split}
\vmax(A,1)&=\vmax(A^{\otimes 3},1)=\spanmax\{(0\ 1\ 1\ 1\ 1)\},\\
\vmax(A^{\otimes 2},1)=\spanmax &\{(0,1,0.8797,1,0.8797),\ (0,0.8797,1,0.8797,1)\},\\
\vmax(A^{\otimes 4},1)=\spanmax &\{(0,1,0.6807,0.7738,0.8797),\ (0,0.8797,1,0.6807,0.7738),\\
 &(0,0.7738,0.8797,1,0.6807),\ (0,0.6807,0.7738,0.8797,1)\}
\end{split}
\end{equation*}

By Main Theorem~\ref{t:core}, $\coremax(A)$ is equal to
$\vmax(A^{\otimes 4},1)$. Computing the max-algebraic powers of $A$
we see that the sequence of submatrices $A^{\otimes t}_{MM}$ becomes
periodic after $t=10$, with period $4$. In particular,
\begin{equation}
A^{\otimes 10}=
\begin{pmatrix}
 \alpha    &    0    &     0   &      0  &       0\\
    0.4511  &  0.7738 &   0.6807 &   1  &  0.8797\\
    0.5128  &  0.8797  &  0.7738  &  0.6807  &  1\\
    0.5830  &  1  &  0.8797  &  0.7738  &  0.6807\\
    0.5895  &  0.6807  &  1  &  0.8797  &  0.7738
\end{pmatrix},
\end{equation}
where $0<\alpha<0.0001$. Observe that the last four columns are
precisely the ones that generate $\vmax(A^{\otimes 4},1)$. Moreover, if
$\alpha$ was $0$ then the first column would be the following max-combination of the last four columns:
$$
a^{\otimes 10}_{41}A^{\otimes 10}_{\cdot 2}\oplus a^{\otimes 10}_{51}A^{\otimes 10}_{\cdot 3}\oplus
a^{\otimes 10}_{21}A^{\otimes 10}_{\cdot 4}\oplus a^{\otimes 10}_{31}A^{\otimes 10}_{\cdot 5}.
$$
On the one hand, the first column of $A^{\otimes t}$
cannot be a max-combination of the last four columns for any $t>0$ since
$a_{11}^{\otimes t}>0$. On the other hand,
$a_{11}^{\otimes t}\to 0$ as $t\to\infty$ ensuring
that the first column belongs to the core ``in the limit''.

Figure~\ref{f:petersfigure1} gives a symbolic illustration of what is going on in this example.

\begin{figure}
\centering
\vskip -1cm
\includegraphics[width=0.8\linewidth]{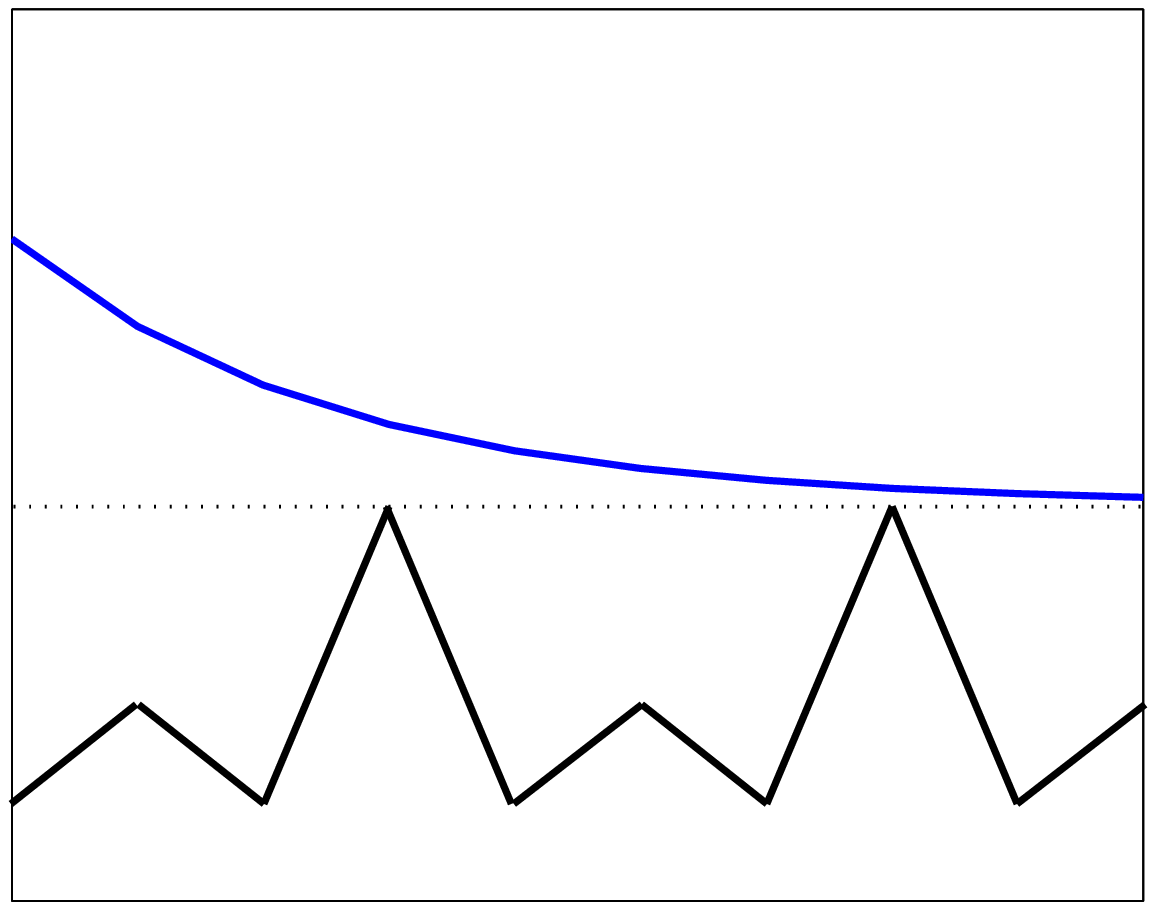}
\vskip -1cm
\caption{The spans of matrix powers (upper curve) and the periodic sequence of their eigencones (lower graph)
in Example 1 (max algebra).}
\label{f:petersfigure1}
\end{figure}

\if{
\begin{figure}
\centering
\includegraphics[width=0.8\linewidth]{coreandcones}
\caption{The spans of matrix powers (upper curve) and the periodic sequence of their eigencones
(lowergraph)
in Example 1 (max algebra).}
\label{f:petersfigure1}
\end{figure}
}\fi

In {\bf nonnegative algebra}, the block $A_{MM}$ with $M=\{2,3,4,5\}$ is also the only spectral block .
Its Perron root is approximately $\rhonon(A)=2.2101$, and the corresponding eigencone is
$$
\vnon(A,\rhonon(A))=\spannon\{(0,\ 0.5750,\ 0.5107,\ 0.4593,\ 0.4445)\}.
$$
Taking the usual powers of $(A/\rhonon(A))$ we see that
$$
\left(A/\rhonon(A)\right)^{\times 12}=
\begin{pmatrix}
 \alpha      &   0    &     0      &   0     &    0\\
    0.2457   & 0.2752  &  0.2711  &  0.3453   & 0.2693\\
    0.2182  &  0.2444  &  0.2408   & 0.3067  &  0.2392\\
    0.1963   & 0.2198  &  0.2165  &  0.2759  &  0.2151\\
    0.1899  &  0.2127 &   0.2096  &  0.2670  &  0.2082
\end{pmatrix},
$$
where $0<\alpha<0.0001$, and that the first four digits of all entries in all higher powers are the same.
It can be verified that the submatrix $(A/\rhonon(A))^{\times 12}_{MM}$ is, approximately, the outer product of
the Perron eigenvector with itself, while the first column is also almost proportional to it.

\noindent {\em Example 2.}  Take
\begin{equation}
A=
\begin{pmatrix}
      0  &  1  &   0   &   0\\
    1  & 0   &    0   &   0\\
    0.6718 &  0.2240 &   0.5805  &  0.1868\\
    0.6951 &   0.6678  &  0.4753  &  0.3735
\end{pmatrix}\,\enspace .
\end{equation}

This matrix has two classes $\mu$ and $\nu$ with index sets $\{1,2\}$ and $\{3,4\}$,
and both classes are spectral, in both algebras. In max algebra
$\rhomax_{\mu}=1$ and $\rhomax_{\nu}=a_{33}<1$.
The eigencones of matrix powers associated with
$\rhomax_{\mu}=1$ are
\begin{equation*}
\begin{split}
\vmax(A,1)&=\spanmax\{(1,1,0.6718,0.6951)\},\\
\vmax(A^{\otimes 2},1)&=
\spanmax\{(1,0,0.3900,0.6678),\ (0,1,0.6718,0.6951)\},\\
\end{split}
\end{equation*}
and the eigencone associated with $\rhomax_{\nu}$ is generated by the
third column of the matrix:
$$
\vmax(A,\rhomax_{\nu})=\spanmax\{(0,\ 0,\ 0.5805,\ 0.4753)\}.
$$

By Main Theorem~\ref{t:core}, $\coremax(A)$ is equal to the
(max-algebraic) sum of $\vmax(A^{\otimes 2},1)$ and
$\vmax(A,\rhomax_{\nu})$. To this end, observe that already in the
second max-algebraic power
\begin{equation}
A^{\otimes 2}=
\begin{pmatrix}
1     &    0     &    0     &    0\\
         0  &  1     &    0    &     0\\
    0.3900  &  0.6718  &  0.3370  &  0.1084\\
    0.6678  &  0.6951  &  0.2759  &  0.1395
\end{pmatrix}
\end{equation}
the first two columns are the generators of $\vmax(A^{\otimes 2},1)$.
However, the last column is still not proportional to the third one
which shows that $\spanmax(A^{\otimes 2})\neq \coremax(A)$. However, it can be checked
that this happens in $\spanmax(A^{\otimes 4})$, with the first two columns still equal to the
generators of  $\vmax(A^{\otimes 2},1)$, which shows that $\spanmax(A^{\otimes 4})$
is the sum of above mentioned max cones, and hence
$\spanmax(A^{\otimes 4})=\spanmax(A^{\otimes 5})=\ldots=\coremax(A)$. Hence we see that $A$ is column
periodic ($\goodclass_5$) and the core finitely
stabilizes. See Figure~\ref{f:petersfigure2} for a symbolic illustration.

\begin{figure}
\centering
\vskip -1cm
\includegraphics[width=0.8\linewidth]{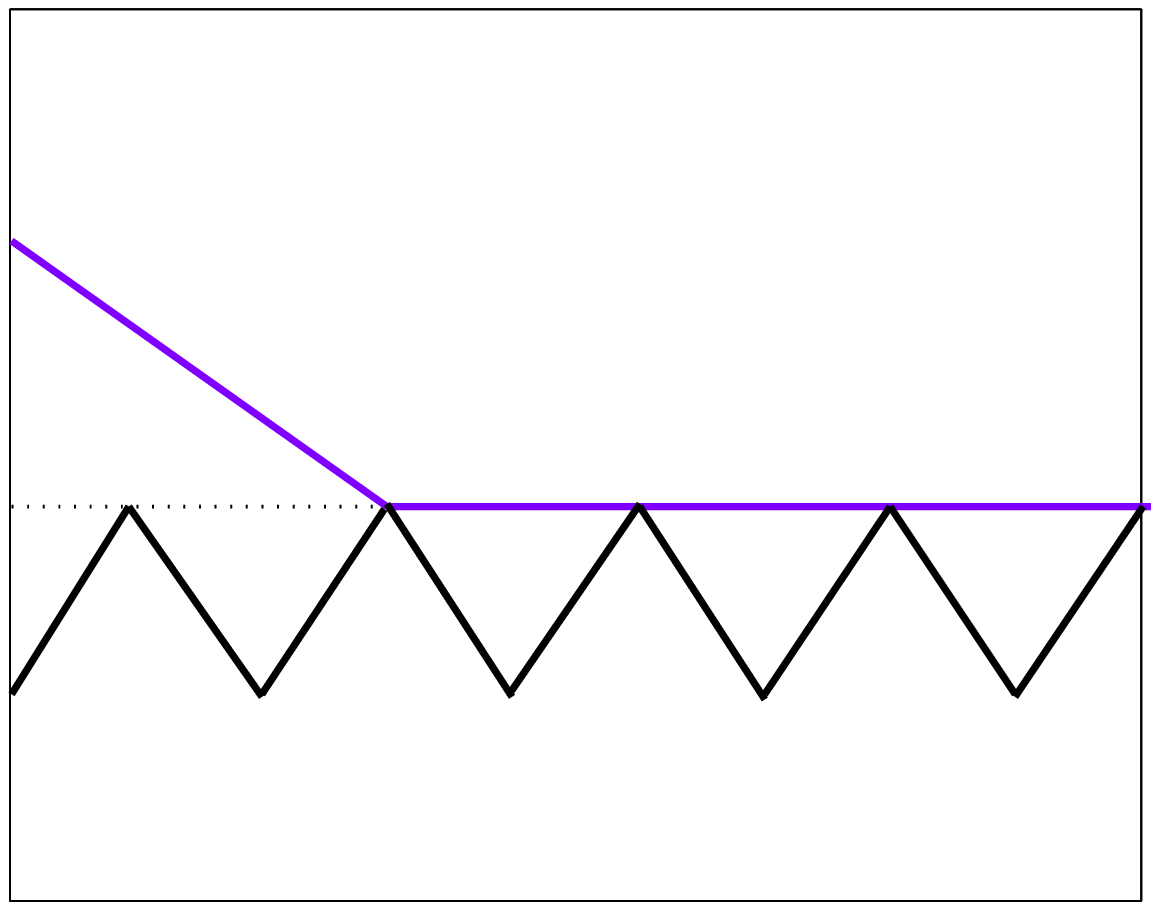}
\vskip -1cm
\caption{The spans of matrix powers (upper graph) and the periodic sequence of their eigencones (lower graph)
in Example 2 (max algebra)}
\label{f:petersfigure2}
\end{figure}

\if{
\begin{figure}
\centering
\includegraphics[width=0.7\linewidth]{coreandcones-other}
\caption{The spans of matrix powers (upper curve) and the periodic sequence of their
eigencones (lower graph)
in Example 2 (max algebra)}
\label{f:petersfigure2}
\end{figure}
}\fi

In {\bf nonnegative algebra}, $\rhonon_{\mu}=1$ and
$\rhonon_{\nu}=0.7924$. Computing the eigenvectors of $A$ and
$A^{\times 2}$ yields
\begin{equation*}
\begin{split}
\vnon(A,1)&=\spannon\{(0.1326,0.1326,0.6218,0.7604)\},\\
\vnon(A^{\times 2},1)&=
\spannon\{(0.2646,0,0.5815,0.7693),\ (0,0.2566,0.6391,0.7251)\},\\
\end{split}
\end{equation*}
and
$$
\vnon(A,\rhonon_{\nu})=\spannon\{(0,\ 0,\ 0.6612,\ 0.7502)\}.
$$

Here $\corenon(A)$ is equal to the ordinary (Minkowski) sum of
$\vnon(A^{\times 2},1)$ and $\vnon(A,\rhonon_{\nu})$. To this end,
it can be observed that, within the first $4$ digits, the first two
columns of $A^{\times t}$ become approximately periodic after
$t=50$, and the columns of powers of the normalized submatrix
$A_{\nu\nu}/\rhonon_{\nu}$ approximately stabilize after $t=7$. Of
course, there is no finite stabilization of the core in this case.
However, the structure of the nonnegative core is similar to the
max-algebraic counterpart described above.


\section*{Acknowledgement}

The authors are grateful to the anonymous referee for careful
reading and a great number of useful suggestions which helped to
improve the presentation and to eliminate some flaws and errors.

\end{document}